\documentclass[a4paper]{amsart}

\usepackage[margin=2cm]{geometry}
\usepackage{amsmath,amsthm,amssymb,amstext}
\usepackage{hyperref}
\usepackage[alphabetic,backrefs]{amsrefs}

\newtheorem{theorem}{Theorem}[section]
\newtheorem{proposition}[theorem]{Proposition}
\newtheorem{lemma}[theorem]{Lemma}
\newtheorem{corollary}[theorem]{Corollary}

\newtheorem{maintheorem}{Theorem}
 
\newtheorem{mainproposition}[maintheorem]{Proposition}

\theoremstyle{remark}

\theoremstyle{definition}
\newtheorem{definition}[theorem]{Definition}

\hypersetup{
 colorlinks=true
}

\begin{document}
\title{
 An asymptotic $\Gamma$-BMT inequality
}
\author{
 Nick Rekuski
}
\address{
 The University of Liverpool, Mathematical Sciences Building, Liverpool L69 7ZL, United Kingdom
}
\email{
 \url{rekuski@liverpool.ac.uk}
}
\date{
 October 22, 2026
}
\begin{abstract}
 We refine a bound of Cheng--Feyzbakhsh to prove an asymptotic $\Gamma$-Bayer--Macr{\`i}--Toda ($\Gamma$-BMT) inequality on every smooth polarized complex threefold.
 The proof relies on a reduction to small tilt stability parameters.
 Using this inequality we verify Feyzbakhsh--Thomas's condition $(\mathbf{wBG})$ whenever $K_{X}$ is numerically proportional to the polarization.
 For polarized Calabi--Yau threefolds, this yields a reduction of positive rank generalized Donaldson--Thomas (DT) invariants to those counting sheaves pure in codimension one.
 We further express these invariants in terms of rank one generalized DT invariants.
 Thus, the rank reductions of Feyzbakhsh--Thomas hold without assuming the BMT conjecture.
\end{abstract}

\maketitle

\section{Introduction}
 Bogomolov--Gieseker (BG) type inequalities for tilt semistable objects are central to understanding Bridgeland stability conditions and curve-counting theories for threefolds.
 The Bayer--Macr{\`i}--Toda (BMT) conjecture~\cite{bayer2014}*{Conjecture 1.3.1} proposes such an inequality involving the third Chern character.
 Although the original inequality fails in general~\cite{schmidt2017}, several modifications retain the main applications.
 For example, the hypothesis $(\mathbf{wBG})$~\cite{feyzbakhsh2024} is sufficient for the reduction of positive rank Donaldson--Thomas invariants to rank zero invariants.
 Likewise, the $\Gamma$-BMT inequality of \cite{bernardara2017} introduces a correction term $\Gamma$ and implies the existence of families of Bridgeland stability conditions on threefolds.
 Earlier results are surveyed in \cite{bayer2023}, see also \cite{feyzbakhsh2025}.

 Our main theorem establishes an asymptotic form of the $\Gamma$-BMT conjecture.
 For tilt stability, we use the parameterization of \cite{feyzbakhsh2023}.

 \begin{maintheorem}[Theorem \ref{theorem:asymptoticGammaBMT}]
  \label{maintheorem}
  Let $(X,H)$ be a smooth polarized complex threefold.
  Set $D=\tfrac{1}{2}K_{X}$.
  There exist $t_{0}>0$ and $\Gamma\in N^{2}(X)_{\mathbb{Q}}$ such that every $\nu_{b,w}^{D}$-semistable $E\in\operatorname{Coh}_{b}^{D}(X)$ with $w-\tfrac{1}{2}b^{2}\geq t_{0}$ and $\nu_{b,w}^{D}(E)=b$ satisfies
  \[
   \operatorname{ch}_{3}^{D+b H}(E)\leq
  \frac{w-\frac{1}{2}b^{2}}{3}H^{2}\cdot\operatorname{ch}_{1}^{D+bH}(E)+\Gamma\cdot\operatorname{ch}_{1}^{D+bH}(E).
   \]
 \end{maintheorem}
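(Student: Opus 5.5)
The plan is to reduce to small tilt parameters and then apply a Cheng--Feyzbakhsh type bound. Write $t=w-\tfrac12 b^{2}$. Objects with $\nu^{D}_{b,w}(E)=b$ are exactly those with $\operatorname{ch}_2^{D+bH}(E)\cdot H = t\,H^{2}\cdot\operatorname{ch}_1^{D+bH}(E)$ up to the normalisation of \cite{feyzbakhsh2023}. In that parameterisation, moving along the vertical wall through $(b,w)$ with $b$ fixed keeps the condition $\nu=b$. This holds because the walls for the value $\nu=b$ all pass through the point $(b,\tfrac12 b^{2})$ on the boundary parabola. It therefore suffices to control $E$ along the ray $\{(b,w'): w'\ge w\}$. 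For $w'\gg0$ this is the large volume limit, where $E$ is a Gieseker-type (twisted) semistable sheaf or a shifted object, up to torsion in low dimension.

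The first step is to tensor by a line bundle, or use the $D+bH$ twist, to normalise $b$. Since $\operatorname{ch}^{D+bH}$ depends on $b$ continuously and the inequality is linear in $\operatorname{ch}_1^{D+bH}$, I would reduce to a statement uniform in $b$ by taking $\Gamma$ to absorb the bounded discrepancies. The choice $D=\tfrac12K_X$ makes Serre duality $E\mapsto E^{\vee}[1]$ (derived dual twisted by $K_X$) act as $b\mapsto -b$ preserving $\nu=b$. This lets me assume a sign condition on $\operatorname{ch}_1^{D+bH}\cdot H^2$, or handle the two signs symmetrically.

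The second step is the reduction to small parameters. I would move $E$ down the ray from $(b,w)$ toward $t$ close to $t_0$, tracking walls. If $E$ stays semistable down to $t_0$, I would apply the refined Cheng--Feyzbakhsh bound there: a bound $\operatorname{ch}_3^{D+bH}\le c\,H^2\operatorname{ch}_1+\Gamma\operatorname{ch}_1$ valid at some fixed small $t$. That bound is derived from BG and the classical estimate on $\operatorname{ch}_3$ of slope-semistable sheaves via $h^0$ bounds. If $E$ destabilises at a wall along the ray, the Jordan--Hölder factors $E_i$ all satisfy $\nu=b$ at the wall point, which has smaller $t$. By induction on $H^2\cdot\operatorname{ch}_1^{D+bH}$, a positive discrete quantity that is additive and strictly smaller for each factor, the factors satisfy the inequality with their smaller $t_i\le t$. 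Since the right-hand side increases with $t$ when $\operatorname{ch}_1\cdot H^2\ge0$ and everything is additive, summing gives the bound for $E$ at $t$. The coefficient $\tfrac13$ is exactly what makes this monotonicity compatible along $\nu=b$ walls, as in BMT.

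The hard step is the base case: proving that for semistable $E$ at the minimal $t\approx t_0$ the inequality holds with the precise slope $\tfrac{t}{3}$ and a uniform $\Gamma$. Here I would use that $t_0$ is fixed but large enough that BG-type walls are controlled, and derive $\operatorname{ch}_3$ bounds from Riemann--Roch plus vanishing $\hom(\mathcal O(-nH),E)$ and $\hom(E,\mathcal O(-nH)[2])$ for suitable $n$, as in Cheng--Feyzbakhsh. The error terms, which are linear in $\operatorname{ch}_1$ with coefficients involving $\operatorname{td}_2(X)$ and $K_X$, get absorbed into $\Gamma$. I expect the main difficulty to be making the Cheng--Feyzbakhsh constants independent of $\operatorname{ch}_0$ and $b$, so that the error is genuinely of the form $\Gamma\cdot\operatorname{ch}_1$.
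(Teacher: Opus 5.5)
Your overall architecture is the right one: push the parameter down to $t_0$ by wall-crossing, induct on the factors, and use Cheng--Feyzbakhsh at the bottom. But there is a genuine gap at the core, namely the path you move along.

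The hypothesis $\nu^D_{b,w}(E)=b$ is equivalent to $H\cdot\operatorname{ch}_2^{D+bH}(E)=(w-\tfrac12 b^2)H^3\operatorname{ch}_0(E)$. For fixed $b$ and $\operatorname{ch}_0(E)\neq 0$ this pins down $w$, so moving vertically destroys it. Indeed, the numerical line through $(b,w)$ passes through $\Pi(E)$ with slope $\nu^D_{b,w}(E)$. So $\nu=b$ means it is parallel to the tangent of the parabola at $b$, i.e.\ $b=\tfrac12(b_1(\ell)+b_2(\ell))$; walls do not pass through $(b,\tfrac12 b^2)$. Consequently the factors at a wall on your vertical ray do not have $\nu=b$ there, and neither your induction hypothesis nor your base case applies to them.

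You must instead follow the curve $t\mapsto (b(t),t+\tfrac12 b(t)^2)$ cut out by $H\cdot\operatorname{ch}_2^{D+b(t)H}(E)=tH^3\operatorname{ch}_0(E)$, which is vertical only in rank zero. Along it $\operatorname{ch}_3^{D+b(t)H}(E)$ varies too, so ``the right-hand side increases with $t$'' is not the relevant statement. What is needed is that
\[
\nabla(t)=\operatorname{ch}_3^{D+b(t)H}(E)-\tfrac{t}{3}H^2\cdot\operatorname{ch}_1^{D+b(t)H}(E)-\Gamma\cdot\operatorname{ch}_1^{D+b(t)H}(E)
\]
is nonincreasing. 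Using $b'(t)=-H^3\operatorname{ch}_0(E)/H^2\cdot\operatorname{ch}_1^{D+b(t)H}(E)$ and $\bigl(H^2\cdot\operatorname{ch}_1^{D+b(t)H}(E)\bigr)^2=\Delta^D(E)+2t\bigl(H^3\operatorname{ch}_0(E)\bigr)^2$, one gets
\[
\nabla'(t)=-\frac{\Delta^D(E)+3\frac{H\cdot\Gamma}{H^3}\bigl(H^3\operatorname{ch}_0(E)\bigr)^2}{3H^2\cdot\operatorname{ch}_1^{D+b(t)H}(E)}.
\]
The $t$-terms cancel exactly because of the coefficient $\tfrac13$. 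The result is $\le 0$ only because $\Delta^D(E)\ge0$ and $H\cdot\Gamma\ge 0$; this is where Bogomolov--Gieseker and a constraint on $\Gamma$ enter.

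Three further repairs are needed.

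\textbf{Induction invariant.} $H^2\cdot\operatorname{ch}_1^{D+bH}$ is not discrete for irrational $b$, and it depends on the point: along the curve it equals $\sqrt{\Delta^D(E)+2t(H^3\operatorname{ch}_0(E))^2}$. So it cannot support a minimal-counterexample argument. Induct on $\Delta^D$ instead, which lies in $\tfrac14\mathbb{Z}_{\ge0}$ and strictly drops for the factors at a wall (Lemma \ref{lemma:wallProperties}(3)). Take a counterexample at $t_1$ with minimal $\Delta^D$, and let $t_*$ be the minimum of the closed set of $t\in[t_0,t_1]$ at which $E$ is semistable. If $t_*>t_0$, the factors at $(b(t_*),w(t_*))$ satisfy the bound there by minimality, and additivity gives $\nabla(t_*)<0$. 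Monotonicity then gives $\nabla(t_1)\le\nabla(t_*)<0$, a contradiction.

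\textbf{Base case.} For $t_*=t_0$ the precise slope $\tfrac t3$ is irrelevant, since $t_0$ is fixed. Cheng--Feyzbakhsh's bound (Lemma \ref{lemma:chengFeyzbakhshInequality}) has leading coefficient $3t$. It is exactly the required inequality at $t_0$ once $\Gamma$ contains $\tfrac{8t_0}{3}H^2$, because $\tfrac{t_0}{3}+\tfrac{8t_0}{3}=3t_0$. For that $t_0$, this choice also makes $H\cdot\Gamma>0$. That bound must be imported. A bound at fixed $t_0$ that is linear in $\operatorname{ch}_1^{D+bH}$, uniformly in rank and $b$, is itself a weak BMT-type inequality. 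It does not follow from Bogomolov--Gieseker plus Riemann--Roch and vanishing of Hom groups.

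\textbf{Serre duality.} It is unnecessary. Cheng--Feyzbakhsh use $D=\tfrac12\operatorname{ch}_1(\mathcal{N})=\tfrac12K_X+\tfrac12(N+1)kH$, and changing $D$ by a multiple of $H$ is just a translation of $b$.
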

 
 Both $t_{0}$ and $\Gamma$ can be chosen independently of $(b,w)$ and $E$.
 Explicit choices of $t_{0}$ and $\Gamma$ are given in Lemma \ref{lemma:mainTheoremText} and Theorem \ref{theorem:asymptoticGammaBMT}.
 
 Our proof of Theorem \ref{maintheorem} adapts the technique of \cite{bayer2016}*{Section 5} to a recent inequality of Cheng--Feyzbakhsh~\cite{cheng2026}.
 It improves the leading coefficient in Cheng--Feyzbakhsh's inequality to the same leading coefficient of the BMT conjecture at the cost of a fixed correction term.
 
 We give two applications of Theorem \ref{maintheorem}.
 First, we prove $(X,H)$ satisfies condition $(\mathbf{wBG})$ of \cite{feyzbakhsh2024}*{Section 3.1} whenever $K_{X}$ is numerically proportional to $H$.
 
 \begin{mainproposition}[Proposition \ref{proposition:threefoldSatisfiesWBG}]
  \label{mainproposition}
  If $(X,H)$ is a smooth polarized complex threefold satisfying $K_{X}\equiv\lambda H$ for some $\lambda\in\mathbb{Q}$ then $(X,H)$ satisfies $(\mathbf{wBG})$.
 \end{mainproposition}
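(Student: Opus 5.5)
We will deduce $(\mathbf{wBG})$ from Theorem~\ref{maintheorem}. Recall that condition $(\mathbf{wBG})$ of \cite{feyzbakhsh2024}*{Section 3.1} asks for a weak BMT-type bound on $\operatorname{ch}_3$ of tilt semistable objects in the region of large $w-\tfrac12 b^2$: there must be constants (depending only on $(X,H)$) such that every $\nu_{b,w}$-semistable object of slope $\nu_{b,w}(E)=b$ with $w-\tfrac12 b^2$ large satisfies $\operatorname{ch}_3^{bH}(E)\le \bigl(\tfrac{w-\frac12 b^2}{3}+C\bigr)\,H^2\operatorname{ch}_1^{bH}(E)$, possibly with an additional term controlled by $|\operatorname{ch}_0|$ or $\operatorname{ch}_2$. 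The first step is to recall this precise formulation and to match its parameterization of tilt stability with the twisted one $\nu^{D}_{b,w}$ with $D=\tfrac12K_X$ used in Theorem~\ref{maintheorem}.

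The key observation is that when $K_X\equiv\lambda H$ we have $D\equiv\tfrac{\lambda}{2}H$. Numerically, the twist by $D$ is then just a shift of the $b$-parameter: $\operatorname{ch}^{D+bH}=\operatorname{ch}^{(b+\lambda/2)H}$. Consequently $\operatorname{Coh}^D_b(X)=\operatorname{Coh}_{b+\lambda/2}(X)$, and $\nu^D_{b,w}$ coincides with the untwisted $\nu_{b',w'}$ for $b'=b+\lambda/2$ and a $w'$ for which $w'-\tfrac12 b'^2=w-\tfrac12 b^2$. I would verify this by writing out the central charge. The hyperbola condition $w-\tfrac12 b^2\ge t_0$ and the slope condition $\nu=b$ are translated in the same way.

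Next, I would handle $\Gamma$. Since only numerical classes enter, we may pair $\Gamma$ with $\operatorname{ch}_1^{b'H}(E)$. The difficulty is that $\Gamma\cdot\operatorname{ch}_1$ is not a priori a multiple of $H^2\cdot\operatorname{ch}_1$. To bound it, I would use the fact that $\operatorname{ch}_1^{b'H}(E)$ is controlled for tilt semistable objects: decompose $\operatorname{ch}_1=\frac{H^2\operatorname{ch}_1}{H^3}H+\alpha$ with $H^2\cdot\alpha=0$. By Hodge index, $\alpha^2\cdot H\le 0$. The Bogomolov--Gieseker inequality $\Delta_H\ge0$, together with its refined form involving $\alpha$, then bounds $|H\cdot\alpha|$ in terms of $\operatorname{ch}_0$ and $H^2\operatorname{ch}_1$ on the relevant locus. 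Alternatively, and more simply, I would choose $\Gamma$ to be a multiple of $H^2$ after enlarging it: replace $\Gamma$ by $cH^2$ with $c$ large, provided $\Gamma\cdot\operatorname{ch}_1\le cH^2\operatorname{ch}_1$. This holds if $cH^2-\Gamma$ is nef on the cone of classes $\operatorname{ch}_1^{b'H}$ of tilt semistable objects with $\nu=b$. Note that $H^2\operatorname{ch}_1^{b'H}\ge0$ holds in the heart.

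I expect this comparison of $\Gamma\cdot\operatorname{ch}_1$ with $H^2\cdot\operatorname{ch}_1$ to be the main obstacle. If Lemma~\ref{lemma:mainTheoremText} or Theorem~\ref{theorem:asymptoticGammaBMT} gives $\Gamma$ as a combination of $H^2$, $K_X\cdot H$ and $\operatorname{ch}_2$-type classes of $X$, then the proportionality $K_X\equiv\lambda H$ should make $\Gamma$ numerically proportional to $H^2$ up to $c_2(X)$. I would therefore first check the explicit $\Gamma$ in those results. Any residual term like $c_2(X)\cdot\operatorname{ch}_1$ would be absorbed using the Hodge index argument above, at the cost of a term in $|\operatorname{ch}_0|$, which $(\mathbf{wBG})$ permits. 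With this done, the inequality becomes exactly the $(\mathbf{wBG})$ form with constant $C=c$ and threshold $t_0$.
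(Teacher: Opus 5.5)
\textbf{The gap: you have proved the wrong statement.} Your translation step is essentially right. Since $D=\tfrac12K_X\equiv\tfrac{\lambda}{2}H$, the twist by $D$ is a shift of $b$ that preserves $w-\tfrac12b^2$ and the condition $\nu=b$. Absorbing $\Gamma\cdot\operatorname{ch}_1^{bH}(E)$ into $M\,H^2\cdot\operatorname{ch}_1^{bH}(E)$ then yields Corollary~\ref{bgInequalityProportionalH}.

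One caveat on that absorption: $H\cdot(\operatorname{ch}_1^2-2\operatorname{ch}_0\operatorname{ch}_2)\ge0$ is not available for tilt-semistable objects. What works is the weaker bound $H\cdot(\operatorname{ch}_1^{bH}(E))^2\ge -C_H(H^2\cdot\operatorname{ch}_1^{bH}(E))^2$ from \cite{bayer2016}*{Lemma 3.3 and Theorem 3.5}. Combined with compactness of $\{B: H^2\cdot B=1,\ H\cdot B^2\ge -C_H\}$ (Hodge index), it gives the bound with no $\operatorname{ch}_0$-term. Your ``nef on the cone'' alternative just restates what must be shown.

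The genuine gap is that you have misidentified the target. $(\mathbf{wBG})$ (Definition~\ref{definition:wBG}) is not a $\operatorname{ch}_3$-inequality. It asserts that for each $v$ with $\operatorname{ch}_0(v)\ge1$ and all $n\gg0$, no object of class $v_n=v-[\mathcal{O}_X(-nH)]$ is $\nu_{b,w}$-semistable at any point of the specific numerical line $\ell_{v,n}$. The endpoints of this line satisfy $b_1(\ell_{v,n})\in[\tfrac{\epsilon}{2}-n,\epsilon-n]$ and $b_2(\ell_{v,n})\in[\mu_0(v)-\epsilon,\mu_0(v)]$. Your closing claim that the inequality ``becomes exactly the $(\mathbf{wBG})$ form'' is therefore false. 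The corollary you reach is only an intermediate step, and the actual content of the proposition is missing.

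\textbf{What must be supplied.} Suppose such an $E$ exists.
By Lemma~\ref{lemma:factsAboutNumericalLines}(4), $E$ stays semistable along all of $\ell_{v,n}\cap U$. In particular it is semistable at the apex $(b_*,w_*)=(\tfrac{b_1+b_2}{2},\tfrac{b_1^2+b_2^2}{4})$, where $\nu_{b_*,w_*}(E)=b_*$ and $w_*-\tfrac12b_*^2=\tfrac18(b_2-b_1)^2\ge t_0$ once $n$ is large. Applying the corollary there and rewriting in terms of $b_1,b_2$ gives Lemma~\ref{lemma:numericalWallBound}.

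Now substitute $\operatorname{ch}(v_n)$. The $\mathcal{O}_X(-nH)$ part contributes $\tfrac{H^3}{6}n(n+b_1)(n+b_2)$ to the left-hand side. Since $n+b_1\ge\tfrac{\epsilon}{2}$ by \eqref{equation:lowerBoundLeftEndpoint} and $n+b_2\ge\tfrac n2$, this contribution is at least $\tfrac{H^3}{24}\epsilon n^2$. Every remaining term, including $M\bigl(H^2\cdot\operatorname{ch}_1(v_n)-b_*H^3\operatorname{ch}_0(v_n)\bigr)$, is $O(n)$ because of the endpoint bounds. This is a contradiction for $n\gg0$.

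This is also exactly where the sharp coefficient $\tfrac13$ matters. With the coefficient $3$ of Lemma~\ref{lemma:chengFeyzbakhshInequality}, the extra term $\tfrac{8}{3}(w_*-\tfrac12b_*^2)H^2\cdot\operatorname{ch}_1^{b_*H}(E)$ would be of order $n^3$ and swamp the $n^2$ margin. None of this numerical-line geometry or growth comparison appears in your proposal.
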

 
 In particular, every polarized Calabi--Yau threefold satisfies $(\mathbf{wBG})$.
 Consequently, \cite{feyzbakhsh2024}*{Theorem 1} holds unconditionally.
 Thus generalized Donaldson--Thomas (DT) invariants counting Gieseker semistable sheaves of positive rank are determined by those counting Gieseker semistable sheaves pure in codimension one.
 
 Our second application further reduces rank zero DT invariants to rank one DT invariants.
 Together with the first application, this establishes \cite{feyzbakhsh2023}*{Theorem 1} without assuming the BMT conjecture.
 In the following statement, $\mathsf{J}(v)\in\mathbb{Q}$ denotes the Joyce--Song generalized Donaldson--Thomas invariant counting $H$-Gieseker semistable sheaves of numerical class $v$~\cite{joyce2012}.
 In addition, we define a polarized Calabi--Yau threefold to be a polarized threefold satisfying $K_{X}\cong\mathcal{O}_{X}$ and $H^{1}(X,\mathcal{O}_{X})=0$.
 
 \begin{maintheorem}[Theorem \ref{theorem:rankOneReduction}]
  \label{mainApplication}
  Suppose $(X,H)$ is a polarized Calabi--Yau threefold.
  For all $v\in K_{\mathrm{num}}(X)$ with $\operatorname{ch}_{0}(v)\geq 0$ there exist numerical classes $\beta_{1},\beta_{2},\ldots,\beta_{m}\in K_{\mathrm{num}}(X)$ with $\operatorname{ch}_{0}(\beta_{i})=1$ and a polynomial $P_{v}\in\mathbb{Q}[t_{1},t_{2},\ldots,t_{m}]$ (depending on $v$, $(X,H)$, and the graded ring $H^{\bullet}(X,\mathbb{Q})$) such that
  \[
   \mathsf{J}(v)=P_{v}\left(\mathsf{J}(\beta_{1}),\mathsf{J}(\beta_{2}),\ldots,\mathsf{J}(\beta_{m})\right).
  \]
 \end{maintheorem}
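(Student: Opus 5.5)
The proof will combine the two reduction steps already advertised in the introduction, working within the Feyzbakhsh--Thomas framework. First, by Proposition \ref{mainproposition} applied with $\lambda=0$, a polarized Calabi--Yau threefold $(X,H)$ satisfies $(\mathbf{wBG})$. Hence \cite{feyzbakhsh2024}*{Theorem 1} applies unconditionally. For $\operatorname{ch}_{0}(v)>0$ it expresses $\mathsf{J}(v)$ as a universal polynomial, with coefficients depending only on $(X,H)$ and the cohomology ring, in invariants $\mathsf{J}(\alpha_{j})$ with $\operatorname{ch}_{0}(\alpha_{j})=0$ and $\operatorname{ch}_{1}(\alpha_{j})\neq 0$, that is, classes of sheaves pure in codimension one. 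It therefore suffices to treat $\operatorname{ch}_{0}(v)=0$. If $\operatorname{ch}_{1}(v)=0$ as well, I also fold this case into the rank zero step below, using the version of the wall-crossing in \cite{feyzbakhsh2023} that handles lower-dimensional sheaves via tensoring by $\mathcal{O}(nH)$ and the Joyce--Song pair formalism.

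Second, for rank zero classes I follow \cite{feyzbakhsh2023}*{Theorem 1}. Their argument writes $\mathsf{J}(v)$ with $\operatorname{ch}_{0}(v)=0$ in terms of rank one invariants, namely ideal sheaves twisted by line bundles, that is, the classes $\beta_{i}$ with $\operatorname{ch}_{0}(\beta_{i})=1$. It does so by wall-crossing for pairs $\mathcal{O}(-n)\to F$ in tilt stability, from large volume down to small $w$ near the Brill--Noether boundary. There, the only semistable objects of the relevant classes are controlled, and the wall-crossing formula of Joyce--Song/Joyce expresses the invariants polynomially.

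The one place \cite{feyzbakhsh2023} invokes the BMT conjecture is to bound $\operatorname{ch}_{3}$ of tilt-semistable objects. This bound does two things: it forces the set of walls and destabilizing classes to be finite, and it guarantees that at the final chamber the moduli is empty or consists of rank one objects. I will replace that input by Theorem \ref{maintheorem}, with $D=0$ since $K_X=0$. The constant $\Gamma\cdot\operatorname{ch}_{1}$ correction, and the restriction $w-\tfrac12 b^{2}\geq t_{0}$, only change the bounds by terms linear in $\operatorname{ch}_{1}$, which is fixed by $v$. This should still give finitely many walls and boundedness of classes $\beta_{i}$, after taking $n\gg0$ so that the relevant tilt parameters stay in the region $w-\tfrac12b^2\ge t_0$. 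The coefficients then depend only on $v$, $(X,H)$ and intersection numbers, as claimed. Composing the two polynomial expressions gives $P_{v}$.

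The main obstacle is exactly this substitution step. The argument of \cite{feyzbakhsh2023} may use BMT at small $w$ near $w=\tfrac12 b^2$, outside the range of Theorem \ref{maintheorem}. I would first rescale, twisting by $\mathcal{O}(nH)$ with $n$ large, so that all walls needed occur with $w-\tfrac12b^2\ge t_0$. Failing that, I would replace the BMT input by the Cheng--Feyzbakhsh bound \cite{cheng2026}, whose weaker leading coefficient suffices for the finiteness and emptiness statements there.
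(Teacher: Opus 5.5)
Your outline has the same architecture as the paper. You use $(\mathbf{wBG})$ with $\lambda=0$ and \cite{feyzbakhsh2024}*{Theorem 1} to reach rank zero, then run the argument of \cite{feyzbakhsh2023} with BMT replaced by Theorem~\ref{maintheorem}. That replacement is the whole content of the proof, and the justifications you give for it do not hold.

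\textbf{Your justifications for the substitution.} The correction is not ``linear in a $\operatorname{ch}_{1}$ fixed by $v$''. It is applied to $v_{n_{0}}=v-[\mathcal{O}_{X}(-n_{0}H)]$ and to its destabilizing factors, whose $H^{2}\cdot\operatorname{ch}_{1}$ is of order $n_{0}$. Also, $\Gamma\cdot\operatorname{ch}_{1}^{bH}$ must first be converted into $M\,H^{2}\cdot\operatorname{ch}_{1}^{bH}$ as in Corollary~\ref{bgInequalityProportionalH}, since those estimates only see $H$-degrees. The resulting errors in their Equation (40) are $O(n_{0}^{2})$. They are harmless only because the decisive $n_{0}^{3}$ terms of their Equations (41)--(42) are unchanged, and that relies on the sharp coefficient $\tfrac13$. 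For the same reason your fallback to \cite{cheng2026} fails: its coefficient $3$ alters exactly those $n_{0}^{3}$ terms, which is why Lemma~\ref{lemma:mainTheoremText} first improves it to $\tfrac13$.

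Twisting by $\mathcal{O}_{X}(nH)$ also cannot move anything into the range $w-\tfrac12 b^{2}\geq t_{0}$. Indeed $\operatorname{ch}^{(b+n)H}(E(nH))=\operatorname{ch}^{bH}(E)$, and $\nu_{b,w}(E)-b$ depends only on $\operatorname{ch}^{bH}(E)$ and $w-\tfrac12 b^{2}$, so twisting preserves $w-\tfrac12 b^{2}$. What works instead is that semistability is constant along a numerical line (Lemma~\ref{lemma:factsAboutNumericalLines}(4)). You apply the inequality at $b_{*}=\tfrac12(b_{1}(\ell)+b_{2}(\ell))$, where $\nu=b_{*}$ and $w_{*}-\tfrac12 b_{*}^{2}=\tfrac18(b_{2}(\ell)-b_{1}(\ell))^{2}$. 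The range condition thus becomes the width condition of Lemma~\ref{lemma:numericalWallBound}. Rank zero factors are first moved up to a line wider than all of their walls (Lemma~\ref{lemma:rankZeroThirdChernBound}).

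\textbf{The missing idea.} The argument of \cite{feyzbakhsh2023} is not only a finiteness argument. Their Proposition 2.5 needs the \emph{exact} conclusion $s^{\prime}=0$ (for $n=n_{0}$, $c^{\prime}=0$). This is what identifies the destabilizing subobject as $E_{1}\cong T(-n_{0}H)[1]$ with $T$ a line bundle and $\operatorname{ch}_{1}(T)=0$, and so pins down the wall at which the invariant of $v$ itself enters the wall-crossing formula. With the $\Gamma$-correction, the modified Equation (40) gives only $s^{\prime}=O(1)$, so ``finitely many walls and bounded classes'' is not enough.

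The paper closes this with a separate argument:
\begin{enumerate}
 \item Their Equation (35) makes $E_{1}(n_{0})$ semistable at some point of the line $b=1/H^{3}$.
 \item On that line the denominator of $\nu$ on $E_{1}(n_{0})$ equals $1$, so $E_{1}(n_{0})$ stays semistable up to a wide line.
 \item Lemma~\ref{lemma:rankMinusOneThirdChernBound} and the modified Equation (38) then give $0\leq n_{0}s^{\prime}\leq O(1)$.
 \item Since $s^{\prime}\in(2H^{3})^{-1}\mathbb{Z}$, this forces $s^{\prime}=0$ for $n_{0}\gg 0$.
\end{enumerate}

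\textbf{Further repairs you omit.} You would also have to redefine $\ell_{f}$, whose slope $\lambda_{n_{0}}$ acquires $M$-terms. The bounds of their Definition 2.3 also change (constants $A$, $B$), and you must recheck their Lemmas 2.1--2.2 and Proposition 2.6. Finally, BMT is not used only for $\operatorname{ch}_{3}$ bounds. The construction of $\mathsf{J}_{b,w}$ in \cite{feyzbakhsh2024}*{Section 4} uses it for openness and boundedness of the moduli stacks, and these must instead come from \cite{bayer2021} and \cite{piyaratne2019}.
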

 
 Our proof of Theorem \ref{mainApplication} follows the strategy of Feyzbakhsh--Thomas~\cite{feyzbakhsh2023}.
 We obtain the necessary wall-crossing estimates from Theorem \ref{maintheorem}, thereby removing the reliance on the BMT conjecture.
 
\subsection*{Relation to other work}
 While this paper was nearing completion, Li--Liu--Perry--Zhao posted a preprint proving the $\Gamma$-BMT conjecture~\cite{li2026b}.
 Their work overlaps with ours.
 In particular, \cite{li2026b}*{Theorem 1.1} implies Theorem \ref{maintheorem}.
 Moreover, \cite{li2026b}*{Theorem 1.3} gives the same application to DT theory.
 The reduction argument in Lemma \ref{lemma:mainTheoremText} also appears in \cite{li2026b}*{Lemma 2.13}.
 Here, we show the asymptotic inequality already suffices for applications to DT theory.
 It would be interesting to determine whether the asymptotic inequality alone suffices for Castelnuovo-type bounds analogous to \cite{li2026b}*{Theorem 1.5}.
 
\subsection*{Notation}
 Throughout, $X$ is a smooth projective complex threefold: a smooth projective integral scheme of dimension three over $\mathbb{C}$.
 We assume $H$ is an integral ample divisor on $X$.
 We refer to the pair $(X,H)$ as a smooth polarized threefold.
 We do not assume that $H$ is primitive.
 
 We write $D$ for an element of $\operatorname{NS}(X)_{\mathbb{Q}}$ and $B$ for an element of $\operatorname{NS}(X)_{\mathbb{R}}$.
 We write $K_{\mathrm{num}}(X)$ for the numerical Grothendieck group and $N^{\bullet}(X)$ for the numerical Chow ring.
 We write $\operatorname{ch}:K_{\mathrm{num}}(X)\to N^{\bullet}(X)_{\mathbb{Q}}$ for the Chern character morphism and use the shorthand $\operatorname{ch}^{B}(E)=\exp(-B)\cdot\operatorname{ch}(E)\in N^{\bullet}(X)_{\mathbb{R}}$.
 
 We write $D^{b}(X)$ for the bounded derived category of coherent sheaves on $X$.
 When discussing tilt stability on $D^{b}(X)$, we work with the parameterization of \cite{feyzbakhsh2023}.
 This differs from the parameterizations of \cites{bayer2014,bayer2016}, \cite{cheng2026}, and \cite{macri2017}; see Table \ref{table:differentParameterizations}.

\subsection*{Acknowledgments}
 The author benefited from conversations with Naoki Koseki, Franco Rota, and Richard Thomas.
 The author also thanks Chunyi Li and Xiaolei Zhao for their encouragement to finish this paper after \cite{li2026b} was posted.
 The author was supported by the Engineering and Physical Sciences Research Council under grant EP/Y009959/1.
 
\subsection*{AI usage}
 The author used generative AI to review this article and to help understand parts of the literature on Donaldson--Thomas theory.

\section{Preliminaries}
 Assume $(X,H)$ is a smooth polarized threefold.
 Fix $D\in\operatorname{NS}(X)_{\mathbb{Q}}$.

\subsection{Slope stability} 
 \begin{definition}
  \label{definition:slopeStable}
  Fix $b\in\mathbb{R}$.
  For $E\in\operatorname{Coh}(X)$ we write
  \[
   \mu_{b}^{D}(E)=
   \begin{cases}
    \frac{H^{2}\cdot\operatorname{ch}_{1}^{D+b H}(E)}{H^{3}\operatorname{ch}_{0}(E)}: & \operatorname{ch}_{0}(E)\neq 0 \\
    +\infty: & \operatorname{ch}_{0}(E)=0
   \end{cases}.
  \]
  We say that a nonzero sheaf $E\in\operatorname{Coh}(X)$ is \emph{$\mu_{b}^{D}$-semistable} if every proper nonzero subobject $F\subseteq E$ satisfies $\mu_{b}^{D}(F)\leq\mu_{b}^{D}(E/F)$.
 \end{definition}
 
 Every nonzero coherent sheaf $E$ admits a \emph{Harder--Narasimhan filtration}, that is, a filtration in $\operatorname{Coh}(X)$:
 \[
  0=E_{0}\subseteq E_{1}\subseteq\cdots\subseteq E_{m-1}\subseteq E_{m}= E
 \]
 whose consecutive factors, called \emph{Harder--Narasimhan factors}, are nonzero $\mu_{b}^{D}$-semistable sheaves of strictly decreasing slope.
 We write $\mu_{b}^{D,+}(E)=\mu_{b}^{D}(E_{1})$ and $\mu_{b}^{D,-}(E)=\mu_{b}^{D}(E_{m}/E_{m-1})$.
 We formally define $\mu_{b}^{D,+}(0)=-\infty$ and $\mu_{b}^{D,-}(0)=+\infty$.
 Since $\mu_{b}^{D}(E)=\mu_{0}^{D}(E)-b$, $\mu_{b}^{D}$-semistability is independent of $b\in\mathbb{R}$.
 
 \begin{definition}
  For $E\in D^{b}(X)$ we define the \emph{discriminant of $E$ (with respect to $D$)} by
  \[
   \Delta^{D}(E)=\left(H^{2}\cdot\operatorname{ch}_{1}^{D}(E)\right)^{2}-2 H^{3}\operatorname{ch}_{0}(E) (H\cdot\operatorname{ch}_{2}^{D}(E)).
  \]
 \end{definition}
 
 The classical Bogomolov--Gieseker inequality and the Hodge index theorem imply $\Delta^{D}(E)\geq 0$ whenever $E$ is $\mu_{b}^{D}$-semistable.

\subsection{Tilt stability} 
 \begin{definition}
  For each $b\in\mathbb{R}$, we define the torsion pair
  \begin{gather*}
   \mathcal{T}_{b}^{D}=\{E\in\operatorname{Coh}(X):\mu_{b}^{D,-}(E)>0\} \\
   \mathcal{F}_{b}^{D}=\{E\in\operatorname{Coh}(X):\mu_{b}^{D,+}(E)\leq 0\}
  \end{gather*}
  with associated tilted heart $\operatorname{Coh}_{b}^{D}(X)=\langle\mathcal{F}_{b}^{D}[1],\mathcal{T}_{b}^{D}\rangle$.
 \end{definition}

 \begin{definition}
  \label{definition:tiltStability}
  Assume $(b,w)\in\mathbb{R}^{2}$ satisfies $w>\tfrac{1}{2}b^{2}$.
  For $E\in D^{b}(X)$ we define
  \[
   \nu_{b,w}^{D}(E)=
   \begin{cases}
    \frac{H\cdot\operatorname{ch}_{2}^{D}(E)-w H^{3}\operatorname{ch}_{0}(E)}{H^{2}\cdot\operatorname{ch}_{1}^{D}(E)-b H^{3}\operatorname{ch}_{0}(E)}: & H^{2}\cdot\operatorname{ch}_{1}^{D}(E)-b H^{3}\operatorname{ch}_{0}(E)\neq 0 \\
    +\infty: & H^{2}\cdot\operatorname{ch}_{1}^{D}(E)-b H^{3}\operatorname{ch}_{0}(E)=0
   \end{cases}.
  \]
  We say a nonzero object $E\in\operatorname{Coh}_{b}^{D}(X)$ is \emph{$\nu_{b,w}^{D}$-semistable} (resp. $\nu_{b,w}^{D}$-stable) if every proper nonzero subobject $F\subseteq E$ satisfies $\nu_{b,w}^{D}(F)\leq\nu_{b,w}^{D}(E/F)$ (resp. $\nu_{b,w}^{D}(F)<\nu_{b,w}^{D}(E/F)$).
 \end{definition}
 
 There are multiple parameterizations of tilt stability in the literature.
 When $D=0$, Definition \ref{definition:tiltStability} agrees with \cites{feyzbakhsh2023,feyzbakhsh2024}.
 Table \ref{table:differentParameterizations} compares our parameterization with those used elsewhere in the literature.

 \begin{table}
  \centering
\setlength{\tabcolsep}{4pt}
 \renewcommand{\arraystretch}{1.35}
 \begin{tabular}{@{}p{.18\linewidth}p{.28\linewidth}p{.3\linewidth}p{.18\linewidth}@{}}
  \textbf{Source}
  & \textbf{Their parameters}
  & \textbf{Our parameters}
  & \textbf{Their setting}\\
  \hline\hline
  \cites{feyzbakhsh2023,feyzbakhsh2024}
  & $\begin{aligned}[t]
      b_{\mathrm{FT}}&=b,\\
      w_{\mathrm{FT}}&=w
     \end{aligned}$
  & $\begin{aligned}[t]
      b&=b_{\mathrm{FT}},\\
      w&=w_{\mathrm{FT}}
     \end{aligned}$
  & $D=0$\\\hline
  \cite{bayer2014},\newline
  \cite{bayer2016}*{Section 2}
  & $\begin{aligned}[t]
      \omega&=\sqrt{6w-3b^{2}}\,H,\\
      B&=D+bH
     \end{aligned}$
  & $\begin{aligned}[t]
      b&=\frac{H^{2}\cdot(B-D)}{H^{3}},\\
      w&=\frac{H\cdot\bigl(\omega^{2}+3(B-D)^{2}\bigr)}{6H^{3}}
     \end{aligned}$
  & $\omega\in\mathbb{R}_{>0}H$,\newline $B-D\in\mathbb{R}H$\\\hline
  \cite{cheng2026}
  & $\begin{aligned}[t]
      & a=\frac{1}{k}\sqrt{6w-3b^{2}+\frac{H\cdot\operatorname{ch}_{2}(\mathcal N)}{2H^{3}}},\\
      & b_{\mathrm{CF}}=\frac{b}{k}
     \end{aligned}$
  & $\begin{aligned}[t]
      b&=kb_{\mathrm{CF}}, \\
      w&=\frac{k^{2}a^{2}}6+\frac{k^{2}b_{\mathrm{CF}}^{2}}2-\frac{H\cdot\operatorname{ch}_{2}(\mathcal N)}{12H^{3}}
     \end{aligned}$
  & $kH$ is very ample,\newline
    $\iota:X\to\mathbb{P}^{N}$ is a closed immersion inducing $kH$ with normal bundle $\mathcal{N}$,\newline
    $D=\tfrac{1}{2}\operatorname{ch}_{1}(\mathcal{N})$\\\hline
  \cite{bayer2016}*{Section 4},\newline
  \cite{macri2017}
  & $\begin{aligned}[t]
      & \alpha=\sqrt{2w-b^{2}}\\
      & \beta=b \\
      & B_{0}=D
     \end{aligned}$
  & $\begin{aligned}[t]
      b&=\beta,\\
      w&=\tfrac{1}{2}(\alpha^{2}+\beta^{2})
     \end{aligned}$
  & \\\hline
 \end{tabular}
  \caption{
   Comparison of parameterizations of tilt stability.
   Subscripts distinguish conflicting notation;
   otherwise, the notation is taken directly from the cited sources.
  }
  \label{table:differentParameterizations}
 \end{table}
 
 The central charge $Z_{b,w}^{D}:K_{\mathrm{num}}(X)\to\mathbb{C}$ associated with Definition \ref{definition:tiltStability} is
 \[
  Z_{b,w}^{D}(E)=-\left(H\cdot\operatorname{ch}_{2}^{D}(E)-w H^{3}\operatorname{ch}_{0}(E)\right)+i\left(H^{2}\cdot\operatorname{ch}_{1}^{D}(E)-b H^{3}\operatorname{ch}_{0}(E)\right).
 \]
 By \cite{bayer2016}*{Proposition 12.2}, $Z_{b,w}^{D}$ is a weak stability function on $\operatorname{Coh}_{b}^{D}(X)$.
 In particular, if $E\in\operatorname{Coh}_{b}^{D}(X)$ then $H^{2}\cdot\operatorname{ch}_{1}^{D+b H}(E)\geq 0$.
 
 We recall some standard facts about $\nu_{b,w}^{D}$-semistability.
 For proofs see \cite{bayer2014}*{Lemma 3.2.1, Lemma 3.2.4, Corollary 7.3.2} and \cite{bayer2016}*{Theorem 3.5, Proposition 12.2, Proposition 12.5}.
 
 \begin{lemma}
  \label{lemma:basicFactsTiltStability}
  Assume $(b,w)\in\mathbb{R}^{2}$ satisfies $w>\tfrac{1}{2}b^{2}$ and fix $E\in\operatorname{Coh}_{b}^{D}(X)$.
  \begin{enumerate}
   \item{
    If $E$ is $\nu_{b,w}^{D}$-semistable then $\Delta^{D}(E)\geq 0$.
   }
   \item{
    $Z_{b,w}^{D}(E)=0$ if and only if $\mathcal{H}^{0}(E)=E$ and $E$ is supported in dimension $\leq 0$.
    Here, $\mathcal{H}^{i}:D^{b}(X)\to\operatorname{Coh}(X)$ is the cohomology functor associated with the standard $t$-structure.
   }
   \item{
    If $E$ is nonzero then there exists a filtration
    \[
     0=E_{0}\subseteq E_{1}\subseteq\cdots\subseteq E_{m}=E
    \]
    whose consecutive factors are nonzero $\nu_{b,w}^{D}$-semistable objects with strictly decreasing $\nu_{b,w}^{D}$.
    We call such a filtration a \emph{$\nu_{b,w}^{D}$-Harder--Narasimhan filtration} and refer to the consecutive quotients as \emph{Harder--Narasimhan factors}.
   }
  \end{enumerate}
 \end{lemma}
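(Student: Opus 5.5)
The plan is to reduce all three statements to the cited literature by translating parameterizations. Table \ref{table:differentParameterizations} identifies our $(b,w,D)$ with the parameters $(\alpha,\beta,B_{0})=(\sqrt{2w-b^{2}},b,D)$ of \cite{bayer2016}*{Section 4} and \cite{macri2017}. Under this change of variables, $\operatorname{Coh}_{b}^{D}(X)$ is their tilted heart and $Z_{b,w}^{D}$ is their central charge up to a positive rescaling of the real part, so the two notions of (semi)stability coincide. I would first check the key invariance: since $\operatorname{ch}^{D+bH}=e^{-bH}\operatorname{ch}^{D}$, the quadratic form $\Delta^{D}$ satisfies $\Delta^{D+bH}=\Delta^{D}$. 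Hence the discriminant does not depend on the point $(b,w)$, and twisting $D$ by $\mathbb{Q}H$ only shifts $b$. With this in place, each part can also be argued directly, as follows.

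For (2), write $E$ as an extension of $\mathcal{H}^{0}(E)\in\mathcal{T}_{b}^{D}$ by $\mathcal{H}^{-1}(E)[1]$ with $\mathcal{H}^{-1}(E)\in\mathcal{F}_{b}^{D}$. The imaginary part $H^{2}\cdot\operatorname{ch}_{1}^{D+bH}$ is nonnegative on both pieces. It vanishes on $\mathcal{T}_{b}^{D}$ only for sheaves supported in codimension $\geq 2$, and on $\mathcal{F}_{b}^{D}[1]$ only for $\mu_{b}^{D}$-semistable sheaves of slope $0$. In that latter case, the Bogomolov--Gieseker inequality together with $w>\tfrac{1}{2}b^{2}$ forces the real part to be strictly negative, i.e.\ $-\operatorname{Re} Z>0$. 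On torsion sheaves supported in dimension $\leq 1$, the real part $-H\cdot\operatorname{ch}_{2}$ vanishes exactly when the support has dimension $\leq 0$. This gives the characterization of $Z_{b,w}^{D}(E)=0$ and simultaneously shows that $Z_{b,w}^{D}$ is a weak stability function, as in \cite{bayer2016}*{Proposition 12.2}.

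For (3), I would invoke \cite{bayer2016}*{Proposition 12.5}, whose proof I would follow. For $b\in\mathbb{Q}$, the values $H^{2}\cdot\operatorname{ch}_{1}^{D+bH}$ on the heart lie in a discrete subset of $\mathbb{R}_{\geq 0}$. Together with noetherianity of $\operatorname{Coh}_{b}^{D}(X)$, this gives the HN property. For irrational $b$, one uses the limit/deformation argument of \cite{bayer2014}*{Lemma 3.2.4}.

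Part (1) is the main obstacle. Here I would follow \cite{bayer2016}*{Theorem 3.5}. The first step is the large volume limit: for fixed $E$ and $w\gg0$, $\nu_{b,w}^{D}$-semistable objects are either torsion sheaves, $\mu$-semistable torsion-free sheaves, or shifts $F[1]$ of $\mu$-semistable reflexive-like sheaves, possibly up to sheaves supported in dimension $\leq 1$. For these, the classical Bogomolov--Gieseker inequality gives $\Delta^{D}\geq 0$. The second step is to descend along walls, which by (3) and local finiteness are locally finite. At a wall, $E$ has a Jordan--Hölder filtration whose factors $A_{i}$ have $Z$-values on a common ray. The form $\Delta^{D}$ has signature $(1,1)$ on the lattice spanned by $(H^{3}\operatorname{ch}_{0},H^{2}\operatorname{ch}_{1}^{D},H\operatorname{ch}_{2}^{D})$ and is negative semidefinite on $\ker Z_{b,w}^{D}$. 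This yields $\Delta^{D}(E)\geq\sum_{i}\Delta^{D}(A_{i})$. The factors have strictly smaller $H^{2}\cdot\operatorname{ch}_{1}^{D+bH}$, which is discrete when $b$ is rational, so induction on this quantity closes the argument. Irrational $b$ follows by openness of stability.
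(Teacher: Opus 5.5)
Your proposal matches the paper, which gives no independent argument and simply cites \cite{bayer2014}*{Lemma 3.2.1, Lemma 3.2.4, Corollary 7.3.2} and \cite{bayer2016}*{Theorem 3.5, Proposition 12.2, Proposition 12.5}. Your parameter translation and sketches (the case analysis for (2), discreteness plus noetherianity and deformation for (3), the large volume limit and induction across walls for (1)) are the arguments of those sources. Two small slips do no harm: the dictionary actually gives $\nu_{b,w}^{D}=\nu_{\alpha,\beta}+b$, i.e.\ $\operatorname{Re}Z_{b,w}^{D}=\operatorname{Re}Z_{\alpha,\beta}-b\operatorname{Im}Z_{\alpha,\beta}$ is a shear rather than a rescaling of the real part (with the same semistable objects), and $\Delta^{D}$ has signature $(2,1)$ on the three-dimensional space, restricting to signature $(1,1)$ on each plane $Z^{-1}(\mathbb{R}z)$ and to a negative definite form on $\ker Z_{b,w}^{D}$, which is what the inequality $\Delta^{D}(E)\geq\sum_{i}\Delta^{D}(A_{i})$ uses.
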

 
\subsection{Walls for tilt stability}
 For fixed $D\in\operatorname{NS}(X)_{\mathbb{Q}}$, $\nu_{b,w}^{D}$-stability admits a relatively simple wall-and-chamber decomposition.
 Write
 \[
  U=\{(b,w)\in\mathbb{R}^{2}:w>\tfrac{1}{2}b^{2}\}
 \]
 and for $v\in K_{\mathrm{num}}(X)$ write
 \[
  U_{v}^{D}=\{(b,w)\in U:H^{2}\cdot\operatorname{ch}_{1}^{D}(v)-b H^{3}\operatorname{ch}_{0}(v)>0\}
 \]
 which we equip with the subspace topology.
 For an object $E\in D^{b}(X)$ we also write
 \[
  U_{E}^{D,\mathrm{ss}}=\{(b,w)\in U_{[E]}^{D}:E\in\operatorname{Coh}_{b}^{D}(X),E \ \text{is} \ \nu_{b,w}^{D}-\text{semistable}\}.
 \]
 where $[E]\in K_{\mathrm{num}}(X)$ is the numerical class of $E$.
 We equip $U_{E}^{D,\mathrm{ss}}$ with the subspace topology from $U_{[E]}^{D}$.
 
 We recall some standard facts about the wall-and-chamber decomposition of $U_{v}^{D}$;
 see \cite{bayer2016}*{Corollary 3.10, Proposition 12.2, and Proposition 12.5}, \cite{feyzbakhsh2023}*{Proposition 1.4}, and \cite{feyzbakhsh2024}*{Lemma 3.3}.
 
 \begin{lemma}
  \label{lemma:wallProperties}
  Fix a nonzero class $v\in K_{\mathrm{num}}(X)$ with $\Delta^{D}(v)\geq 0$ and $U_{v}^{D}\neq\emptyset$.
  There exists a locally finite collection $\{\ell_{i}\subseteq U_{v}^{D}\}_{i\in I}$ of connected one-dimensional real submanifolds such that the following statements hold for all $E\in D^{b}(X)$ with $[E]=v$:
  \begin{enumerate}
   \item{
    If $V\subseteq U_{v}^{D}\setminus\bigcup_{i\in I}\ell_{i}$ is a connected component then either $V\subseteq U_{E}^{D,\mathrm{ss}}$ or $V\cap U_{E}^{D,\mathrm{ss}}=\emptyset$.
   }
   \item{
    $U_{E}^{D,\mathrm{ss}}\subseteq U_{[E]}^{D}$ is closed.
   }
   \item{
    If $(b,w)\in U_{E}^{D,\mathrm{ss}}\setminus\operatorname{int}(U_{E}^{D,\mathrm{ss}})$ then there exists a short exact sequence in $\operatorname{Coh}_{b}^{D}(X)$:
    \[
     0\to F\to E\to G\to 0
    \]
    satisfying:
    \begin{itemize}
     \item{
      $F,G$ are nonzero $\nu_{b,w}^{D}$-semistable objects,
     }
     \item{
      $\nu_{b,w}^{D}(F)=\nu_{b,w}^{D}(E)=\nu_{b,w}^{D}(G)$, and
     }
     \item{
      $0\leq\Delta^{D}(F),\Delta^{D}(G)<\Delta^{D}(E)$.
     }
    \end{itemize}
   }
   \item{
    Fix $(b_{*},w_{*})\in U$.
    The set
    \[
     \{i\in I:\text{there exists} \ (b_{*},w)\in U \  \text{with} \ w\geq w_{*} \ \text{and} \ (b_{*},w)\in\ell_{i}\}
    \]
    is finite.
   }
  \end{enumerate}
 \end{lemma}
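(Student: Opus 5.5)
The plan is to reduce everything to the general theory of weak stability conditions with the support property, as in \cite{bayer2016}*{Section 3 and Section 12}, and then to use the special shape of walls in the parameterization of Definition \ref{definition:tiltStability}. First I would check that the family $(\operatorname{Coh}_{b}^{D}(X),Z_{b,w}^{D})$, $(b,w)\in U$, is a continuous family of weak stability conditions satisfying the support property with respect to the quadratic form $\Delta^{D}$. The support property comes from Lemma \ref{lemma:basicFactsTiltStability}(1) together with negative definiteness of $\Delta^{D}$ on $\ker Z_{b,w}^{D}$. Translating via Table \ref{table:differentParameterizations} shows this is the family studied in \cite{bayer2016}*{Proposition 12.2, Proposition 12.5}. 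The heart only depends on $b$ and changes at rational values, and the translation handles this in the standard way.

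Next I would describe the numerical walls explicitly. For a class $u$, the condition $\nu_{b,w}^{D}(u)=\nu_{b,w}^{D}(v)$ is
\[
 (H\cdot\operatorname{ch}_{2}^{D}(u)-wH^{3}\operatorname{ch}_{0}(u))(H^{2}\cdot\operatorname{ch}_{1}^{D}(v)-bH^{3}\operatorname{ch}_{0}(v))=(H\cdot\operatorname{ch}_{2}^{D}(v)-wH^{3}\operatorname{ch}_{0}(v))(H^{2}\cdot\operatorname{ch}_{1}^{D}(u)-bH^{3}\operatorname{ch}_{0}(u)).
\]
After expanding, the $bw$ terms cancel, so this condition is affine linear in $(b,w)$. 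Hence each numerical wall is the intersection of a line with $U_{v}^{D}$, and these lines all pass through the point $\Pi(v)$ determined by $v$ (as in \cite{feyzbakhsh2023}). I take $\{\ell_{i}\}$ to be the connected components of those numerical walls $W(v,u)\cap U_{v}^{D}$ for which $u$ is the class of a $\nu_{b,w}^{D}$-semistable subobject $F\subseteq E$ at some point of the wall, where $[E]=v$ and $0\le\Delta^{D}(u),\Delta^{D}(v-u)<\Delta^{D}(v)$.

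The key step, and the main obstacle, is local finiteness, together with the finer statement (4). On a compact subset $K\subseteq U_{v}^{D}$ I would argue as in \cite{bayer2016}*{Corollary 3.10}. By the support property, the central charges $Z_{b,w}^{D}(u)$ of possible destabilizing classes are bounded over $K$, since they lie in the same half-plane as $Z(v)$ and sum to it. The bound $\Delta^{D}(u)<\Delta^{D}(v)$ then leaves only finitely many numerical possibilities for $(\operatorname{ch}_{0},H^{2}\cdot\operatorname{ch}_{1}^{D},H\cdot\operatorname{ch}_{2}^{D})(u)$, and these are the only data entering the wall equation. For (4), the vertical ray $\{b_{*}\}\times[w_{*},\infty)$ is not compact. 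Here I would use the linearity: each wall is a line through $\Pi(v)$, and as $w\to\infty$ the point $(b_{*},w)$ is eventually not destabilized at all, by the large volume limit \cite{bayer2016}*{Proposition 12.5}. So only walls meeting a compact segment $\{b_{*}\}\times[w_{*},w_{1}]$ matter, and there are finitely many of those. Making this uniform in $E$ with the same $w_{1}$ is the delicate point. I would bound $w_{1}$ purely in terms of $v$, using that a wall of $v$ must satisfy $\Delta^{D}(u)\ge0$ and $\Delta^{D}(v-u)\ge0$, which bounds the slopes of the lines, as in \cite{feyzbakhsh2024}*{Lemma 3.3}.

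Finally, (1)–(3) follow formally. Off the walls, semistability of $E$ cannot change along a path, since a change would require a subobject of equal $\nu$ at some point, i.e.\ a point on some $\ell_{i}$; this gives (1). Closedness (2) follows from the support property, because a limit of semistable points is semistable (openness of instability). For (3), at a boundary point of $U_{E}^{D,\mathrm{ss}}$ I would take a Jordan--Hölder type sequence destabilizing nearby, with $F$ the first factor, and obtain the strict inequality $\Delta^{D}(F),\Delta^{D}(G)<\Delta^{D}(E)$ from \cite{bayer2016}*{Corollary 3.10}.
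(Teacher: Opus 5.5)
Your overall route matches the paper's. The paper proves this lemma only by pointing to \cite{bayer2016}*{Corollary 3.10, Propositions 12.2 and 12.5}, \cite{feyzbakhsh2023}*{Proposition 1.4} and \cite{feyzbakhsh2024}*{Lemma 3.3}, which are exactly the results you reduce to. However, the justification you give for the step you yourself flag as delicate, the $v$-only bound in (4), does not work as stated. If $\operatorname{ch}_0(u)\neq 0$ and $\Pi(u)$ lies on the line $\ell$, then
\[
 \Delta^{D}(u)=(H^{3}\operatorname{ch}_{0}(u))^{2}\,\bigl(\mu-b_{1}(\ell)\bigr)\bigl(\mu-b_{2}(\ell)\bigr),
\]
where $\mu$ is the $b$-coordinate of $\Pi(u)$. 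So $\Delta^{D}(u)\geq 0$ says only that $\Pi(u)$ lies outside the open chord $\ell\cap U$, which does not constrain the slope. Concretely, take $D=0$, $\operatorname{ch}_0(v)\geq 2$, and $u=[I_C(-NH)]$. Then $\Pi(u)$ sits just below the parabola at $b=-N$, and $\Delta(u)=2H^3(H\cdot C)$ stays bounded. The point $\Pi(v-u)$ lies on $\ell$ beyond $\Pi(v)$, so $\Delta(v-u)\geq 0$ automatically. The line through $\Pi(v)$ and $\Pi(u)$ has slope about $-N/2$, so it meets $\{b_*\}\times[w_*,\infty)$ at heights tending to infinity. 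Hence infinitely many numerical walls satisfying your two conditions cross the ray.

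What rules these out is that $F$ and $E/F$ lie in $\operatorname{Coh}_{b}^{D}(X)$ for every $b$ along the chord (Lemma~\ref{lemma:factsAboutNumericalLines}(4)). This pins $\Pi(u)$ and $\Pi(v-u)$ to the correct sides of the chord according to the signs of $\operatorname{ch}_0$. Write $r=H^3\operatorname{ch}_0$.
\begin{itemize}
\item If $\operatorname{ch}_0(u)>\operatorname{ch}_0(v)>0$, or symmetrically $\operatorname{ch}_0(u)<0$, this gives $(b_2-b_1)^2\,r(u)\bigl(r(u)-r(v)\bigr)\leq\Delta^D(v)$.
\item If $0<\operatorname{ch}_0(u)<\operatorname{ch}_0(v)$, it forces $\mu(u),\mu(v-u)\geq b_2(\ell)$. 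One of them lies in $[b_2(\ell),\mu(v)]$, and equality with $\mu(v)$ would give $\Pi(u)=\Pi(v)$, hence no wall. Integrality of $H^2\cdot\operatorname{ch}_1$ then gives $\mu(v)-b_2(\ell)\geq 1/(H^3\operatorname{ch}_0(v)^2)$, which bounds the width of the wall.
\item For $\operatorname{ch}_0(v)=0$ the walls are parallel lines of slope $H\cdot\operatorname{ch}_2^D(v)/H^2\cdot\operatorname{ch}_1^D(v)$, not lines through $\Pi(v)$ as you wrote. Here one gets the width bound \eqref{equation:lisBound} instead.
\end{itemize}
Alternatively, the strict upper bounds $\Delta^D(u),\Delta^D(v-u)<\Delta^D(v)$ that you built into your definition of the $\ell_i$, combined with integrality, also suffice; but you did not invoke them at this point.

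There are also two smaller slips in your local-finiteness step. First, central charges lying in the same half-plane and summing to $Z(v)$ need not be bounded. What bounds them is that, at a point of the wall, $Z(F)$ lies on the segment $[0,Z(E)]$. Second, it is $\Delta^D(u)\geq 0$ (the support property), not $\Delta^D(u)<\Delta^D(v)$, that then leaves only finitely many classes.
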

 
 In the setting of Lemma \ref{lemma:wallProperties}, we call each $\ell_{i}$ a \emph{wall} and each connected component $V\subseteq U_{v}^{D}\setminus\cup_{i\in I}\ell_{i}$ a \emph{chamber}.
 
 For calculations, it is useful to introduce a weaker notion, which we call a numerical line.
 It is even weaker than the standard notion of a numerical wall, since it need not be cut out by an equation $\nu_{b,w}^{D}(v)=\nu_{b,w}^{D}(v^{\prime})$ for numerical classes $v,v^{\prime}$.
 
 \begin{definition}
  \label{definition:numericalLine}
  Fix $v\in K_{\mathrm{num}}(X)$ with $\Delta^{D}(v)\geq 0$ and $U_{v}^{D}\neq\emptyset$.
  A \emph{numerical line for $v$} is a line $\ell\subseteq\mathbb{R}^{2}$ satisfying $\ell\cap U_{v}^{D}\neq\emptyset$ and the following conditions:
  \begin{enumerate}
   \item{
    If $\operatorname{ch}_{0}(v)\neq 0$ then $\ell$ passes through
    \[
     \Pi(v)=\left(\frac{H^{2}\cdot\operatorname{ch}_{1}^{D}(v)}{H^{3}\operatorname{ch}_{0}(v)},\frac{H\cdot\operatorname{ch}_{2}^{D}(v)}{H^{3}\operatorname{ch}_{0}(v)}\right).
    \]
   }
   \item{
    If $\operatorname{ch}_{0}(v)=0$ then $\ell$ has slope
    \[
     \frac{H\cdot\operatorname{ch}_{2}^{D}(v)}{H^{2}\cdot\operatorname{ch}_{1}^{D}(v)}.
    \]
   }
  \end{enumerate}
  
  For a numerical line $\ell$, we also write
  \begin{gather*}
   b_{1}(\ell)=\min\{b\in\mathbb{R}:\text{there exists} \ w\geq\tfrac{1}{2}b^{2} \ \text{with} \ (b,w)\in\ell\},\\
   b_{2}(\ell)=\max\{b\in\mathbb{R}:\text{there exists} \ w\geq\tfrac{1}{2}b^{2} \ \text{with} \ (b,w)\in\ell\}.
  \end{gather*}
 \end{definition}
 
 The following lemma collects some standard facts and calculations regarding numerical lines.
 
 \begin{lemma}
  \label{lemma:factsAboutNumericalLines}
  Fix $v\in K_{\mathrm{num}}(X)$ with $\Delta^{D}(v)\geq 0$ and $U_{v}^{D}\neq\emptyset$.
  \begin{enumerate}
   \item{
    If $\ell$ is a numerical line for $v$ then
    \[
     2 H\cdot\operatorname{ch}_{2}^{D}(v)=(b_{1}(\ell)+b_{2}(\ell))H^{2}\cdot\operatorname{ch}_{1}^{D}(v)-b_{1}(\ell)b_{2}(\ell) H^{3}\operatorname{ch}_{0}(v).
    \]
   }
   \item{
    If $\ell$ is a numerical line for $v$ then $\ell\cap U_{v}^{D}=\ell\cap U$.
   }
   \item{
    Every wall of Lemma \ref{lemma:wallProperties} is a subset of a numerical line of $v$.
   }
   \item{
    Assume $E\in D^{b}(X)$ satisfies $[E]=v$ and $\ell$ is a numerical line of $v$.
    If $E\in\operatorname{Coh}_{b_{*}}^{D}(X)$ is $\nu_{b_{*},w_{*}}^{D}$-semistable for some $(b_{*},w_{*})\in \ell\cap U_{v}^{D}$ then $E\in\operatorname{Coh}_{b}^{D}(X)$ and $E$ is $\nu_{b,w}^{D}$-semistable for all $(b,w)\in \ell\cap U$.
   }
  \end{enumerate}
 \end{lemma}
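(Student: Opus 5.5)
The plan is to verify each of the four items directly from the definitions, treating the cases $\operatorname{ch}_{0}(v)\neq 0$ and $\operatorname{ch}_{0}(v)=0$ separately. Write $r=H^{3}\operatorname{ch}_{0}(v)$, $c=H^{2}\cdot\operatorname{ch}_{1}^{D}(v)$, $d=H\cdot\operatorname{ch}_{2}^{D}(v)$. The basic observation is that the condition $\nu_{b,w}^{D}(E)=\nu_{b,w}^{D}(v)$ for $(b,w)$ on a line $\ell$ is governed by the linear function $(b,w)\mapsto d-wr-\lambda(c-br)$. Concretely, the central charge $Z_{b,w}^{D}(v)$, viewed as a point of the $(b,w)$-plane after rescaling by $r$, is the vector from $(b,w)$ to $\Pi(v)$ when $r\neq 0$. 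When $r=0$ it points in the fixed direction $(c,d)$. Hence a line through $\Pi(v)$, respectively of slope $d/c$, is exactly a line along which $Z_{b,w}^{D}(v)$ keeps a direction governed by that line.

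For (1), parametrize $\ell$ as $w=m b+k$. Then $b_{1}(\ell),b_{2}(\ell)$ are the roots of $\tfrac{1}{2}b^{2}-mb-k=0$, so $b_{1}+b_{2}=2m$ and $b_{1}b_{2}=-2k$. These roots are real because $\Delta^{D}(v)\geq 0$ forces $\Pi(v)$ to lie on or below the parabola $w=\tfrac{1}{2}b^{2}$. That point is on $\ell$, and $\ell\cap U\neq\emptyset$.
\begin{itemize}
\item If $r\neq 0$, the condition $\Pi(v)\in\ell$ reads $d/r=m c/r+k$, i.e.\ $2d=2mc+2kr=(b_{1}+b_{2})c-b_{1}b_{2}r$.
\item If $r=0$, the slope condition gives $m=d/c$, so $2d=2mc$, which again matches the formula.
\end{itemize}

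For (2), note that $U_{v}^{D}$ is cut out in $U$ by the half-plane condition $c-br>0$. We must show that $\ell\cap U$ lies entirely on one side of the vertical line $b=c/r$, and on the correct side.
\begin{itemize}
\item When $r=0$, the condition is $c>0$, independent of $(b,w)$. It holds since $\ell\cap U_{v}^{D}\neq\emptyset$.
\item When $r\neq0$, the vertical line $b=c/r$ passes through $\Pi(v)\in\ell$, which lies outside $U$. Hence the segment $\ell\cap U$, which is connected since $U$ is convex, does not meet $b=c/r$. It therefore lies on one side, and that side is the positive one because $\ell\cap U_{v}^{D}\neq\emptyset$.
\end{itemize}

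For (3), a wall is contained in the locus where $\nu_{b,w}^{D}(F)=\nu_{b,w}^{D}(E)$ for a destabilizing $F$. The standard computation (Bayer--Macr\`i--Toda / Feyzbakhsh--Thomas) shows that the equation $\nu_{b,w}^{D}(u)=\nu_{b,w}^{D}(v)$, after clearing denominators, becomes
\[
(d_{u}-wr_{u})(c-br)=(d-wr)(c_{u}-br_{u}).
\]
The quadratic terms $wbr_{u}r$ cancel, leaving a linear equation in $(b,w)$. Its solution set therefore contains $\Pi(v)$ when $r\neq0$, since both sides vanish there. When $r=0$, the linear coefficients are proportional to $(c, d)$ data, which yields slope $d/c$. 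I would cite \cite{feyzbakhsh2023}*{Proposition 1.4} for this.

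For (4), which I expect to be the main step, let $S=U_{E}^{D,\mathrm{ss}}\cap\ell$. This set is closed in $\ell\cap U$ by Lemma \ref{lemma:wallProperties}(2), and it is nonempty. It remains to show that $S$ is open in $\ell\cap U$; connectedness then gives $S=\ell\cap U$.
\begin{itemize}
\item At a point of $S$ off all walls, openness is immediate from Lemma \ref{lemma:wallProperties}(1), together with local finiteness of the walls.
\item At a point $p\in S$ lying on a wall, the wall is contained in some numerical line. If that line is not $\ell$, it meets $\ell$ only at $p$ (both pass through $\Pi(v)$, or both are parallel), so one must argue that semistability persists across it.
\end{itemize}
The key trick for the second case is this. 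If $E$ is destabilized at nearby points of $\ell$ by a subobject $F$, then $\nu(F)>\nu(E)$ there, with equality only on the numerical wall of $F$. That wall is a line through $\Pi(v)$, or of slope $d/c$, which meets $\ell$ only at $\Pi(v)$ (outside $U$) or not at all, unless it equals $\ell$. So along $\ell$ the sign of $\nu(F)-\nu(E)$ is constant. Transporting this through HN filtrations needs care, namely that $E$ stays in the heart $\operatorname{Coh}_{b}^{D}(X)$ as $b$ varies. I would handle membership in the heart via the standard argument that objects can only leave the heart where $Z$ becomes degenerate, i.e.\ at $c-br=0$, which (2) excludes. This is the argument of \cite{bayer2016}*{Theorem 3.5}, to which I would reduce.
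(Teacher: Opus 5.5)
Your parts (1)--(3) are fine and match the paper. For (1) and (2) you give the paper's own arguments, and for (3) the paper also just cites \cite{bayer2016}*{Proposition 12.5}.

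\textbf{The gap is in (4).} The paper does not prove (4) directly: it cites \cite{bayer2016}*{Lemma 4.3}, using (2) only to rule out a vertical wall. Your open--closed argument fails at exactly the nontrivial step, in three places.

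\emph{The case split.} Distinct numerical lines of $v$ meet only at $\Pi(v)\notin U$, or are parallel. So no wall supported on another line passes through a point of $\ell\cap U$, and your case ``meets $\ell$ only at $p$'' is vacuous. The only case that matters is a wall lying on $\ell$ itself, where Lemma \ref{lemma:wallProperties}(1) gives nothing. For example, $F\oplus G$ with $\nu_{b,w}^{D}(F)=\nu_{b,w}^{D}(G)$ along $\ell$ is semistable on $\ell$ and unstable on both sides.

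\emph{The constant-sign claim.} Your claim that $\nu_{b,w}^{D}(F)-\nu_{b,w}^{D}(E)$ has constant sign along $\ell\cap U$ is false. The difference equals
\[
 \frac{N(b,w)}{\bigl(H^{2}\cdot\operatorname{ch}_{1}^{D+bH}(F)\bigr)\bigl(H^{2}\cdot\operatorname{ch}_{1}^{D+bH}(E)\bigr)},
\]
where $N$ is the linear form cutting out the numerical wall of $F$. On $\ell\cap U$, $N$ and the second factor have constant sign. The first factor, however, changes sign at $b=\mu_{0}^{D}(F)$ when $\operatorname{ch}_{0}(F)\neq0$, and that is exactly where $F$ leaves the heart. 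Since $\ell$ is not vertical, the hearts at $p$ and at a nearby point $q$ differ. A destabilizing subobject of $E$ in the heart at $q$ need not be a subobject, or even an object, of the heart at $p$, so no contradiction with semistability at $p$ follows.

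\emph{The proposed remedy.} The principle that objects leave the heart only where $Z$ degenerates is wrong. Membership $E\in\operatorname{Coh}_{b}^{D}(X)$ is governed by $\mu_{b}^{D,-}(\mathcal{H}^{0}(E))>0\geq\mu_{b}^{D,+}(\mathcal{H}^{-1}(E))$, not by the vanishing of $\operatorname{Im}Z_{b,w}^{D}(E)$. For $D=0$, $\mathcal{O}_{X}\oplus\mathcal{O}_{X}(2H)$ leaves $\operatorname{Coh}_{b}(X)$ at $b=0$, while its imaginary part vanishes only at $b=1$. Also, \cite{bayer2016}*{Theorem 3.5} is the Bogomolov--Gieseker inequality for tilt-semistable objects, not a statement about heart membership.

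\textbf{What is missing} is an argument that combines semistability at $(b_{*},w_{*})$ with the classical Bogomolov inequality.
\begin{enumerate}
 \item Let $T$ be the last $\mu$-Harder--Narasimhan factor of $\mathcal{H}^{0}(E)$, when it has positive rank. It is a quotient of $E$ in $\operatorname{Coh}_{b_{*}}^{D}(X)$, so $\nu_{b_{*},w_{*}}^{D}(T)\geq\nu_{b_{*},w_{*}}^{D}(E)$.
 \item Since $\nu_{b_{*},w_{*}}^{D}(T)$ is the slope of the segment from $(b_{*},w_{*})$ to $\Pi(T)$, this puts $\Pi(T)$ on or above $\ell$, to the right of $b_{*}$.
 \item $\Delta^{D}(T)\geq0$ puts $\Pi(T)$ on or below the parabola $w=\tfrac{1}{2}b^{2}$. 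Together these force $\mu_{0}^{D}(T)\geq b_{2}(\ell)$.
 \item Symmetrically, the first factor of $\mathcal{H}^{-1}(E)$ has slope at most $b_{1}(\ell)$. Hence $E\in\operatorname{Coh}_{b}^{D}(X)$ for all $b\in(b_{1}(\ell),b_{2}(\ell))$.
\end{enumerate}
The same control is then needed for the destabilizing pieces before semistability along $\ell$ follows. One way is to apply it to the $\nu_{b_{*},w_{*}}^{D}$-stable factors of $E$, all of which have $\ell$ as a numerical line, together with finiteness of destabilizing classes near a point. Either supply this, or cite \cite{bayer2016}*{Lemma 4.3} as the paper does.
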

 \begin{proof}
  The equation of $\ell$ is
  \[
   w=\tfrac{1}{2}(b_{1}(\ell)+b_{2}(\ell))b-\tfrac{1}{2}b_{1}(\ell)b_{2}(\ell),
  \]
  so (1) follows by definition of a numerical line.
  
  If $\operatorname{ch}_{0}(v)\neq 0$ then the only point of $\ell$ where $H^{2}\cdot\operatorname{ch}_{1}^{D}(v)-b H^{3}\operatorname{ch}_{0}(v)$ vanishes is $\Pi(v)$.
  Also, since $\Delta^{D}(v)\geq 0$, $\Pi(v)\not\in U$.
  Therefore, since $H^{2}\cdot\operatorname{ch}_{1}^{D}(v)-b H^{3}\operatorname{ch}_{0}(v)>0$ at some point in $\ell\cap U$, it is positive everywhere.
  If $\operatorname{ch}_{0}(v)=0$, then $H^{2}\cdot\operatorname{ch}_{1}^{D}(v)-b H^{3}\operatorname{ch}_{0}(v)$ is a positive constant.
  In either case, $\ell\cap U=\ell\cap U_{v}^{D}$.
  
  Assertion (3) follows from \cite{bayer2016}*{Proposition 12.5} (see also \cite{feyzbakhsh2023}*{Proposition 1.3}).
  
  Assertion (4) follows from \cite{bayer2016}*{Lemma 4.3}; assertion $(2)$ above excludes a wall of infinite slope.
 \end{proof}

\section{A refinement of Cheng--Feyzbakhsh's bound}
 In this section we apply the technique of \cite{bayer2016}*{Theorem 5.4} to obtain an asymptotic refinement of \cite{cheng2026}*{Corollary 1.3}.
 
 Choose $k\in\mathbb{Z}_{>0}$ such that $kH$ is very ample.
 Fix a closed immersion $\iota:X\to\mathbb{P}^{N}$ such that $\iota^{*}\mathcal{O}_{\mathbb{P}^{N}}(1)=\mathcal{O}_{X}(kH)$ and $\iota$ attains the invariant $a(X,kH)$ of \cite{cheng2026}*{Section 3}.
 Write $\mathcal{N}$ for the associated normal bundle.
 
 We restate \cite{cheng2026}*{Corollary 1.3} in our parameterization of tilt semistability.
 The recent result \cite{li2026}*{Proposition 4.5} supplies the family with real parameters required in \cite{cheng2026}*{Section 3.3}.
 Consequently, the bound of \cite{cheng2026}*{Corollary 1.3} applies to all real parameters satisfying the required positivity condition.
 
 \begin{lemma}[\cite{cheng2026}*{Corollary 1.3}]
  \label{lemma:chengFeyzbakhshInequality}
  Choose
  \[
   t_{0}>\max\left\{0,\frac{k^{2} a(X,kH)^{2}}{6}-\frac{H\cdot\operatorname{ch}_{2}(\mathcal{N})}{12 H^{3}}\right\}.
  \]
  Suppose $(b,w)\in U$ satisfies $w-\tfrac{1}{2}b^{2}\geq t_{0}$.
  If $E\in\operatorname{Coh}_{b}^{\frac{1}{2}\operatorname{ch}_{1}(\mathcal{N})}(X)$ is $\nu_{b,w}^{\frac{1}{2}\operatorname{ch}_{1}(\mathcal{N})}$-semistable with $\nu_{b,w}^{\frac{1}{2}\operatorname{ch}_{1}(\mathcal{N})}(E)=b$ then
  \[
   \operatorname{ch}_{3}^{\frac{1}{2}\operatorname{ch}_{1}(\mathcal{N})+b H}(E)+\frac{\operatorname{ch}_{2}(\mathcal{N})}{12}\cdot\operatorname{ch}_{1}^{\frac{1}{2}\operatorname{ch}_{1}(\mathcal{N})+b H}(E)<\left(3(w-\tfrac{1}{2}b^{2})+\frac{H\cdot\operatorname{ch}_{2}(\mathcal{N})}{4 H^{3}}\right)H^{2}\cdot\operatorname{ch}_{1}^{\frac{1}{2}\operatorname{ch}_{1}(\mathcal{N})+b H}(E).
  \]
 \end{lemma}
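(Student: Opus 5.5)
This lemma is a restatement of \cite{cheng2026}*{Corollary 1.3}, so I will not reprove it. I will translate it through the dictionary of Table \ref{table:differentParameterizations}, row \cite{cheng2026}. Set $D=\tfrac{1}{2}\operatorname{ch}_{1}(\mathcal{N})$. Cheng--Feyzbakhsh work with parameters $(a,b_{\mathrm{CF}})$ relative to the very ample class $kH$, and their twisted Chern characters are $\operatorname{ch}^{D+b_{\mathrm{CF}}kH}$. Under $b=kb_{\mathrm{CF}}$ we have $b_{\mathrm{CF}}\cdot kH=bH$. The first step is to check that their tilted heart and tilt slope match $\operatorname{Coh}_{b}^{D}(X)$ and $\nu_{b,w}^{D}$ up to positive rescaling and an affine change of the slope. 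This is where the $\operatorname{ch}_{2}(\mathcal{N})$-correction in $w$ enters. Concretely, I will show the two central charges differ by an element of $\mathrm{GL}_{2}^{+}(\mathbb{R})$, so both give the same semistable objects. I will also check that the condition $\nu_{b,w}^{D}(E)=b$ corresponds exactly to their hypothesis that $E$ has slope zero at the shifted point, namely $\nu^{D}_{b,w}(E)-b=0$.

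Next, I translate the parameter range. Their bound requires $a>a(X,kH)$. Using $k^{2}a^{2}/6=w-\tfrac{1}{2}b^{2}+\tfrac{H\cdot\operatorname{ch}_{2}(\mathcal{N})}{12H^{3}}$, this becomes
\[
w-\tfrac{1}{2}b^{2}>\frac{k^{2}a(X,kH)^{2}}{6}-\frac{H\cdot\operatorname{ch}_{2}(\mathcal{N})}{12H^{3}}.
\]
This is guaranteed by $w-\tfrac{1}{2}b^{2}\geq t_{0}$. Taking $t_{0}>0$ as well ensures $(b,w)\in U$ and that $a$ is real.

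Then I rewrite their inequality. In their normalization it reads
\[
\operatorname{ch}_{3}^{D+bH}+\tfrac{\operatorname{ch}_{2}(\mathcal{N})}{12}\cdot\operatorname{ch}_{1}^{D+bH}<\tfrac{k^{2}a^{2}}{2}\,H^{2}\cdot\operatorname{ch}_{1}^{D+bH}.
\]
The normalization of $H^{2}$ versus $(kH)^{2}$ is the part to track carefully. Substituting $k^{2}a^{2}/2=3(w-\tfrac{1}{2}b^{2})+\tfrac{H\cdot\operatorname{ch}_{2}(\mathcal{N})}{4H^{3}}$ gives exactly the displayed inequality.

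Finally, the bound is needed for all real parameters, not just rational ones. Here I will invoke \cite{li2026}*{Proposition 4.5}, which supplies the real-parameter family of stability conditions demanded in \cite{cheng2026}*{Section 3.3}.

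The main obstacle is bookkeeping. The powers of $k$ must be matched between $kH$-degrees and $H$-degrees, and the sign and size of the $\operatorname{ch}_{2}(\mathcal{N})$ shifts must be confirmed so that the heart and slope really coincide. If the affine identification fails, I would instead compare the two central charges directly: express each as real and imaginary parts linear in $(\operatorname{ch}_{0},H^{2}\operatorname{ch}_{1}^{D},H\operatorname{ch}_{2}^{D})$ and exhibit the $\mathrm{GL}_{2}^{+}$ matrix relating them.
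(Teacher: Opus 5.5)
Your proposal is correct and follows the paper's proof. You identify Cheng--Feyzbakhsh's heart and tilt slope with $\operatorname{Coh}_{b}^{D}(X)$ and $(\nu_{b,w}^{D}(E)-b)/k$ (your $\mathrm{GL}_{2}^{+}(\mathbb{R})$ claim holds, since the relating matrix is upper triangular with positive diagonal), translate $\alpha>a(X,kH)$ into the bound on $t_{0}$, and invoke Li's real-parameter family. One correction to your bookkeeping: their twisted character is $\exp(-\beta kH)\operatorname{td}(\mathcal{N})^{-1}\operatorname{ch}=\bigl(1+\tfrac{1}{12}\operatorname{ch}_{2}(\mathcal{N})\bigr)\operatorname{ch}^{D+bH}$, not $\operatorname{ch}^{D+bH}$ itself, and this factor is what the shift in $w$ absorbs in degree two and what produces the $\tfrac{1}{12}\operatorname{ch}_{2}(\mathcal{N})\cdot\operatorname{ch}_{1}^{D+bH}$ term in degree three.
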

 \begin{proof}
  We write $t=w-\tfrac{1}{2}b^{2}$ and
  \[
   (\alpha,\beta)=\left(\frac{\sqrt{6t+\frac{H\cdot\operatorname{ch}_{2}(\mathcal{N})}{2 H^{3}}}}{k},\frac{b}{k}\right).
  \]
  Our choice of $t_{0}$ ensures that $\alpha>a(X,kH)$.
  In the notation of \cite{cheng2026},
  \[
   \widetilde{\operatorname{ch}}^{\beta k H}=\exp(-\beta kH)\cdot\operatorname{td}(\mathcal{N})^{-1}\cdot\operatorname{ch}=\left(1+\frac{\operatorname{ch}_{2}(\mathcal{N})}{12}\right)\cdot\operatorname{ch}^{\frac{1}{2}\operatorname{ch}_{1}(\mathcal{N})+b H}.
  \]
  In particular, the heart $\operatorname{Coh}^{B_{X}}(X)$ used there is $\operatorname{Coh}_{b}^{\frac{1}{2}\operatorname{ch}_{1}(\mathcal{N})}(X)$.
  Likewise, the tilt slope used there is
  \[
   \frac{kH\cdot\widetilde{\operatorname{ch}}_{2}^{\beta kH}(E)-\tfrac{1}{6}\alpha^{2}k^{3}H^{3}\operatorname{ch}_{0}(E)}{k^{2}H^{2}\cdot\widetilde{\operatorname{ch}}_{1}^{\beta k H}(E)}=\frac{\nu_{b,w}^{\frac{1}{2}\operatorname{ch}_{1}(\mathcal{N})}(E)-b}{k}.
  \]
  Therefore, $E$ is tilt semistable of slope zero in that parameterization.
  The claim follows from \cite{cheng2026}*{Corollary 1.3} in view of \cite{li2026}*{Proposition 4.5} and \cite{cheng2026}*{Section 3.3}.
 \end{proof}
 
 We apply a technique similar to that of \cite{bayer2016}*{Section 5} to Lemma \ref{lemma:chengFeyzbakhshInequality}.
 This improves the leading coefficient at the cost of a larger constant term.
 
 \begin{lemma}
  \label{lemma:mainTheoremText}
  Choose $t_{0}$ as in Lemma \ref{lemma:chengFeyzbakhshInequality}.
  If $(b,w)\in U$ satisfies $w-\tfrac{1}{2} b^{2}\geq t_{0}$ then every $\nu_{b,w}^{\frac{1}{2}\operatorname{ch}_{1}(\mathcal{N})}$-semistable $E\in\operatorname{Coh}_{b}^{\frac{1}{2}\operatorname{ch}_{1}(\mathcal{N})}(X)$ with $\nu_{b,w}^{\frac{1}{2}\operatorname{ch}_{1}(\mathcal{N})}(E)=b$ satisfies
  \[
   \operatorname{ch}_{3}^{\frac{1}{2}\operatorname{ch}_{1}(\mathcal{N})+b H}(E)+\frac{\operatorname{ch}_{2}(\mathcal{N})}{12}\cdot\operatorname{ch}_{1}^{\frac{1}{2}\operatorname{ch}_{1}(\mathcal{N})+b H}(E)<\left(\frac{w-\tfrac{1}{2}b^{2}}{3}+\frac{8 t_{0}}{3}+\frac{H\cdot\operatorname{ch}_{2}(\mathcal{N})}{4 H^{3}}\right)H^{2}\cdot\operatorname{ch}_{1}^{\frac{1}{2}\operatorname{ch}_{1}(\mathcal{N})+b H}(E).
  \]
 \end{lemma}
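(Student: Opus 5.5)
The plan is to adapt the argument of \cite{bayer2016}*{Section 5}. The idea is to move $E$ along a numerical line on which it stays semistable, down to points with $w'-\tfrac{1}{2}b'^{2}=t_{0}$, and apply Lemma \ref{lemma:chengFeyzbakhshInequality} there. Write $D=\tfrac{1}{2}\operatorname{ch}_{1}(\mathcal{N})$, $v=[E]$ and $t=w-\tfrac{1}{2}b^{2}\geq t_{0}$.

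\emph{Step 1: the numerical line.} The slope $\nu_{b',w'}^{D}(v)$ is the slope of the segment joining $(b',w')$ to $\Pi(v)$, or the fixed slope of Definition \ref{definition:numericalLine}(2) when $\operatorname{ch}_{0}(v)=0$. Hence the line $\ell$ through $(b,w)$ of slope $\nu_{b,w}^{D}(E)=b$ is a numerical line for $v$. By Lemma \ref{lemma:factsAboutNumericalLines}(4), $E$ lies in $\operatorname{Coh}_{b'}^{D}(X)$ and is $\nu_{b',w'}^{D}$-semistable for every $(b',w')\in\ell\cap U$. Since $\ell$ has slope $b$ and passes through $(b,w)$ with $t\geq t_{0}$, the function $w'-\tfrac{1}{2}b'^{2}$ restricted to $\ell$ is a downward parabola in $b'$ with maximum $t$ at $b'=b$. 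So there are exactly two points $p_{\pm}=(b\pm s,w_{\pm})\in\ell$ with $w_{\pm}-\tfrac{1}{2}(b\pm s)^{2}=t_{0}$, where $s=\sqrt{2(t-t_{0})}$.

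\emph{Step 2: Lemma \ref{lemma:chengFeyzbakhshInequality} in non-slope-zero form.} At $p_{\pm}$ we have $\nu^{D}_{p_{\pm}}(E)=b\neq b\pm s$, so Lemma \ref{lemma:chengFeyzbakhshInequality} does not apply directly. Following \cite{bayer2016}*{Theorem 4.2} (and Cheng--Feyzbakhsh's own quadratic-form version of their bound), I would first upgrade it to a statement valid for arbitrary tilt slope. For $E$ semistable at $(b',w')$ with $t'=w'-\tfrac{1}{2}b'^{2}\geq t_{0}$, the claim is an inequality of the shape
\[
 (3t'+c)\bigl(\ldots\bigr)+\bigl(H\cdot\widetilde{\operatorname{ch}}_{2}^{D+b'H}(E)\bigr)\bigl(\ldots\bigr)\geq H^{2}\cdot\operatorname{ch}_{1}^{D+b'H}(E)\,\widetilde{\operatorname{ch}}_{3}^{D+b'H}(E),
\]
where $\widetilde{\operatorname{ch}}=(1+\tfrac{1}{12}\operatorname{ch}_{2}(\mathcal{N}))\operatorname{ch}$ and $c=\tfrac{H\cdot\operatorname{ch}_{2}(\mathcal{N})}{4H^{3}}$. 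This should reduce to Lemma \ref{lemma:chengFeyzbakhshInequality} when the tilt slope equals $b'$. To prove it I would use the standard trick: the set of $(b',w')$ on $\ell$ at which the object has slope equal to $b'$ is only the tangency configuration. So one instead proves the quadratic form version by rescaling $t'$ along the ray toward the parabola, exactly as in the proof of \cite{bayer2016}*{Theorem 4.2}. If that is too awkward, I would simply invoke the form of \cite{cheng2026} stated for general slope, since their Corollary 1.3 descends from a quadratic-form statement.

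\emph{Step 3: interpolation.} Expand $\operatorname{ch}^{D+b'H}=e^{-(b'-b)H}\operatorname{ch}^{D+bH}$ with $b'=b\pm s$. Then write the two inequalities from Step 2 at $p_{+}$ and $p_{-}$ in terms of $\operatorname{ch}^{D+bH}(E)$, using the relation $H\cdot\operatorname{ch}_{2}^{D+bH}(E)=t\,H^{3}\operatorname{ch}_{0}(E)$ coming from $\nu^{D}_{b,w}(E)=b$. Finally add the two inequalities, which cancels the odd-in-$s$ terms. The $s^{2}=2(t-t_{0})$ terms then trade the coefficient $3t$ for $\tfrac{t}{3}$ plus a multiple of $t_{0}$. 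The bookkeeping should produce exactly $\tfrac{t}{3}+\tfrac{8t_{0}}{3}+c$; the check $t=t_{0}$, which gives $3t_{0}$ back, is consistent. The main obstacle is Step 2: getting the correct general-slope form of the Cheng--Feyzbakhsh bound, so that the averaging in Step 3 closes with these precise constants. Strictness is inherited from Lemma \ref{lemma:chengFeyzbakhshInequality}.
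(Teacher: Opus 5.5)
There is a genuine gap: Steps 2 and 3 cannot produce the stated improvement. Step 1 itself is fine.

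Along the numerical line $\ell$ through $(b,w)$ of slope $b$, the tilt slope $\nu^{D}_{b',w'}(E)$ is constantly equal to $b$. So the slope-zero condition $\nu^{D}_{b',w'}(E)=b'$ holds on $\ell$ only at the tangency point $(b,w)$ itself. Hence the only thing Lemma~\ref{lemma:chengFeyzbakhshInequality} tells you about $E$, via its semistability along $\ell$, is the inequality at $(b,w)$: namely $\widetilde{\operatorname{ch}}_{3}^{D+bH}(E)<(3t+c)\,H^{2}\cdot\operatorname{ch}_{1}^{D+bH}(E)$. This is weaker than the claim whenever $t>t_{0}$.

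The general-slope form produced by the trick of Bayer--Macr\`i--Stellari, Theorem~4.2, is by construction the following: at a point $q$, it is the slope-zero inequality at the tangency point of the numerical line through $q$, rewritten in $q$-twisted characters. At $p_{\pm}$ that tangency point is again $(b,w)$. So adding the rewritten copies at $p_{+}$ and $p_{-}$ just returns the coefficient $3t+c$, not $\tfrac{t}{3}+\tfrac{8t_{0}}{3}+c$. Falling back on an unspecified stronger general-slope theorem of Cheng--Feyzbakhsh is not supported by Lemma~\ref{lemma:chengFeyzbakhshInequality}. The final constants in your Step~3 are asserted, not computed.

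The missing idea is to keep the slope condition and give up semistability. Let $t_{1}=w-\tfrac12 b^{2}$ for a counterexample $E$. Follow the curve $t\mapsto(b(t),w(t))$ on $[t_{0},t_{1}]$, where $w(t)=t+\tfrac12 b(t)^{2}$ and $b(t)$ is defined by
$H\cdot\operatorname{ch}_{2}^{D+b(t)H}(E)=tH^{3}\operatorname{ch}_{0}(E)$ and $H^{2}\cdot\operatorname{ch}_{1}^{D+b(t)H}(E)>0$. On this curve $\nu^{D}(E)=b(t)$ throughout. Set $\nabla(t)=\widetilde{\operatorname{ch}}_{3}^{D+b(t)H}(E)-(\tfrac{t}{3}+\tfrac{8t_{0}}{3}+c)H^{2}\cdot\operatorname{ch}_{1}^{D+b(t)H}(E)$. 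One computes
\[
\frac{d}{dt}\nabla(t)=-\frac{\Delta^{D}(E)+(8t_{0}+2c)(H^{3}\operatorname{ch}_{0}(E))^{2}}{3H^{2}\cdot\operatorname{ch}_{1}^{D+b(t)H}(E)}\leq 0 .
\]
The coefficient $\tfrac13$ is exactly what cancels the $t$-dependence and leaves $\Delta^{D}(E)\geq 0$. So the defect can only grow as $t$ decreases.

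Because $E$ may be destabilized along this curve, you argue with a counterexample of minimal $\Delta^{D}$. This is possible since $4\Delta^{D}$ is a nonnegative integer on semistable objects. Let $t_{*}$ be the least $t\in[t_{0},t_{1}]$ at which $E$ is still semistable; this set is closed by Lemma~\ref{lemma:wallProperties}(2).
\begin{itemize}
\item If $t_{*}=t_{0}$, Lemma~\ref{lemma:chengFeyzbakhshInequality} applies directly, because $\tfrac{t_{0}}{3}+\tfrac{8t_{0}}{3}=3t_{0}$.
\item If $t_{*}>t_{0}$, Lemma~\ref{lemma:wallProperties}(3) gives $0\to F\to E\to G\to 0$ with $F,G$ semistable of slope $b(t_{*})$ and smaller discriminant. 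The inequality holds for $F$ and $G$ by minimality. Additivity of $\nabla$ then contradicts $\nabla(t_{*})\geq\nabla(t_{1})\geq 0$.
\end{itemize}
This is the reduction-to-small-$\alpha$ argument of Section~5 of Bayer--Macr\`i--Stellari, which you cite at the outset but do not actually use.
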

 \begin{proof}
  For each $E\in D^{b}(X)$ and $(b,w)\in U$, define
  \begin{align*}
   \nabla_{b,w}^{\frac{1}{2}\operatorname{ch}_{1}(\mathcal{N})}(E) & =\operatorname{ch}_{3}^{\frac{1}{2}\operatorname{ch}_{1}(\mathcal{N})+b H}(E)+\frac{\operatorname{ch}_{2}(\mathcal{N})}{12}\cdot\operatorname{ch}_{1}^{\frac{1}{2}\operatorname{ch}_{1}(\mathcal{N})+b H}(E) \\
   & \qquad-\left(\frac{w-\tfrac{1}{2}b^{2}}{3}+\frac{8 t_{0}}{3}+\frac{H\cdot\operatorname{ch}_{2}(\mathcal{N})}{4 H^{3}}\right)H^{2}\cdot\operatorname{ch}_{1}^{\frac{1}{2}\operatorname{ch}_{1}(\mathcal{N})+b H}(E).
  \end{align*}
  For fixed $(b,w)\in U$, $\nabla_{b,w}^{\frac{1}{2}\operatorname{ch}_{1}(\mathcal{N})}$ is additive in short exact sequences.
  Our goal is to prove $\nabla_{b,w}^{\frac{1}{2}\operatorname{ch}_{1}(\mathcal{N})}(E)<0$ under the hypotheses of the lemma.

  We proceed by contradiction.
  Since $H,\operatorname{ch}_{1}(\mathcal{N})\in\operatorname{NS}(X)$, $4\Delta^{\frac{1}{2}\operatorname{ch}_{1}(\mathcal{N})}(E)\in\mathbb{Z}$ for all $E\in D^{b}(X)$.
  Moreover, by Lemma \ref{lemma:basicFactsTiltStability}, $\Delta^{\frac{1}{2}\operatorname{ch}_{1}(\mathcal{N})}(E)\geq 0$ for all $E$ satisfying the hypotheses of the lemma, so $\Delta^{\frac{1}{2}\operatorname{ch}_{1}(\mathcal{N})}$ takes values in the discrete set $\tfrac{1}{4}\mathbb{Z}_{\geq 0}$.
  Therefore, if the lemma fails then we may choose a counterexample with minimal discriminant.
  Explicitly, we may choose $(b_{1},w_{1})\in U$ with $w_{1}-\tfrac{1}{2}b_{1}^{2}\geq t_{0}$ and a $\nu_{b_{1},w_{1}}^{\frac{1}{2}\operatorname{ch}_{1}(\mathcal{N})}$-semistable $E\in\operatorname{Coh}_{b_{1}}^{\frac{1}{2}\operatorname{ch}_{1}(\mathcal{N})}(X)$ satisfying $\nu_{b_{1},w_{1}}^{\frac{1}{2}\operatorname{ch}_{1}(\mathcal{N})}(E)=b_{1}$, $\nabla_{b_{1},w_{1}}^{\frac{1}{2}\operatorname{ch}_{1}(\mathcal{N})}(E)\geq 0$, and $\Delta^{\frac{1}{2}\operatorname{ch}_{1}(\mathcal{N})}(E)$ minimal among all such counterexamples.
  For brevity, write $t_{1}=w_{1}-\frac{1}{2}b_{1}^{2}$.
  
  Our argument decreases $w-\tfrac{1}{2}b^{2}$ while preserving $\nu_{b,w}^{\frac{1}{2}\operatorname{ch}_{1}(\mathcal{N})}(E)=b$.
  To this end, we consider the smooth function $b:[t_{0},t_{1}]\to\mathbb{R}$ uniquely determined by the equations
  \begin{gather*}
   H\cdot\operatorname{ch}^{\frac{1}{2}\operatorname{ch}_{1}(\mathcal{N})+b(t)H}_{2}(E)=t H^{3}\operatorname{ch}_{0}(E) \\
   H^{2}\cdot\operatorname{ch}_{1}^{\frac{1}{2}\operatorname{ch}_{1}(\mathcal{N})+b(t)H}(E)>0.
  \end{gather*}
  Explicitly,
  \[
   b(t)=
   \begin{cases}
    \frac{H^{2}\cdot\operatorname{ch}_{1}^{\frac{1}{2}\operatorname{ch}_{1}(\mathcal{N})}(E)-\sqrt{\Delta^{\frac{1}{2}\operatorname{ch}_{1}(\mathcal{N})}(E)+2 t(H^{3}\operatorname{ch}_{0}(E))^{2}}}{H^{3}\operatorname{ch}_{0}(E)}: & \operatorname{ch}_{0}(E)\neq 0 \\
    \frac{H\cdot\operatorname{ch}_{2}^{\frac{1}{2}\operatorname{ch}_{1}(\mathcal{N})}(E)}{H^{2}\cdot\operatorname{ch}_{1}^{\frac{1}{2}\operatorname{ch}_{1}(\mathcal{N})}(E)}: & \operatorname{ch}_{0}(E)=0
   \end{cases}.
  \]
  In the first case the quantity inside the square root is positive because $\Delta^{\frac{1}{2}\operatorname{ch}_{1}(\mathcal{N})}(E)\geq 0$ and $t\geq t_{0}>0$.
  We also set $w(t)=t+\tfrac{1}{2}b(t)^{2}$.
  By construction, $(b(t_{1}),w(t_{1}))=(b_{1},w_{1})$ and $\nu_{b(t),w(t)}^{\frac{1}{2}\operatorname{ch}_{1}(\mathcal{N})}(E)=b(t)$.
  
  Since $\nu_{b(t),w(t)}^{\frac{1}{2}\operatorname{ch}_{1}(\mathcal{N})}(E)=b(t)$, we find
  \[
   H^{2}\cdot\operatorname{ch}_{1}^{\frac{1}{2}\operatorname{ch}_{1}(\mathcal{N})+b(t)H}(E)=\sqrt{\Delta^{\frac{1}{2}\operatorname{ch}_{1}(\mathcal{N})}(E)+2 t (H^{3}\operatorname{ch}_{0}(E))^{2}}.
  \]
  It follows that $H^{2}\cdot\operatorname{ch}_{1}^{\frac{1}{2}\operatorname{ch}_{1}(\mathcal{N})+b(t) H}(E)>0$ for all $t\in [t_{0},t_{1}]$.

  Differentiating the defining equation for $b(t)$ gives
  \[
   -H^{2}\cdot\operatorname{ch}_{1}^{\frac{1}{2}\operatorname{ch}_{1}(\mathcal{N})+b(t)H}(E) \frac{d}{dt} b(t)=H^{3}\operatorname{ch}_{0}(E)
  \]
  and so
  \[
   \frac{d}{dt} b(t)=-\frac{H^{3}\operatorname{ch}_{0}(E)}{H^{2}\cdot\operatorname{ch}_{1}^{\frac{1}{2}\operatorname{ch}_{1}(\mathcal{N})+b(t)H}(E)}.
  \]
  Hence
  \begin{align*}
   \frac{d}{dt} \nabla_{b(t),w(t)}^{\frac{1}{2}\operatorname{ch}_{1}(\mathcal{N})}(E) & \stackrel{(1)}{=}-\frac{\Delta^{\frac{1}{2}\operatorname{ch}_{1}(\mathcal{N})}(E)+\left(8t_{0}+\frac{H\cdot\operatorname{ch}_{2}(\mathcal{N})}{2 H^{3}}\right)(H^{3}\operatorname{ch}_{0}(E))^{2}}{3 H^{2}\cdot\operatorname{ch}_{1}^{\frac{1}{2}\operatorname{ch}_{1}(\mathcal{N})+b(t)H}(E)} \\
    & \stackrel{(2)}{\leq} -\frac{\left(8 t_{0}+\frac{H\cdot\operatorname{ch}_{2}(\mathcal{N})}{2 H^{3}}\right)\left(H^{3}\operatorname{ch}_{0}(E)\right)^{2}}{3 H^{2}\cdot\operatorname{ch}_{1}^{\frac{1}{2}\operatorname{ch}_{1}(\mathcal{N})+b(t)H}(E)} \\
    & \stackrel{(3)}{\leq} 0.
  \end{align*}
  The numbered steps are justified as follows.
  \begin{enumerate}
   \item{
    Direct calculation,
   }
   \item{
    $\Delta^{\frac{1}{2}\operatorname{ch}_{1}(\mathcal{N})}(E)\geq 0$ and $H^{2}\cdot\operatorname{ch}_{1}^{\frac{1}{2}\operatorname{ch}_{1}(\mathcal{N})+b(t)H}(E)>0$, and
   }
   \item{
    Since $t_{0}+\frac{H\cdot\operatorname{ch}_{2}(\mathcal{N})}{12 H^{3}}>\frac{k^{2} a(X,kH)^{2}}{6}\geq 0$ and $t_{0}>0$, $8t_{0}+\frac{H\cdot\operatorname{ch}_{2}(\mathcal{N})}{2 H^{3}}=6\left(t_{0}+\frac{H\cdot\operatorname{ch}_{2}(\mathcal{N})}{12 H^{3}}\right)+2t_{0}>0$.
   }
  \end{enumerate}
  Thus, $t\mapsto\nabla_{b(t),w(t)}^{\frac{1}{2}\operatorname{ch}_{1}(\mathcal{N})}$ is nonincreasing on $[t_{0},t_{1}]$.
  
  Consider the set
  \[
   S_{E}=\{t\in [t_{0},t_{1}]:E\in\operatorname{Coh}_{b(t)}^{\frac{1}{2}\operatorname{ch}_{1}(\mathcal{N})}(X), E \ \text{is} \ \nu_{b(t),w(t)}^{\frac{1}{2}\operatorname{ch}_{1}(\mathcal{N})}-\text{semistable}\}.
  \]
  Since $H^{2}\cdot\operatorname{ch}_{1}^{\frac{1}{2}\operatorname{ch}_{1}(\mathcal{N})+b(t) H}(E)>0$ for all $t\in [t_{0},t_{1}]$, $t\mapsto (b(t),w(t))$ is a continuous function $[t_{0},t_{1}]\to U_{[E]}^{\frac{1}{2}\operatorname{ch}_{1}(\mathcal{N})}$.
  By Lemma \ref{lemma:wallProperties}, $U_{E}^{\frac{1}{2}\operatorname{ch}_{1}(\mathcal{N}),\mathrm{ss}}$ is closed in $U_{[E]}^{\frac{1}{2}\operatorname{ch}_{1}(\mathcal{N})}$ and so $S_{E}$ is closed in $[t_{0},t_{1}]$.
  Since $t_{1}\in S_{E}$, $S_{E}$ is nonempty and so $t_{*}=\min S_{E}$ exists.
  Since $t\mapsto\nabla_{b(t),w(t)}^{\frac{1}{2}\operatorname{ch}_{1}(\mathcal{N})}$ is nonincreasing on $[t_{0},t_{1}]$, $0\leq \nabla_{b_{1},w_{1}}^{\frac{1}{2}\operatorname{ch}_{1}(\mathcal{N})}(E)\leq \nabla_{b(t_{*}),w(t_{*})}^{\frac{1}{2}\operatorname{ch}_{1}(\mathcal{N})}(E)$.
  
  If $t_{*}=t_{0}$ then $\tfrac{1}{3}t_{*}+\tfrac{8}{3}t_{0}=3t_{*}$, so, by Lemma \ref{lemma:chengFeyzbakhshInequality}, $\nabla^{\frac{1}{2}\operatorname{ch}_{1}(\mathcal{N})}_{b(t_{0}),w(t_{0})}(E)<0$ which is a contradiction.
  If $t_{*}>t_{0}$ then $(b(t_{*}),w(t_{*}))\in \partial U^{\frac{1}{2}\operatorname{ch}_{1}(\mathcal{N}),\mathrm{ss}}_{E}$.
  Therefore, by Lemma \ref{lemma:wallProperties}, there exists a short exact sequence
  \[
   0\to F\to E\to G\to 0
  \]
  in $\operatorname{Coh}_{b(t_{*})}^{\frac{1}{2}\operatorname{ch}_{1}(\mathcal{N})}(X)$ where $F,G$ are nonzero $\nu_{b(t_{*}),w(t_{*})}^{\frac{1}{2}\operatorname{ch}_{1}(\mathcal{N})}$-semistable objects with $\nu_{b(t_{*}),w(t_{*})}^{\frac{1}{2}\operatorname{ch}_{1}(\mathcal{N})}(F)=\nu_{b(t_{*}),w(t_{*})}^{\frac{1}{2}\operatorname{ch}_{1}(\mathcal{N})}(E)=b(t_{*})$,$\nu_{b(t_{*}),w(t_{*})}^{\frac{1}{2}\operatorname{ch}_{1}(\mathcal{N})}(G)=\nu_{b(t_{*}),w(t_{*})}^{\frac{1}{2}\operatorname{ch}_{1}(\mathcal{N})}(E)=b(t_{*})$,  and
  \[
   0\leq\Delta^{\frac{1}{2}\operatorname{ch}_{1}(\mathcal{N})}(F),\Delta^{\frac{1}{2}\operatorname{ch}_{1}(\mathcal{N})}(G)<\Delta^{\frac{1}{2}\operatorname{ch}_{1}(\mathcal{N})}(E).
  \]
  Since $E$ is a counterexample with minimal $\Delta^{\frac{1}{2}\operatorname{ch}_{1}(\mathcal{N})}$, the conclusion of the lemma holds for $F$ and $G$.
  Therefore, by additivity, we find
  \[
   \nabla_{b(t_{*}),w(t_{*})}^{\frac{1}{2}\operatorname{ch}_{1}(\mathcal{N})}(E)=\nabla_{b(t_{*}),w(t_{*})}^{\frac{1}{2}\operatorname{ch}_{1}(\mathcal{N})}(F)+\nabla_{b(t_{*}),w(t_{*})}^{\frac{1}{2}\operatorname{ch}_{1}(\mathcal{N})}(G)<0
  \]
  a contradiction.
 \end{proof}
 
 \begin{theorem}
  \label{theorem:asymptoticGammaBMT}
  Let $(X,H)$ be a smooth polarized complex threefold.
  Set $D=\tfrac{1}{2}K_{X}$.
  There exist $t_{0}>0$ and $\Gamma\in N^{2}(X)_{\mathbb{Q}}$ such that every $\nu_{b,w}^{D}$-semistable $E\in\operatorname{Coh}_{b}^{D}(X)$ with $w-\tfrac{1}{2}b^{2}\geq t_{0}$ and $\nu_{b,w}^{D}(E)=b$ satisfies
  \[
   \operatorname{ch}_{3}^{D+b H}(E)\leq
  \frac{w-\frac{1}{2}b^{2}}{3}H^{2}\cdot\operatorname{ch}_{1}^{D+bH}(E)+\Gamma\cdot\operatorname{ch}_{1}^{D+bH}(E).
   \]
 \end{theorem}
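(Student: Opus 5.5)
The plan is to deduce Theorem \ref{theorem:asymptoticGammaBMT} from Lemma \ref{lemma:mainTheoremText} by a change of twisting divisor. The key input is adjunction for the embedding $\iota:X\to\mathbb{P}^{N}$. We have $K_{X}=\iota^{*}K_{\mathbb{P}^{N}}+\det\mathcal{N}=-(N+1)kH+\operatorname{ch}_{1}(\mathcal{N})$ in $\operatorname{NS}(X)$. Hence
\[
 \tfrac{1}{2}\operatorname{ch}_{1}(\mathcal{N})=D+cH,\qquad c=\tfrac{(N+1)k}{2}\in\mathbb{Q},\qquad D=\tfrac{1}{2}K_{X}.
\]
Consequently $\operatorname{ch}^{\frac{1}{2}\operatorname{ch}_{1}(\mathcal{N})+bH}=\operatorname{ch}^{D+(b+c)H}$ for every $b$. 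Everything then reduces to showing that the two tilt stability conditions match under the shift $b\mapsto b+c$ with $w-\tfrac{1}{2}b^{2}$ held fixed.

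First I check that the hearts agree. Since $\mu_{b}^{D+cH}=\mu_{b+c}^{D}$, the torsion pairs coincide, so $\operatorname{Coh}_{b}^{\frac{1}{2}\operatorname{ch}_{1}(\mathcal{N})}(X)=\operatorname{Coh}_{b+c}^{D}(X)$.

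Next I compare slopes. Write $t=w-\tfrac{1}{2}b^{2}$ and expand $\operatorname{ch}^{D'}=e^{bH}\operatorname{ch}^{D'+bH}$ for any $D'$. This gives
\[
 \nu_{b,w}^{D'}(E)-b=\frac{H\cdot\operatorname{ch}_{2}^{D'+bH}(E)-tH^{3}\operatorname{ch}_{0}(E)}{H^{2}\cdot\operatorname{ch}_{1}^{D'+bH}(E)},
\]
with the usual convention when the denominator vanishes. Apply this with $D'=\tfrac{1}{2}\operatorname{ch}_{1}(\mathcal{N})$ at $(b,w)$ and with $D'=D$ at $(b',w')=(b+c,\,t+\tfrac{1}{2}(b+c)^{2})$. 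The right-hand sides are literally equal, so $\nu^{\frac{1}{2}\operatorname{ch}_{1}(\mathcal{N})}_{b,w}=\nu^{D}_{b',w'}-c$ on every object. Since the two slope functions differ by a constant on the same heart, semistability is the same. Moreover $\nu^{D}_{b',w'}(E)=b'$ holds if and only if $\nu^{\frac{1}{2}\operatorname{ch}_{1}(\mathcal{N})}_{b,w}(E)=b$, and $w'-\tfrac{1}{2}b'^{2}=t$. Since $(b,w)\mapsto(b',w')$ is a bijection of $U$, every $E$ as in the theorem satisfies the hypotheses of Lemma \ref{lemma:mainTheoremText}.

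Finally I fix $t_{0}$ as in Lemma \ref{lemma:chengFeyzbakhshInequality}, choosing it rational, which is possible because the condition on $t_{0}$ is a strict lower bound. I then rewrite the conclusion of Lemma \ref{lemma:mainTheoremText} using $\operatorname{ch}^{\frac{1}{2}\operatorname{ch}_{1}(\mathcal{N})+bH}=\operatorname{ch}^{D+b'H}$. This gives
\[
 \operatorname{ch}_{3}^{D+b'H}(E)<\frac{t}{3}H^{2}\cdot\operatorname{ch}_{1}^{D+b'H}(E)+\Gamma\cdot\operatorname{ch}_{1}^{D+b'H}(E),\qquad
 \Gamma=\Bigl(\frac{8t_{0}}{3}+\frac{H\cdot\operatorname{ch}_{2}(\mathcal{N})}{4H^{3}}\Bigr)H^{2}-\frac{\operatorname{ch}_{2}(\mathcal{N})}{12}.
\]
Here $\Gamma\in N^{2}(X)_{\mathbb{Q}}$ because $t_{0}$ is rational and $\mathcal{N}$ has rational Chern classes. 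The strict inequality implies the stated non-strict one.

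The only real obstacle is the bookkeeping in the comparison of the two tilt stabilities. One must confirm that the hearts coincide exactly, including the boundary case $\mu=0$ in $\mathcal{F}$, and that the slope identity also respects the $+\infty$ convention. Both follow from the displayed identities, since the denominators $H^{2}\cdot\operatorname{ch}_{1}^{D'+bH}(E)$ agree under the shift.
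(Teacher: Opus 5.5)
Your proposal is correct and follows the paper's proof. Like the paper, you choose a rational $t_{0}$ as in Lemma~\ref{lemma:chengFeyzbakhshInequality}, use adjunction $\operatorname{ch}_{1}(\mathcal{N})=K_{X}+(N+1)kH$ to identify $\tfrac{1}{2}\operatorname{ch}_{1}(\mathcal{N})+bH$ with $D+\bigl(b+\tfrac{1}{2}(N+1)k\bigr)H$, and transport Lemma~\ref{lemma:mainTheoremText} along this shift of $b$ with $w-\tfrac{1}{2}b^{2}$ held fixed, arriving at the same $\Gamma$. Your explicit check that the hearts and tilt slopes match under the shift is accurate; the paper leaves that step implicit.
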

 \begin{proof}
  Choose $t_{0}$ as in Lemma \ref{lemma:chengFeyzbakhshInequality} and increase it, if necessary, so that $t_{0}\in\mathbb{Q}$.
  Set
  \[
   \Gamma=\left(\frac{8 t_{0}}{3}+\frac{H\cdot\operatorname{ch}_{2}(\mathcal{N})}{4 H^{3}}\right)H^{2}-\frac{\operatorname{ch}_{2}(\mathcal{N})}{12}\in N^{2}(X)_{\mathbb{Q}}
  \]
  Since $\operatorname{ch}_{1}(\mathcal{N})=K_{X}+(N+1)kH$, the claim follows from Lemma \ref{lemma:mainTheoremText} after replacing $b$ by $b-\tfrac{1}{2}(N+1)k$ and keeping $w-\tfrac{1}{2}b^{2}$ fixed.
 \end{proof}

\section{The wBG condition for certain polarized threefolds}
 In this section we apply Theorem \ref{theorem:asymptoticGammaBMT} to show that any smooth polarized complex threefold with $K_{X}\equiv\lambda H$ for some $\lambda\in\mathbb{Q}$ satisfies $(\mathbf{wBG})$ of \cite{feyzbakhsh2024}.

 We assume $(X,H)$ is a smooth polarized complex threefold.
 From now on, we also assume $D=0$.
 Therefore, we omit the superscript $0$ from our notation, writing $\mu_{b}$, $\nu_{b,w}$, $U_{v}$, and $\operatorname{Coh}_{b}(X)$ rather than $\mu_{b}^{0}$, $\nu_{b,w}^{0}$ , $U_{v}^{0}$, and $\operatorname{Coh}_{b}^{0}(X)$ respectively.
 
\subsection{The wBG condition} 
 Fix $v\in K_{\mathrm{num}}(X)$ with $\operatorname{ch}_{0}(v)>0$ and write $v_{n}=v-[\mathcal{O}_{X}(-nH)]$.
 For all $n\gg 0$, we write $\ell_{v,n}$ for the numerical line of $v_{n}$ defined in \cite{feyzbakhsh2024}*{Section 3.1}.
 Explicitly, for $n\gg 0$ consider all numerical lines $\ell$ for $v_{n}$ satisfying
 \begin{gather*}
  b_{1}(\ell)\leq -n+\epsilon \\
  \mu_{0}(v)-\epsilon\leq b_{2}(\ell)<\mu_{0}(v).
 \end{gather*}
 where $\epsilon=1/(4H^{3}\operatorname{ch}_{0}(v)^{2})$.
 For all $n\gg 0$, we define $\ell_{v,n}$ to be the numerical line satisfying these inequalities with maximal $b_{1}(\ell)$.
 
 We record a lower bound on $b_{1}(\ell_{v,n})$ that will be useful later.
 Indeed, for $n\gg 0$ consider a numerical line, $\ell_{n}$, with $b_{1}(\ell_{n})=-n+\tfrac{1}{2}\epsilon$.
 By Lemma \ref{lemma:factsAboutNumericalLines}(1), we find
 \[
  \lim_{n\to\infty}b_{2}(\ell_{n})=\mu_{0}(v)-\frac{\epsilon}{2\operatorname{ch}_{0}(v)}.
 \]
 In particular, $\mu_{0}(v)-\epsilon<b_{2}(\ell_{n})<\mu_{0}(v)$ for all $n\gg 0$.

 The maximal choice of $\ell_{v,n}$ therefore gives
 \begin{equation}
  \label{equation:lowerBoundLeftEndpoint}
  -n+\frac{\epsilon}{2}\leq b_{1}(\ell_{v,n}).
 \end{equation}
 
 \begin{definition}
  \label{definition:wBG}
  We say that $(X,H)$ satisfies $(\mathbf{wBG})$ if for every $v\in K_{\mathrm{num}}(X)$ with $\operatorname{ch}_{0}(v)\geq 1$, all $n\gg 0$ (depending on $v$), and all $(b,w)\in \ell_{v,n}\cap U$ there is no $\nu_{b,w}$-semistable object $E\in\operatorname{Coh}_{b}(X)$ with $[E]=v_{n}$.
 \end{definition}
 
 By Lemma \ref{lemma:factsAboutNumericalLines}(2), $\ell_{v,n}\cap U_{v_{n}}=\ell_{v,n}\cap U$.
 We will tacitly use this equality when discussing $(\mathbf{wBG})$.

\subsection{Verifying the wBG condition}
 Assume $K_{X}\equiv\lambda H$ for some $\lambda\in\mathbb{Q}$.
 We show that $(X,H)$ satisfies $(\mathbf{wBG})$ under this assumption.

 Recall that Lemma \ref{lemma:mainTheoremText} is proved for $D=\frac{1}{2}\operatorname{ch}_{1}(\mathcal{N})$ rather than $D=0$.
 The following result translates Lemma \ref{lemma:mainTheoremText} to the $D=0$ case.
 
 \begin{corollary}
  \label{bgInequalityProportionalH}
  Assume $(X,H)$ is a smooth polarized threefold with $K_{X}\equiv\lambda H$ for some $\lambda\in\mathbb{Q}$.
  Choose $t_{0}$ as in Lemma \ref{lemma:chengFeyzbakhshInequality}.
  There exists a constant $M>0$, depending only on $(X,H)$, such that if $(b,w)\in U$ satisfies $w-\tfrac{1}{2} b^{2}\geq t_{0}$ then every $\nu_{b,w}$-semistable $E\in\operatorname{Coh}_{b}(X)$ with $\nu_{b,w}(E)=b$ satisfies
  \[
   \operatorname{ch}_{3}^{b H}(E)\leq\left(\frac{w-\tfrac{1}{2}b^{2}}{3}+M\right) H^{2}\cdot\operatorname{ch}_{1}^{b H}(E).
  \]
 \end{corollary}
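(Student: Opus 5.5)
The plan is to deduce the corollary from Lemma \ref{lemma:mainTheoremText} by a change of variables. Write $D_{\mathcal{N}}=\tfrac{1}{2}\operatorname{ch}_{1}(\mathcal{N})$. Since $\operatorname{ch}_{1}(\mathcal{N})=K_{X}+(N+1)kH$ and $K_{X}\equiv\lambda H$, we have $D_{\mathcal{N}}\equiv cH$ numerically, with $c=\tfrac{1}{2}(\lambda+(N+1)k)\in\mathbb{Q}$. All the notions involved (the slope $\mu$, the heart, the tilt slope $\nu$, and the twisted Chern characters) depend only on numerical classes. Hence $\operatorname{ch}^{D_{\mathcal{N}}+bH}=\operatorname{ch}^{(b+c)H}$ on $K_{\mathrm{num}}(X)$.

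The first step is to check that the $D_{\mathcal{N}}$-picture at parameters $(b-c,w')$ matches the $D=0$ picture at $(b,w)$, where $w'$ is chosen so that $w'-\tfrac{1}{2}(b-c)^{2}=w-\tfrac{1}{2}b^{2}$.
\begin{itemize}
\item Since $\mu_{b-c}^{D_{\mathcal{N}}}=\mu_{b}$, the torsion pairs agree and $\operatorname{Coh}_{b-c}^{D_{\mathcal{N}}}(X)=\operatorname{Coh}_{b}(X)$.
\item A direct expansion should give $\nu_{b-c,w'}^{D_{\mathcal{N}}}(E)-(b-c)=\nu_{b,w}(E)-b$. Here we use $\operatorname{ch}^{D_{\mathcal{N}}+(b-c)H}=\operatorname{ch}^{bH}$, together with the fact that $\nu_{b,w}-b$ can be written as $(H\cdot\operatorname{ch}_{2}^{bH}-(w-\tfrac{1}{2}b^{2})H^{3}\operatorname{ch}_{0})/H^{2}\cdot\operatorname{ch}_{1}^{bH}$.
\item It follows that $\nu_{b-c,w'}^{D_{\mathcal{N}}}$ is an affine reparameterization of $\nu_{b,w}$ with positive leading coefficient, so the semistable objects coincide. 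Moreover, $\nu_{b,w}(E)=b$ is equivalent to $\nu_{b-c,w'}^{D_{\mathcal{N}}}(E)=b-c$.
\item The hypothesis $w-\tfrac{1}{2}b^{2}\ge t_{0}$ is preserved.
\end{itemize}
This is exactly the substitution used in the proof of Theorem \ref{theorem:asymptoticGammaBMT}, now with $K_{X}$ also absorbed.

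Next, apply Lemma \ref{lemma:mainTheoremText} to $E$ at $(b-c,w')$ and rewrite both sides using $\operatorname{ch}^{D_{\mathcal{N}}+(b-c)H}(E)=\operatorname{ch}^{bH}(E)$. This gives
\[
\operatorname{ch}_{3}^{bH}(E)<\left(\frac{w-\frac{1}{2}b^{2}}{3}+\frac{8t_{0}}{3}+\frac{H\cdot\operatorname{ch}_{2}(\mathcal{N})}{4H^{3}}\right)H^{2}\cdot\operatorname{ch}_{1}^{bH}(E)-\frac{\operatorname{ch}_{2}(\mathcal{N})}{12}\cdot\operatorname{ch}_{1}^{bH}(E).
\]

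The main obstacle is the last term. The class $\operatorname{ch}_{2}(\mathcal{N})$ need not be proportional to $H^{2}$, and $\operatorname{ch}_{1}^{bH}(E)$ is an arbitrary divisor class. The only positivity available is $H^{2}\cdot\operatorname{ch}_{1}^{bH}(E)\ge 0$ from the heart. I would try to bound $-\tfrac{1}{12}\operatorname{ch}_{2}(\mathcal{N})\cdot\operatorname{ch}_{1}^{bH}(E)$ by a multiple of $H^{2}\cdot\operatorname{ch}_{1}^{bH}(E)$, as follows.

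\begin{itemize}
\item First, try to pick a curve class $\gamma$ with $\gamma-\operatorname{ch}_{2}(\mathcal{N})/12$ a multiple of $H^{2}$. This works only if $\operatorname{ch}_{2}(\mathcal{N})$ is numerically proportional to $H^{2}$.
\item In general, I expect to need that $\operatorname{ch}_{1}^{bH}(E)$ lies in a region where such a comparison holds. For instance, one could use a Bogomolov-type control of the non-$H$ part of $\operatorname{ch}_{1}$ via the Hodge index theorem applied to the HN factors.
\item Alternatively, and more plausibly, one can compute $\operatorname{ch}_{2}(\mathcal{N})$ directly. From $0\to T_{X}\to T_{\mathbb{P}^{N}}|_{X}\to\mathcal{N}\to0$ we get $\operatorname{ch}(\mathcal{N})=(N+1)e^{kH}-1-\operatorname{ch}(T_{X})$. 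So $\operatorname{ch}_{2}(\mathcal{N})=\tfrac{(N+1)k^{2}}{2}H^{2}-\operatorname{ch}_{2}(T_{X})$, and the troublesome piece is $\operatorname{ch}_{2}(T_{X})=\tfrac{1}{2}K_{X}^{2}-c_{2}(X)$.
\item The $K_{X}^{2}$ part is proportional to $H^{2}$ by hypothesis, leaving $c_{2}(X)\cdot\operatorname{ch}_{1}^{bH}(E)$.
\end{itemize}

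For that remaining term, I would decompose $\operatorname{ch}_{1}^{bH}(E)=\frac{H^{2}\cdot\operatorname{ch}_{1}^{bH}(E)}{H^{3}}H+\xi$ with $H^{2}\cdot\xi=0$. I would then try to absorb $c_{2}(X)\cdot\xi$ into $M\,H^{2}\cdot\operatorname{ch}_{1}^{bH}(E)$, using Hodge index together with the discriminant bound $\Delta(E)\ge0$. If this proves impossible in general, I would settle for establishing the corollary whenever the $\xi$-contribution can be bounded. This is the step I expect to require the most care.
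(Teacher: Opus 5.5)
\textbf{There is a genuine gap, at the step you yourself flag as the crux.}

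Your change of variables is correct and is essentially what the paper does. The paper passes through Theorem \ref{theorem:asymptoticGammaBMT} and then shifts $b$ by $\tfrac{1}{2}\lambda$; your leftover term $-\tfrac{1}{12}\operatorname{ch}_2(\mathcal{N})\cdot\operatorname{ch}_1^{bH}(E)$ is the non-$H^2$ part of the paper's $\Gamma$.

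The decisive step is bounding this term by $M\,H^2\cdot\operatorname{ch}_1^{bH}(E)$. You leave it open, and you concede you might only prove the corollary conditionally. None of the tools you list can close it:
\begin{itemize}
\item The discriminant $\Delta(E)$ of the paper involves only $\operatorname{ch}_0$, $H^2\cdot\operatorname{ch}_1$ and $H\cdot\operatorname{ch}_2$. It is therefore completely blind to the component $\xi$ of $\operatorname{ch}_1^{bH}(E)$ in $H^{\perp}$.
\item The naive full Bogomolov inequality $H\cdot(\operatorname{ch}_1^2-2\operatorname{ch}_0\operatorname{ch}_2)\geq 0$ is not available for tilt-semistable objects. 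Torsion sheaves $\mathcal{O}_S$ on divisors with $H\cdot S^2<0$ are the typical obstruction.
\item Computing $\operatorname{ch}_2(\mathcal{N})$ only moves the problem to $c_2(X)\cdot\xi$. The hypothesis $K_X\equiv\lambda H$ says nothing about $c_2(X)$, which is typically not proportional to $H^2$.
\end{itemize}
Your remark that ``the only positivity available is $H^2\cdot\operatorname{ch}_1^{bH}(E)\geq 0$'' is exactly where the argument goes wrong.

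\textbf{The missing ingredient} is the Bogomolov-type inequality with correction term from Lemma 3.3 and Theorem 3.5 of \cite{bayer2016}. There is a constant $C_H\geq 0$, depending only on $H$, such that your $E$ satisfies
\[
H\cdot(\operatorname{ch}_1^{bH}(E))^2\geq -C_H(H^2\cdot\operatorname{ch}_1^{bH}(E))^2 .
\]
If you use it in the form $H\cdot(\operatorname{ch}_1^2-2\operatorname{ch}_0\operatorname{ch}_2)(E)+C_H(H^2\cdot\operatorname{ch}_1^{bH}(E))^2\geq 0$, the $\operatorname{ch}_0\operatorname{ch}_2$ term has a favorable sign. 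This is because $\nu_{b,w}(E)=b$ gives $H\cdot\operatorname{ch}_2^{bH}(E)=(w-\tfrac{1}{2}b^2)H^3\operatorname{ch}_0(E)$.

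Now use your decomposition $\operatorname{ch}_1^{bH}(E)=aH+\xi$ with $a=H^2\cdot\operatorname{ch}_1^{bH}(E)/H^3$. Here $a>0$, since $\nu_{b,w}(E)=b$ is finite. The inequality then reads $-H\cdot\xi^2\leq (H^3+C_H(H^3)^2)a^2$.

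By the Hodge index theorem, $-H\cdot\xi^2$ is a positive definite quadratic form on $H^{\perp}\subset N^1(X)_{\mathbb{R}}$. Hence for any fixed $\Gamma\in N^2(X)_{\mathbb{R}}$ you get $|\Gamma\cdot\xi|\leq K a$, with $K$ depending only on $(X,H,\Gamma)$. This produces $M$ for the whole class $\Gamma$ at once, with no need to compute $\operatorname{ch}_2(\mathcal{N})$. The paper phrases the same step as compactness of $\{B: H^2\cdot B=1,\ H\cdot B^2\geq -C_H\}$.
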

 \begin{proof}
  Since $\tfrac{1}{2}K_{X}\equiv\tfrac{1}{2}\lambda H$, applying Theorem \ref{theorem:asymptoticGammaBMT} at $(b-\tfrac{1}{2}\lambda,w-\tfrac{1}{2}\lambda b+\tfrac{1}{8}\lambda^{2})$ shows
  \[
   \operatorname{ch}_{3}^{b H}(E)\leq\frac{w-\frac{1}{2}b^{2}}{3}H^{2}\cdot\operatorname{ch}_{1}^{b H}(E)+\Gamma\cdot\operatorname{ch}_{1}^{b H}(E).
  \]
  By \cite{bayer2016}*{Lemma 3.3 and Theorem 3.5}, there exists a constant $C_{H}\geq 0$, depending only on $H$, such that
  \[
   H\cdot\left(\operatorname{ch}_{1}^{b H}(E)\right)^{2}\geq - C_{H}\left(H^{2}\cdot\operatorname{ch}_{1}^{b H}(E)\right)^{2}.
  \]
  On the other hand, by the Hodge index theorem,
  \[
   \{B\in N^{1}(X)_{\mathbb{R}}:H^{2}\cdot B=1,H\cdot B^{2}\geq -C_{H}\}
  \]
  is compact.
  By the preceding inequality, $\operatorname{ch}_{1}^{b H}(E)/(H^{2}\cdot\operatorname{ch}_{1}^{bH}(E))$ lies in this compact set.
  Therefore, there exists $M>0$, depending only on $(X,H)$, such that
  \[
   \Gamma\cdot\operatorname{ch}_{1}^{b H}(E)\leq M H^{2}\cdot\operatorname{ch}_{1}^{b H}(E).
  \]
  Substituting this bound into our first inequality gives the claim.
 \end{proof}
 
 We translate Corollary \ref{bgInequalityProportionalH} into an obstruction to certain numerical lines (see Definition \ref{definition:numericalLine}).
 
 \begin{lemma}
  \label{lemma:numericalWallBound}
  Assume that $\ell$ is a numerical line for $v$ and that there exists $\nu_{b,w}$-semistable $E\in\operatorname{Coh}_{b}(X)$ with $[E]=v$ for some $(b,w)\in\ell\cap U$.
  Choose $t_{0}$ as in Lemma \ref{lemma:chengFeyzbakhshInequality}.
  If $(b_{2}(\ell)-b_{1}(\ell))^{2}\geq 8 t_{0}$ then
  \[
   \operatorname{ch}_{3}(E)-\frac{b_{1}(\ell)+b_{2}(\ell)}{3}H\cdot\operatorname{ch}_{2}(E)+\frac{b_{1}(\ell)b_{2}(\ell)}{6}H^{2}\cdot\operatorname{ch}_{1}(E)\leq M\left(H^{2}\cdot\operatorname{ch}_{1}(E)-\frac{b_{1}(\ell)+b_{2}(\ell)}{2}H^{3}\operatorname{ch}_{0}(E)\right).
  \]
  where $M$ is the constant of Corollary \ref{bgInequalityProportionalH}.
 \end{lemma}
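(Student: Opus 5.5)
Set $b_{*}=\tfrac{1}{2}(b_{1}(\ell)+b_{2}(\ell))$ and $w_{*}=\tfrac{1}{2}b_{1}(\ell)b_{2}(\ell)$. The plan is to apply Corollary \ref{bgInequalityProportionalH} at one well-chosen point, namely the point $(b_{*},w)\in\ell$ where $\nu_{b_{*},w}(E)=b_{*}$, and then expand the resulting inequality back into untwisted Chern characters.

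\emph{Locating the point.} By the proof of Lemma \ref{lemma:factsAboutNumericalLines}(1), $\ell$ is the line $w=b_{*}b-w_{*}$. Its point over $b=b_{*}$ is $(b_{*},\,b_{*}^{2}-w_{*})$, and there
\[
w-\tfrac{1}{2}b_{*}^{2}=\tfrac{1}{2}b_{*}^{2}-w_{*}=\tfrac{1}{8}(b_{2}(\ell)-b_{1}(\ell))^{2}\geq t_{0}>0.
\]
So this point lies in $U$. By Lemma \ref{lemma:factsAboutNumericalLines}(4), $E\in\operatorname{Coh}_{b_{*}}(X)$ and $E$ is $\nu_{b_{*},b_{*}^{2}-w_{*}}$-semistable.

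\emph{The slope condition.} We need $\nu_{b_{*},w}(E)=b_{*}$, which is equivalent to
\[
H\cdot\operatorname{ch}_{2}(E)-wH^{3}\operatorname{ch}_{0}(E)=b_{*}\bigl(H^{2}\cdot\operatorname{ch}_{1}(E)-b_{*}H^{3}\operatorname{ch}_{0}(E)\bigr).
\]
Substituting $w=b_{*}^{2}-w_{*}$, this becomes $H\cdot\operatorname{ch}_{2}(E)=b_{*}H^{2}\cdot\operatorname{ch}_{1}(E)-w_{*}H^{3}\operatorname{ch}_{0}(E)$. That is precisely Lemma \ref{lemma:factsAboutNumericalLines}(1). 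The case $\operatorname{ch}_{0}=0$ is also covered by (1). Hence Corollary \ref{bgInequalityProportionalH} applies with $w-\tfrac{1}{2}b^{2}=\tfrac{1}{2}b_{*}^{2}-w_{*}$, giving
\[
\operatorname{ch}_{3}^{b_{*}H}(E)\leq\Bigl(\tfrac{1}{3}(\tfrac{1}{2}b_{*}^{2}-w_{*})+M\Bigr)H^{2}\cdot\operatorname{ch}_{1}^{b_{*}H}(E).
\]

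\emph{Expanding.} Write $c_{i}=H^{3-i}\cdot\operatorname{ch}_{i}(E)$. Then
\[
\operatorname{ch}_{3}^{b_{*}H}=\operatorname{ch}_{3}-b_{*}c_{2}+\tfrac{1}{2}b_{*}^{2}c_{1}-\tfrac{1}{6}b_{*}^{3}c_{0},\qquad H^{2}\cdot\operatorname{ch}_{1}^{b_{*}H}=c_{1}-b_{*}c_{0}.
\]
The $M$-term is already the right-hand side of the lemma. It remains to show that
\[
\operatorname{ch}_{3}^{b_{*}H}-\tfrac{1}{3}(\tfrac{1}{2}b_{*}^{2}-w_{*})(c_{1}-b_{*}c_{0})
\]
equals $\operatorname{ch}_{3}-\tfrac{2b_{*}}{3}c_{2}+\tfrac{w_{*}}{3}c_{1}$. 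For this, eliminate $c_{2}$ partially using the relation $c_{2}=b_{*}c_{1}-w_{*}c_{0}$: replace $-b_{*}c_{2}$ by $-\tfrac{2}{3}b_{*}c_{2}-\tfrac{1}{3}b_{*}(b_{*}c_{1}-w_{*}c_{0})$.

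Collecting coefficients then gives:
\begin{itemize}
\item for $c_{1}$: $\tfrac{1}{2}b_{*}^{2}-\tfrac{1}{3}b_{*}^{2}-\tfrac{1}{6}b_{*}^{2}+\tfrac{1}{3}w_{*}=\tfrac{1}{3}w_{*}$;
\item for $c_{0}$: $\tfrac{1}{3}b_{*}w_{*}-\tfrac{1}{6}b_{*}^{3}+\tfrac{1}{6}b_{*}^{3}-\tfrac{1}{3}b_{*}w_{*}=0$.
\end{itemize}
With $\tfrac{2b_{*}}{3}=\tfrac{b_{1}+b_{2}}{3}$ and $\tfrac{w_{*}}{3}=\tfrac{b_{1}b_{2}}{6}$, this is exactly the claimed inequality.

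The only delicate point is choosing the evaluation point so that both $\nu=b$ and $t\geq t_{0}$ hold. The rest is bookkeeping with the relation from Lemma \ref{lemma:factsAboutNumericalLines}(1).
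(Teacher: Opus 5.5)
Your proof is correct and follows the paper's argument. You evaluate at the same point of $\ell$ (your $(b_{*},b_{*}^{2}-w_{*})$ is the paper's $(b_{*},\tfrac{1}{4}(b_{1}(\ell)^{2}+b_{2}(\ell)^{2}))$), transport semistability there via Lemma \ref{lemma:factsAboutNumericalLines}(4), apply Corollary \ref{bgInequalityProportionalH}, and rearrange using Lemma \ref{lemma:factsAboutNumericalLines}(1). The only thing left implicit is that $H^{2}\cdot\operatorname{ch}_{1}(E)-b_{*}H^{3}\operatorname{ch}_{0}(E)>0$ at that point, which follows from Lemma \ref{lemma:factsAboutNumericalLines}(2) and is needed so that $\nu_{b_{*},w}(E)=b_{*}$ is a finite slope.
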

 \begin{proof}
  We write $b_{*}=\tfrac{1}{2}(b_{1}(\ell)+b_{2}(\ell))$ and $w_{*}=\tfrac{1}{4}(b_{1}(\ell)^{2}+b_{2}(\ell)^{2})$.
  By Lemma \ref{lemma:factsAboutNumericalLines}, $E\in\operatorname{Coh}_{b_{*}}(X)$ and is $\nu_{b_{*},w_{*}}$-semistable.
  By choice of $b_{*}$ and $w_{*}$, we have $\nu_{b_{*},w_{*}}(E)=b_{*}$.
  Moreover, by assumption, $w_{*}-\tfrac{1}{2}b_{*}^{2}=\tfrac{1}{8}(b_{2}(\ell)-b_{1}(\ell))^{2}\geq t_{0}$.
  Therefore, applying Corollary \ref{bgInequalityProportionalH} to $E$ at $(b_{*},w_{*})$ gives
  \[
   \operatorname{ch}_{3}^{b_{*}H}(E)\leq\left(\frac{(b_{2}(\ell)-b_{1}(\ell))^{2}}{24}+M\right)H^{2}\cdot\operatorname{ch}_{1}^{b_{*}H}(E).
  \]
  Rearranging this inequality and applying Lemma \ref{lemma:factsAboutNumericalLines}(1) shows
  \begin{align*}
   \operatorname{ch}_{3}(E)-\frac{b_{1}(\ell)+b_{2}(\ell)}{3}H\cdot\operatorname{ch}_{2}(E)+\frac{b_{1}(\ell)b_{2}(\ell)}{6}H^{2}\cdot\operatorname{ch}_{1}(E) & = \operatorname{ch}_{3}^{b_{*}H}(E)-\frac{(b_{2}(\ell)-b_{1}(\ell))^{2}}{24} H^{2}\cdot\operatorname{ch}_{1}^{b_{*}H}(E) \\
     & \leq M H^{2}\cdot\operatorname{ch}_{1}^{b_{*}H}(E),
  \end{align*} 
  as desired.
 \end{proof}
 
 We now show that if $K_{X}\equiv\lambda H$ for some $\lambda\in\mathbb{Q}$ then $(X,H)$ satisfies $(\mathbf{wBG})$.

 \begin{proposition}
  \label{proposition:threefoldSatisfiesWBG}
  If $(X,H)$ is a smooth polarized complex threefold with $K_{X}\equiv\lambda H$ for some $\lambda\in\mathbb{Q}$ then $(X,H)$ satisfies $(\mathbf{wBG})$.
 \end{proposition}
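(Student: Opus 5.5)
The plan is to argue by contradiction using Lemma \ref{lemma:numericalWallBound} applied to the class $v_{n}$ along the line $\ell_{v,n}$. Fix $v$ with $r=\operatorname{ch}_{0}(v)\geq 1$, write $\mu=\mu_{0}(v)$ and $\epsilon=1/(4H^{3}r^{2})$, and suppose that for infinitely many $n$ there is a $\nu_{b,w}$-semistable $E\in\operatorname{Coh}_{b}(X)$ with $[E]=v_{n}$ for some $(b,w)\in\ell_{v,n}\cap U$. Write $b_{1}=b_{1}(\ell_{v,n})=-n+\delta_{n}$ and $b_{2}=b_{2}(\ell_{v,n})=\mu-\eta_{n}$. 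By \eqref{equation:lowerBoundLeftEndpoint} and the defining inequalities of $\ell_{v,n}$ we have $\tfrac{\epsilon}{2}\leq\delta_{n}\leq\epsilon$ and $0<\eta_{n}\leq\epsilon$. Hence $b_{2}-b_{1}=n+\mu-\eta_{n}-\delta_{n}\to\infty$, so $(b_{2}-b_{1})^{2}\geq 8t_{0}$ for $n\gg0$. Lemma \ref{lemma:numericalWallBound} therefore applies to $E$ with $M$ from Corollary \ref{bgInequalityProportionalH}; the hypothesis $K_{X}\equiv\lambda H$ enters exactly here. The resulting inequality involves only $\operatorname{ch}(v_{n})$, $b_{1}$ and $b_{2}$, so the rest is an asymptotic comparison in $n$.

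Next I would split both sides of that inequality using $\operatorname{ch}(v_{n})=\operatorname{ch}(v)-\operatorname{ch}(\mathcal{O}_{X}(-nH))$. The key computation is that, with $\operatorname{ch}(\mathcal{O}_{X}(-nH))=(1,-nH,\tfrac{n^{2}}{2}H^{2},-\tfrac{n^{3}}{6}H^{3})$, the left-hand functional applied to $\mathcal{O}_{X}(-nH)$ factors:
\[
-\tfrac{n^{3}}{6}H^{3}-\tfrac{b_{1}+b_{2}}{6}n^{2}H^{3}-\tfrac{b_{1}b_{2}}{6}nH^{3}=-\tfrac{H^{3}n}{6}(n+b_{1})(n+b_{2})=-\tfrac{H^{3}n}{6}\,\delta_{n}\,(n+\mu-\eta_{n}).
\]
Thus the contribution of $-[\mathcal{O}_{X}(-nH)]$ to the left side is $+\tfrac{H^{3}}{6}n\delta_{n}(n+\mu-\eta_{n})\geq\tfrac{\epsilon H^{3}}{12}n^{2}+O(n)$. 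The contribution of $v$ to the left side is linear in $b_{1}$ and $b_{2}$, since $b_{1}+b_{2}$ and $b_{1}b_{2}$ are $O(n)$ with $\delta_{n},\eta_{n}$ bounded. Its coefficients depend only on $\operatorname{ch}(v)$, so this part is $O(n)$.

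On the right side, $H^{2}\cdot\operatorname{ch}_{1}(v_{n})=H^{2}\cdot\operatorname{ch}_{1}(v)+nH^{3}$ and $\operatorname{ch}_{0}(v_{n})=r-1$. Together with $\tfrac{b_{1}+b_{2}}{2}=O(n)$, the right side is $M\cdot O(n)$, with implied constants depending only on $v$ and $(X,H)$. Comparing the two sides, the left is at least $\tfrac{\epsilon H^{3}}{12}n^{2}-Cn$ and the right is at most $C'n$. This is impossible for $n\gg0$, so no such $E$ exists and $(\mathbf{wBG})$ holds.

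The main point to get right is the uniformity of the constants. The lower bound $\delta_{n}\geq\epsilon/2$ from \eqref{equation:lowerBoundLeftEndpoint} is what makes the quadratic term strictly positive. Also $\epsilon$, $M$, $t_{0}$ and the $O(n)$ constants must be independent of $n$ and of $E$, which they are since they depend only on $v$ and $(X,H)$. Once the factorization above is in hand, the rest is bookkeeping.
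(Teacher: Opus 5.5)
Your proposal is correct and is essentially the paper's proof. Both apply Lemma \ref{lemma:numericalWallBound} to $v_{n}$ along $\ell_{v,n}$ once $(b_{2}-b_{1})^{2}\geq 8t_{0}$, isolate the term $\tfrac{H^{3}}{6}n(n+b_{1})(n+b_{2})$ coming from $-[\mathcal{O}_{X}(-nH)]$, bound everything else by $O(n)$, and use $n+b_{1}\geq\tfrac{\epsilon}{2}$ from \eqref{equation:lowerBoundLeftEndpoint} to force a contradiction; the only difference is your slightly sharper constant $\tfrac{\epsilon H^{3}}{12}$ versus the paper's $\tfrac{\epsilon H^{3}}{24}$, which comes from its cruder bound $n+b_{2}\geq\tfrac{n}{2}$.
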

 \begin{proof}
  We proceed by contradiction.
  Fix $v\in K_{\mathrm{num}}(X)$ with $\operatorname{ch}_{0}(v)\geq 1$.
  Suppose that, for arbitrarily large $n$ there exists a $\nu_{b,w}$-semistable $E\in\operatorname{Coh}_{b}(X)$ with $[E]=v_{n}$ for some $(b,w)\in\ell_{v,n}\cap U$.
  Since $\mu_{0}(v)-\epsilon\leq b_{2}(\ell_{v,n})$ and $b_{1}(\ell_{v,n})\leq-n+\epsilon$, $\ell_{v,n}$ satisfies the assumptions of Lemma \ref{lemma:numericalWallBound} for all $n\gg 0$.
  Applying Lemma \ref{lemma:numericalWallBound} and rewriting $\operatorname{ch}(v_{n})$ in terms of $\operatorname{ch}(v)$ shows
  \begin{align*}
   \frac{H^{3}}{6}n (n+b_{1}(\ell_{v,n}))(n+b_{2}(\ell_{v,n}))&\leq M\left(H^{2}\cdot\operatorname{ch}_{1}(v)+n H^{3}-\frac{b_{1}(\ell_{v,n})+b_{2}(\ell_{v,n})}{2}H^{3}(\operatorname{ch}_{0}(v)-1)\right) \\
   &\quad-\left(\operatorname{ch}_{3}(v)-\frac{b_{1}(\ell_{v,n})+b_{2}(\ell_{v,n})}{3}H\cdot\operatorname{ch}_{2}(v)+\frac{b_{1}(\ell_{v,n})b_{2}(\ell_{v,n})}{6}H^{2}\cdot\operatorname{ch}_{1}(v)\right)
  \end{align*}
  By definition of $\ell_{v,n}$ and Equation \ref{equation:lowerBoundLeftEndpoint},
  \[
   (b_{1}(\ell_{v,n}),b_{2}(\ell_{v,n}))\in\left[\tfrac{1}{2}\epsilon-n,\epsilon-n\right]\times\left[\mu_{0}(v)-\epsilon,\mu_{0}(v)\right]
  \]
  These bounds allow us to choose $C>0$, depending only on $v$ and $M$, such that
  \begin{align*}
   &M\left(H^{2}\cdot\operatorname{ch}_{1}(v)+n H^{3}-\frac{b_{1}(\ell_{v,n})+b_{2}(\ell_{v,n})}{2}H^{3}(\operatorname{ch}_{0}(v)-1)\right) \\
   \qquad &-\left(\operatorname{ch}_{3}(v)-\frac{b_{1}(\ell_{v,n})+b_{2}(\ell_{v,n})}{3}H\cdot\operatorname{ch}_{2}(v)+\frac{b_{1}(\ell_{v,n})b_{2}(\ell_{v,n})}{6}H^{2}\cdot\operatorname{ch}_{1}(v)\right) \\
   &\qquad \leq C(1+n)
  \end{align*}
  Thus
  \[
   \frac{H^{3}}{6} n(n+b_{1}(\ell_{v,n}))(n+b_{2}(\ell_{v,n}))\leq C(1+n)
  \]
  where $C$ depends only on $v$ and $M$.

  On the other hand, $n+b_{2}(\ell_{v,n})\geq \tfrac{1}{2}n$ for all $n\gg 0$.
  Likewise, by Equation \ref{equation:lowerBoundLeftEndpoint}, $n+b_{1}(\ell_{v,n})\geq\tfrac{1}{2}\epsilon$.
  Therefore, for all $n\gg 0$,
  \[
   \frac{H^{3}}{6}n(n+b_{1}(\ell_{v,n}))(n+b_{2}(\ell_{v,n}))\geq\frac{H^{3}}{24}n^{2}\epsilon.
  \]
  Combining these inequalities gives
  \[
   \frac{H^{3}}{24} n^{2}\epsilon\leq C(1+n).
  \]
  This is a contradiction for $n\gg 0$.
 \end{proof}

\section{Application to Donaldson--Thomas theory}
 In this section, we show that the generalized Donaldson--Thomas (DT) invariants counting Gieseker semistable coherent sheaves can be expressed in terms of rank zero invariants, and then in terms of rank one invariants.
 Thus, both \cite{feyzbakhsh2024}*{Theorem 1} and \cite{feyzbakhsh2023}*{Theorem 1} hold unconditionally.
 
 We briefly recall generalized Donaldson-Thomas invariants.
 Assume $X$ is a smooth projective threefold with $K_{X}\cong\mathcal{O}_{X}$ and $H^{1}(X,\mathcal{O}_{X})=0$. 
 Fix $v\in K_{\mathrm{num}}(X)$ and $(b,w)\in U_{v}$.
 We write $\mathsf{J}(v)$ for the generalized Donaldson-Thomas invariant of Gieseker semistable sheaves (with respect to $H$) in $\operatorname{Coh}(X)$~\cite{joyce2012}.
 We write $\mathsf{J}_{b,w}(v)\in\mathbb{Q}$ for the generalized Donaldson--Thomas invariant of $\nu_{b,w}$-semistable objects of numerical class $v$, constructed as in \cite{feyzbakhsh2024}*{Section 4}.
 By Lemma \ref{lemma:wallProperties}(4), the limit $\mathsf{J}_{b,\infty}(v)=\lim_{w\to\infty} \mathsf{J}_{b,w}(v)$ is well-defined.
 
 The construction of $\mathsf{J}_{b,w}(v)$ in \cite{feyzbakhsh2024}*{Section 4} uses the Bayer--Macr{\`i}--Toda (BMT) conjecture.
 Specifically, the proof of \cite{feyzbakhsh2024}*{Proposition 4.1(i)} establishes that the stack of $\nu_{b,w}$-semistable objects of numerical class $v$ is algebraic of finite type by invoking \cite{feyzbakhsh2024}*{Theorem C.2 and Theorem C.5} which assume the BMT conjecture.
 This dependence can be avoided: openness and boundedness follow from \cite{bayer2021}*{Lemma 21.12 and the proof of Proposition 25.3} and \cite{piyaratne2019}*{Corollary 4.18} respectively.
 With this modification, the construction of $\mathsf{J}_{b,w}(v)$ in \cite{feyzbakhsh2024}*{Section 4} goes through unchanged.
 In particular, Feyzbakhsh--Thomas's analogue of the Joyce--Song wall-crossing formula holds.
 
 Proposition \ref{proposition:threefoldSatisfiesWBG} gives the reduction to rank zero invariants.
 
 \begin{corollary}[cf. \cite{feyzbakhsh2024}*{Theorem 1}]
  \label{corollary:rankZeroReduction}
  Assume $(X,H)$ is a polarized Calabi--Yau threefold.
  Fix $v\in K_{\mathrm{num}}(X)$ with $\operatorname{ch}_{0}(v)\geq 1$.
  Then $\mathsf{J}(v)$ can be written as a polynomial with rational coefficients (depending on $v$, $(X,H)$, and the graded ring $H^{\bullet}(X,\mathbb{Q})$) in finitely many invariants
  \[
   \mathsf{J}(\alpha_{1}),\ldots,\mathsf{J}(\alpha_{m})
  \]
  where $\operatorname{ch}_{0}(\alpha_{i})=0$ and $H^{2}\cdot\operatorname{ch}_{1}(\alpha_{i})>0$.
 \end{corollary}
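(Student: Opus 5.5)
The plan is to reduce directly to \cite{feyzbakhsh2024}*{Theorem 1}. Its proof is formal once two inputs are in place: condition $(\mathbf{wBG})$ for $(X,H)$, and a well-behaved theory of tilt-semistable invariants $\mathsf{J}_{b,w}$ with a wall-crossing formula. For a polarized Calabi--Yau threefold, $K_{X}\cong\mathcal{O}_{X}$, so $K_{X}\equiv 0\cdot H$. Proposition \ref{proposition:threefoldSatisfiesWBG} (with $\lambda=0$) then gives $(\mathbf{wBG})$. The second input was discussed above: replacing \cite{feyzbakhsh2024}*{Theorem C.2 and Theorem C.5} by openness from \cite{bayer2021} and boundedness from \cite{piyaratne2019} makes the moduli stacks of $\nu_{b,w}$-semistable objects algebraic of finite type without the BMT conjecture. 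Hence $\mathsf{J}_{b,w}$ and Feyzbakhsh--Thomas's Joyce--Song type wall-crossing formula are available.

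I would then follow the structure of \cite{feyzbakhsh2024} step by step, checking at each point that BMT is used only through $(\mathbf{wBG})$ or through the construction of $\mathsf{J}_{b,w}$. First, for $\operatorname{ch}_{0}(v)\geq 1$ and $n\gg 0$, large volume ($w\to\infty$, using Lemma \ref{lemma:wallProperties}(4)) identifies $\mathsf{J}_{b,\infty}$ of the relevant classes with Gieseker invariants. This expresses $\mathsf{J}(v)$ via the class $v_{n}=v-[\mathcal{O}_{X}(-nH)]$, which governs pairs/extensions $\mathcal{O}_{X}(-n)\to E$. Second, one moves down along $\ell_{v,n}$. Condition $(\mathbf{wBG})$ says there are no semistable objects of class $v_{n}$ on $\ell_{v,n}\cap U$. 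So the total invariant on that line vanishes, and the wall-crossing formula from large volume down to $\ell_{v,n}$ writes the large-volume invariant as a finite sum over destabilizing decompositions. Finiteness comes from Lemma \ref{lemma:wallProperties}(4) together with local finiteness of walls. Third, each factor has either $\mathcal{O}_{X}(-nH)$-type rank one pieces, which are $\mathcal{O}_{X}(-nH)[1]$ with trivial invariant contribution, or classes whose rank has dropped. Induction on $\operatorname{ch}_{0}$ reduces everything to rank zero classes $\alpha_{i}$ with $H^{2}\cdot\operatorname{ch}_{1}(\alpha_{i})>0$. The tilt invariants of these are in turn identified with Gieseker invariants $\mathsf{J}(\alpha_{i})$ via the rank zero large-volume comparison in \cite{feyzbakhsh2024}. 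The coefficients of the resulting polynomial come from Euler pairings $\chi(\cdot,\cdot)$, which by Hirzebruch--Riemann--Roch depend only on the Chern characters and the ring $H^{\bullet}(X,\mathbb{Q})$. This gives the asserted dependence.

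The main obstacle is bookkeeping rather than new mathematics. One must verify that every place in \cite{feyzbakhsh2024} that invokes the BMT conjecture either is used only to deduce $(\mathbf{wBG})$ or is in the construction of $\mathsf{J}_{b,w}$ of Section 4. I would first audit the proof of \cite{feyzbakhsh2024}*{Theorem 1} and its supporting lemmas (Sections 3 and 5 there) for BMT usage. Where a Bogomolov-type bound is needed to bound the destabilizing classes along the path, I would check that the classical inequality $\Delta\geq 0$ from Lemma \ref{lemma:basicFactsTiltStability}(1) suffices. With that verified, the corollary follows directly from \cite{feyzbakhsh2024}*{Theorem 1}.
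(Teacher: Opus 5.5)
Your proposal is correct and follows the paper's proof: you obtain $(\mathbf{wBG})$ from Proposition~\ref{proposition:threefoldSatisfiesWBG} with $\lambda=0$, you build the invariants $\mathsf{J}_{b,w}$ without BMT via openness from \cite{bayer2021} and boundedness from \cite{piyaratne2019}, and you then apply \cite{feyzbakhsh2024}*{Theorem 1} directly. Your step-by-step reconstruction and audit of Feyzbakhsh--Thomas's wall-crossing argument is extra (and only a loose sketch); the paper omits it because it treats their theorem as needing only $(\mathbf{wBG})$ once the invariants $\mathsf{J}_{b,w}$ are constructed.
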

 \begin{proof}
  By Proposition \ref{proposition:threefoldSatisfiesWBG}, $(X,H)$ satisfies $(\mathbf{wBG})$.
  The result follows from \cite{feyzbakhsh2024}*{Theorem 1}.
 \end{proof}
 
 It remains to reduce rank zero invariants to rank one invariants.
 
\subsection{Numerical estimates}
 In this subsection we give some bounds on $\operatorname{ch}_{3}$ for objects of interest.

 We use big-$O$ notation.
 For fixed $v\in K_{\mathrm{num}}(X)$, we write $f(n)=O(n^{a})$ if $|f(n)|\leq C n^{a}$ for all sufficiently large $n$ where $C>0$ and the lower bound on $n$ depend only on $v$ and $(X,H)$.
 We state any different dependence of the implied constant explicitly.
 
 The following lemma controls destabilizing objects on sufficiently wide numerical lines.

 \begin{lemma}[cf. \cite{feyzbakhsh2023}*{Lemmas 2.1 and 2.2}]
  \label{lemma:filtrationExists}
  Fix $(b,w)\in U$.
  Let $E\in\operatorname{Coh}_{b}(X)$ be a $\nu_{b,w}$-semistable object with $\operatorname{ch}_{0}(E)=-1$ and $\nu_{b,w}(E)<+\infty$.
  Suppose $\ell$ is a numerical line for $[E]$ through $(b,w)$ satisfying
  \[
   H^{2}\cdot\operatorname{ch}_{1}(E)+b_{1}(\ell)H^{3}<(b_{2}(\ell)-b_{1}(\ell))H^{3}.
  \]
  Assume
  \[
   0=E_{0}\subseteq E_{1}\subseteq\cdots\subseteq E_{m}=E
  \]
  is a filtration in $\operatorname{Coh}_{b}(X)$ with nonzero $\nu_{b,w}$-semistable consecutive factors $Q_{i}=E_{i}/E_{i-1}$ satisfying $\nu_{b,w}(Q_{i})=\nu_{b,w}(E)$.
  Then there exists a unique $i_{0}$ such that $\operatorname{ch}_{0}(Q_{i_{0}})=-1$.
  Moreover, the following statements hold:
  \begin{enumerate}
   \item{
    $-b_{1}(\ell)H^{3}\leq H^{2}\cdot\operatorname{ch}_{1}(Q_{i_{0}})$,
   }
   \item{
    $H^{2}\cdot\operatorname{ch}_{1}(Q_{i_{0}})\leq H^{2}\cdot\operatorname{ch}_{1}(E)$ with equality if and only if $m=1$,
   }
   \item{
    If $i\neq i_{0}$ then $Q_{i}$ is a pure sheaf supported in codimension one,
   }
   \item{
    If $i\neq i_{0}$ then no destabilizing wall of $[Q_{i}]$ intersects the closed ray $\{(b,w^{\prime}):w^{\prime}\geq w\}$.
   }
  \end{enumerate}
 \end{lemma}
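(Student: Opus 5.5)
Since $\operatorname{ch}_{0}$ is additive, $\sum_{i}\operatorname{ch}_{0}(Q_{i})=-1$. The first step is to show each $\operatorname{ch}_{0}(Q_{i})\in\{-1,0\}$ and that more than one factor of rank $-1$ cannot occur. I will use the geometry of the numerical line. Every $Q_{i}$ has $\nu_{b,w}(Q_{i})=\nu_{b,w}(E)<+\infty$, so the numerical lines of $Q_{i}$ and $E$ through $(b,w)$ coincide with $\ell$. By Lemma \ref{lemma:factsAboutNumericalLines}(4), each $Q_{i}$ is then $\nu$-semistable in $\operatorname{Coh}_{b'}(X)$ for all $(b',w')\in\ell\cap U$. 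Letting $(b',w')$ approach the two endpoints gives
\[
H^{2}\cdot\operatorname{ch}_{1}(Q_{i})-b_{j}(\ell)H^{3}\operatorname{ch}_{0}(Q_{i})\geq 0,\qquad j=1,2.
\]
From this I read off the constraints. If $\operatorname{ch}_{0}(Q_{i})=r>0$, then $H^{2}\cdot\operatorname{ch}_{1}(Q_{i})\geq rb_{2}H^{3}$. If $r<0$, then $H^{2}\cdot\operatorname{ch}_{1}(Q_{i})\geq |r|(-b_{1})H^{3}$. If $r=0$, then $H^{2}\cdot\operatorname{ch}_{1}(Q_{i})\geq 0$, and this quantity is strictly positive because $\nu<\infty$.

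Next I sum over the factors. Let $R^{+}$ be the total positive rank and $R^{-}=R^{+}+1$ the total negative rank. Summing the bounds gives
\[
H^{2}\cdot\operatorname{ch}_{1}(E)\geq R^{+}b_{2}H^{3}-(R^{+}+1)b_{1}H^{3}=-b_{1}H^{3}+R^{+}(b_{2}-b_{1})H^{3}.
\]
The hypothesis $H^{2}\cdot\operatorname{ch}_{1}(E)+b_{1}H^{3}<(b_{2}-b_{1})H^{3}$ then forces $R^{+}=0$. Hence every rank is $\leq0$, and the negative ranks sum to $-1$, so there is exactly one index $i_{0}$ with $\operatorname{ch}_{0}(Q_{i_{0}})=-1$ and all other factors have rank zero. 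This gives uniqueness of $i_{0}$. Claim (1) is the $j=1$ bound for $Q_{i_{0}}$. Claim (2) follows because the rank-zero factors contribute strictly positive $H^{2}\cdot\operatorname{ch}_{1}$, so equality holds exactly when there are no such factors, i.e.\ $m=1$.

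For (3), take $i\neq i_{0}$, so $Q_{i}$ has rank zero and $\nu$-semistable of finite slope. I would argue as in \cite{feyzbakhsh2023}*{Lemma 2.1}. First, $\mathcal{H}^{-1}(Q_{i})$ is torsion-free and of rank zero, so it vanishes; hence $Q_{i}$ is a sheaf in $\mathcal{T}_{b}$. Its torsion in dimension $\leq1$ would be a subobject of slope $+\infty$, which contradicts semistability at finite slope. So $Q_{i}$ is pure of dimension two.

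For (4), which I expect to be the main obstacle, I would use (3) together with the position of $\ell$. A destabilizing wall for the rank-zero class $[Q_{i}]$ is a numerical line parallel to $\ell$, since all such lines have fixed slope $H\cdot\operatorname{ch}_{2}/H^{2}\cdot\operatorname{ch}_{1}$. A wall above the ray $\{(b,w')\}$ would give a destabilizing subobject $A\subseteq Q_{i}$ along a parallel line lying above $\ell$, and so with wider endpoint interval $[b_{1}',b_{2}']\supset[b_{1},b_{2}]$. Applying the endpoint inequality above to $A$ and to the quotient $Q_{i}/A$ on that line, one of them must have nonzero rank $r$. That term then satisfies a bound of size $|r|(b_{2}'-b_{1}')H^{3}\leq H^{2}\cdot\operatorname{ch}_{1}(Q_{i})$. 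On the other hand, by (1) and (2),
\[
H^{2}\cdot\operatorname{ch}_{1}(Q_{i})\leq H^{2}\cdot\operatorname{ch}_{1}(E)+b_{1}H^{3}<(b_{2}-b_{1})H^{3}\leq(b_{2}'-b_{1}')H^{3},
\]
which is a contradiction. Making this work requires careful tracking of which endpoints bound the positive- and negative-rank pieces, since these behave like the two-sided estimate in \cite{feyzbakhsh2023}*{Lemma 2.2}.
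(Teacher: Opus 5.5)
\textbf{Gap in (3).} Your object-level argument for the existence and uniqueness of $i_{0}$ and for (1) and (2) is sound. Your sketch of (4) is essentially the paper's argument: a genuine wall of the rank-zero class $[Q_{i}]$ needs a destabilizing subobject of nonzero rank $r$, and your two endpoint bounds then sum to $|r|(b_{2}(\ell')-b_{1}(\ell'))H^{3}\leq H^{2}\cdot\operatorname{ch}_{1}(Q_{i})$.

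The problem is the first sentence of (3). You assert that $\mathcal{H}^{-1}(Q_{i})$ has rank zero for $i\neq i_{0}$. What you actually proved is only $\operatorname{ch}_{0}(Q_{i})=0$, that is, $\operatorname{ch}_{0}(\mathcal{H}^{-1}(Q_{i}))=\operatorname{ch}_{0}(\mathcal{H}^{0}(Q_{i}))$. A rank-zero object of $\operatorname{Coh}_{b}(X)$ can be a genuine two-term complex whose $\mathcal{H}^{-1}$ and $\mathcal{H}^{0}$ have the same positive rank. Your inequality $H^{2}\cdot\operatorname{ch}_{1}(Q_{i})-b_{j}(\ell)H^{3}\operatorname{ch}_{0}(Q_{i})\geq 0$ only sees the total rank, so it cannot rule this out. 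As written, the claim that $Q_{i}$ is a sheaf is not established, and your purity argument depends on it.

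\textbf{The fix.} Apply the endpoint bounds to the cohomology sheaves rather than to $Q_{i}$. Since $Q_{i}\in\operatorname{Coh}_{b'}(X)$ for all $b'\in(b_{1}(\ell),b_{2}(\ell))$, the torsion pair puts both $\mathcal{H}^{-1}(Q_{i})[1]$ and $\mathcal{H}^{0}(Q_{i})$ in $\operatorname{Coh}_{b'}(X)$ for all such $b'$. Letting $b'$ tend to the endpoints gives
\[
H^{2}\cdot\operatorname{ch}_{1}(\mathcal{H}^{-1}(Q_{i}))\leq b_{1}(\ell)H^{3}\operatorname{ch}_{0}(\mathcal{H}^{-1}(Q_{i})),\qquad H^{2}\cdot\operatorname{ch}_{1}(\mathcal{H}^{0}(Q_{i}))\geq b_{2}(\ell)H^{3}\operatorname{ch}_{0}(\mathcal{H}^{0}(Q_{i})).
\]
The paper sums these over all $i$ to get
\[
(b_{2}(\ell)-b_{1}(\ell))H^{3}\sum_{i}\operatorname{ch}_{0}(\mathcal{H}^{0}(Q_{i}))\leq H^{2}\cdot\operatorname{ch}_{1}(E)+b_{1}(\ell)H^{3}.
\]
The width hypothesis then forces every $\mathcal{H}^{0}(Q_{i})$ to be torsion. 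Hence exactly one $\mathcal{H}^{-1}(Q_{i})$ has rank one, and the others vanish, being torsion-free of rank zero. This gives the unique $i_{0}$ and the sheaf property at once.

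You can also patch (3) after your own (1) and (2). Suppose $\operatorname{ch}_{0}(\mathcal{H}^{-1}(Q_{i}))=r$ for some $i\neq i_{0}$. The displayed bounds give $H^{2}\cdot\operatorname{ch}_{1}(Q_{i})\geq r(b_{2}(\ell)-b_{1}(\ell))H^{3}$. On the other hand, (1) and the positivity of the other rank-zero factors give
\[
H^{2}\cdot\operatorname{ch}_{1}(Q_{i})\leq H^{2}\cdot\operatorname{ch}_{1}(E)+b_{1}(\ell)H^{3}<(b_{2}(\ell)-b_{1}(\ell))H^{3}.
\]
Together these force $r=0$. From there your argument via the $\nu=+\infty$ torsion subsheaf correctly gives purity.
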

 \begin{proof}
  By assumption, $\ell$ is a numerical line of $[Q_{i}]$.
  Therefore, by Lemma \ref{lemma:factsAboutNumericalLines}(4) $Q_{i}\in\operatorname{Coh}_{b^{\prime}}(X)$ for every $b^{\prime}\in(b_{1}(\ell),b_{2}(\ell))$.
  Taking the limit as $b^{\prime}$ approaches the endpoints of this open interval shows
  \begin{gather*}
   H^{2}\cdot\operatorname{ch}_{1}(\mathcal{H}^{-1}(Q_{i}))\leq b_{1}(\ell)H^{3}\operatorname{ch}_{0}(\mathcal{H}^{-1}(Q_{i})) \\
   H^{2}\cdot\operatorname{ch}_{1}(\mathcal{H}^{0}(Q_{i}))\geq b_{2}(\ell)H^{3}\operatorname{ch}_{0}(\mathcal{H}^{0}(Q_{i}))
  \end{gather*}
  where $\mathcal{H}^{i}:D^{b}(X)\to\operatorname{Coh}(X)$ denotes  cohomology with respect to the standard $t$-structure.
  Summing over all $Q_{i}$ gives
  \[
   (b_{2}(\ell)-b_{1}(\ell))H^{3}\sum_{i}\operatorname{ch}_{0}(\mathcal{H}^{0}(Q_{i}))\leq H^{2}\cdot\operatorname{ch}_{1}(E)+b_{1}(\ell)H^{3}.
  \]
  The assumed bound on $(b_{2}(\ell)-b_{1}(\ell))H^{3}$ implies that $\operatorname{ch}_{0}(\mathcal{H}^{0}(Q_{i}))=0$ for all $i$.
  Since $\sum_{i}\operatorname{ch}_{0}(Q_{i})=-1$, there exists a unique $i_{0}$ such that $\operatorname{ch}_{0}(Q_{i_{0}})=-1$ and $\operatorname{ch}_{0}(Q_{i})=0$ for all $i\neq i_{0}$.
  For $i\neq i_{0}$, it follows that $Q_i$ is a sheaf of rank zero.
  Since $Q_{i}$ is $\nu_{b,w}$-semistable with $\nu_{b,w}(Q_{i})<+\infty$, it is pure of codimension one.
  
  Since $\mathcal H^{0}(Q_{i_0})$ is torsion and $\mathcal H^{-1}(Q_{i_0})$ has rank one, the above inequalities give
  \[
   H^{2}\cdot\operatorname{ch}_{1}(Q_{i_{0}})=H^{2}\cdot\operatorname{ch}_{1}(\mathcal{H}^{0}(Q_{i_{0}}))-H^{2}\cdot\operatorname{ch}_{1}(\mathcal{H}^{-1}(Q_{i_{0}}))\geq-b_{1}(\ell) H^{3}.
  \]
  The upper bound and the characterization of the equality case follow by additivity.

  Now, suppose $\ell^{\prime}$ is a wall of $[Q_{i}]$ for some $i\neq i_{0}$.
  Suppose, for contradiction, that there exists $w^{\prime}\geq w$ such that $(b,w^{\prime})\in \ell^{\prime}$.
  By the same argument used above to bound $b_{1}(\ell)$ and $b_{2}(\ell)$, we find
  \[
   (b_{2}(\ell^{\prime})-b_{1}(\ell^{\prime}))H^{3}\leq H^{2}\cdot\operatorname{ch}_{1}(Q_{i}).
  \]
  Since $\ell^{\prime}$ is a wall of the rank zero class $[Q_{i}]$, the supporting numerical line is parallel to $\ell$.
  Therefore,
  \[
   (b_{2}(\ell^{\prime})-b_{1}(\ell^{\prime}))^{2}=(b_{2}(\ell)-b_{1}(\ell))^{2}+8(w^{\prime}-w).
  \]
  By additivity and the lower bound on
$H^{2}\cdot\operatorname{ch}_{1}(Q_{i_0})$, we obtain a contradiction with the assumed bound on the width of $\ell$.
 \end{proof}
 
 The following lemma adapts \cite{feyzbakhsh2024}*{Lemmas 2.6 and B.3} by replacing the BMT conjecture with Lemma \ref{lemma:numericalWallBound}.
 
 \begin{lemma}[cf. \cite{feyzbakhsh2024}*{Lemma B.3}]
  \label{lemma:rankZeroThirdChernBound}
  Let $E$ be a nonzero pure sheaf supported in codimension one.
  If $E$ is $\nu_{b,w}$-semistable for some $(b,w)\in U$ then
  \[
   \operatorname{ch}_{3}(E)\leq\frac{(H\cdot\operatorname{ch}_{2}(E))^{2}}{2H^{2}\cdot\operatorname{ch}_{1}(E)}+\frac{(H^{2}\cdot\operatorname{ch}_{1}(E))^{3}}{24 (H^{3})^{2}}+O(H^{2}\cdot\operatorname{ch}_{1}(E)).
  \]
  Here, the implied constant depends only on $(X,H)$.
 \end{lemma}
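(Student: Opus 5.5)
The plan is to reduce to Lemma \ref{lemma:numericalWallBound} on a suitably wide numerical line for $v=[E]$, and to control the width of that line by the support of $E$. Write $c=H^{2}\cdot\operatorname{ch}_{1}(E)>0$ and $s=H\cdot\operatorname{ch}_{2}(E)/c$. Since $\operatorname{ch}_{0}(E)=0$, every numerical line $\ell$ for $v$ has slope $s$. Lemma \ref{lemma:factsAboutNumericalLines}(1) then gives $b_{1}(\ell)+b_{2}(\ell)=2s$. Writing $\delta=b_{2}(\ell)-b_{1}(\ell)$, we get $b_{1}(\ell)b_{2}(\ell)=s^{2}-\tfrac14\delta^{2}$.

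\emph{Case 1: $\delta^{2}\geq 8t_{0}$.} Let $\ell$ be the line through the given $(b,w)$. Substituting into Lemma \ref{lemma:numericalWallBound} with $\operatorname{ch}_{0}=0$ gives
\[
\operatorname{ch}_{3}(E)\leq \tfrac{2}{3}s^{2}c-\tfrac16\bigl(s^{2}-\tfrac14\delta^{2}\bigr)c+Mc=\frac{(H\cdot\operatorname{ch}_{2}(E))^{2}}{2c}+\frac{\delta^{2}}{24}c+Mc.
\]
It remains to bound $\delta$ by $c$. By Lemma \ref{lemma:factsAboutNumericalLines}(4), $E\in\operatorname{Coh}_{b'}(X)$ for all $b'\in(b_{1}(\ell),b_{2}(\ell))$. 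Running the endpoint argument from the proof of Lemma \ref{lemma:filtrationExists} on the two cohomology sheaves of $E$ should give $\delta H^{3}\leq c$. Here $E$ is a sheaf, so $\mathcal H^{-1}(E)=0$, and a slight refinement is needed; I would compare $E$ with $\mathcal{O}_{X}(-b_{1})\to\mathcal{O}_{X}(-b_{1})\otimes$ (a divisor of class $\operatorname{ch}_{1}(E)$), or argue as in part (4) of Lemma \ref{lemma:filtrationExists}. Plugging $\delta^{2}\leq c^{2}/(H^{3})^{2}$ in yields exactly the claimed leading terms, with error $Mc=O(c)$.

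\emph{Case 2: $\delta^{2}<8t_{0}$.} I would move $E$ upward through the parallel lines of slope $s$ (increasing intercept, equivalently increasing $w$ at fixed $b$), using the closedness and wall structure of Lemma \ref{lemma:wallProperties}. There are two possibilities.
\begin{itemize}
\item If $E$ stays semistable until the width reaches $\sqrt{8t_{0}}$, then Case 1 applies there. The extra term $\tfrac{t_{0}}{3}c$ is absorbed into $O(c)$.
\item Otherwise $E$ meets a wall of width less than $\sqrt{8t_{0}}$. There it has a destabilizing sequence $0\to F\to E\to G\to0$ with semistable factors of the same tilt slope and smaller discriminant and smaller $H^{2}\cdot\operatorname{ch}_{1}$. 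I would then induct on $c$, which takes values in a discrete set.
\end{itemize}
Along such a narrow line, all of $\operatorname{ch}_{0},\operatorname{ch}_{1}\cdot H^{2},\operatorname{ch}_{2}\cdot H$ of the factors are pinned by $\ell$ and by the bounds of Lemma \ref{lemma:filtrationExists}-type arguments. The ranks of the factors are bounded (summing to $0$), and $H\cdot\operatorname{ch}_{2}$ of each factor equals a linear function of $H^{2}\cdot\operatorname{ch}_{1}$ up to $O(1)$. I would use Corollary \ref{bgInequalityProportionalH}, or the inductive hypothesis for rank zero factors, to bound $\operatorname{ch}_{3}$ of each factor, and then sum.

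\textbf{Main obstacle.} The main obstacle is this summation step in Case 2. The right-hand side $\tfrac{(H\cdot\operatorname{ch}_{2})^{2}}{2c}+\tfrac{c^{3}}{24(H^{3})^{2}}$ is not additive. The term $(H\cdot\operatorname{ch}_{2})^{2}/c$ is subadditive in the wrong direction, and factors of nonzero rank must be handled separately. The saving point should be that on a line of slope $s$ and bounded width, each factor satisfies $H\cdot\operatorname{ch}_{2}=s\,H^{2}\cdot\operatorname{ch}_{1}+O(H^{2}\cdot\operatorname{ch}_{1})$. The quadratic terms then match up to $O(c)$, and the cubic term is superadditive, $\sum c_{i}^{3}\leq c^{3}$. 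So the induction closes with a constant depending only on $(X,H)$, following the pattern of \cite{feyzbakhsh2024}*{Lemma B.3}.
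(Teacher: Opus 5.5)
\paragraph{Where the gap is.} Your reduction to Lemma \ref{lemma:numericalWallBound} on a line of slope $s$, and the bound $\operatorname{ch}_{3}(E)\leq\frac{(H\cdot\operatorname{ch}_{2}(E))^{2}}{2c}+\frac{\delta^{2}}{24}c+Mc$, are correct and match the paper. The gap is Case 1: the line through the \emph{given} point $(b,w)$ need not satisfy $\delta H^{3}\leq c$.

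Every subobject $F$ of $E$ in $\operatorname{Coh}_{b}(X)$ is a sheaf of rank $r\geq 0$, since $\mathcal{H}^{-1}(F)\subseteq\mathcal{H}^{-1}(E)=0$. Hence $\nu_{b,w'}(F)$ is nonincreasing in $w'$, while $\nu_{b,w'}(E)=s$ is constant. So $E$ is $\nu_{b,w'}$-semistable for all $w'\geq w$. The given point can therefore lie on a line of arbitrarily large width while $c$ stays fixed, for instance $E=\mathcal{O}_{S}$ with $S$ an integral divisor and $w\gg 0$. For such lines, $\frac{\delta^{2}}{24}c$ is not $\frac{c^{3}}{24(H^{3})^{2}}+O(c)$.

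Neither refinement you suggest can repair this, because the claim is false. First, $E$ lies in every $\operatorname{Coh}_{b'}(X)$, so the endpoint argument only yields $c\geq 0$. Second, the width bound $b_{2}(\ell')-b_{1}(\ell')\leq c/H^{3}$, proved as in Lemma \ref{lemma:filtrationExists}(4), applies only to actual destabilizing walls $\ell'$, where the destabilizing subsheaf has rank $r\geq 1$. It does not apply to an arbitrary line on which $E$ happens to be semistable.

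\paragraph{The missing step.} You should \emph{choose} the line rather than use the one through $(b,w)$. Upward persistence of semistability, together with the wall-width bound, shows that $E$ is semistable on every line of slope $s$ and width $>c/H^{3}$. You can reach such a line by sliding up or down from $(b,w)$ without crossing a wall. The paper takes the line of width $\sqrt{8C}$, where $C=\max\{t_{0},1\}+\frac{c^{2}}{8(H^{3})^{2}}$. Its width squared is at least $8t_{0}$, so your computation applies there and gives $\frac{C}{3}c+Mc=\frac{c^{3}}{24(H^{3})^{2}}+O(c)$.

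The same persistence shows that in your Case 2 you never meet a wall while moving upward. So the induction over destabilizing factors is unnecessary. That is fortunate, because as sketched it would not close: factors of nonzero rank need not satisfy $0<H^{2}\cdot\operatorname{ch}_{1}<c$. Hence neither the induction on $c$ nor the inequality $\sum c_{i}^{3}\leq c^{3}$ is available.
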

 \begin{proof}
  Set
  \[
   C=\max\left\{t_{0},1\right\}+\frac{(H^{2}\cdot\operatorname{ch}_{1}(E))^{2}}{8(H^{3})^{2}}
  \]
  where $t_{0}$ is as in Lemma \ref{lemma:numericalWallBound}.
  Consider the numerical line $\ell$ for $[E]$ passing through $(\nu,\tfrac{1}{2}\nu^{2}+C)$ where $\nu=H\cdot\operatorname{ch}_{2}(E)/(H^{2}\cdot\operatorname{ch}_{1}(E))$.
  Since $E$ is a sheaf supported in codimension $\geq 1$ and is $\nu_{b,w}$-semistable, $E$ is $\nu_{b,w^{\prime}}$-semistable for all $w^{\prime}\geq w$.
  Therefore, there exists $(b^{\prime},w^{\prime})$ above $\ell$ where $E$ is $\nu_{b^{\prime},w^{\prime}}$-semistable.
  By a standard argument (cf. \cite{li2019}*{Proof of Lemma 4.10}), if $\ell^{\prime}$ is a destabilizing wall for $E$ then
  \begin{equation}
   \label{equation:lisBound}
   b_{2}(\ell^{\prime})-b_{1}(\ell^{\prime})\leq\frac{H^{2}\cdot\operatorname{ch}_{1}(E)}{H^{3}}.
  \end{equation}
  By construction,
  \[
   b_{2}(\ell)-b_{1}(\ell)=\sqrt{8C}>\frac{H^{2}\cdot\operatorname{ch}_{1}(E)}{H^{3}}
  \]
  and so every destabilizing wall of $E$ lies below $\ell$.
  In particular, we can choose $(b^{\prime},w^{\prime})\in\ell$ where $E$ is $\nu_{b^{\prime},w^{\prime}}$-semistable.
  Therefore, since $(b_{2}(\ell)-b_{1}(\ell))^{2}=8C\geq 8t_{0}$, by Lemma \ref{lemma:numericalWallBound} applied to $\ell$,
  \[
   \operatorname{ch}_{3}(E)-\frac{2\nu}{3}H\cdot\operatorname{ch}_{2}(E)+\frac{\nu^{2}-2C}{6}H^{2}\cdot\operatorname{ch}_{1}(E)\leq MH^{2}\cdot\operatorname{ch}_{1}(E).
  \]
  Substituting the expressions for $\nu$ and $C$ and expanding gives the claimed bound.
 \end{proof}
 
 \begin{lemma}[cf. \cite{feyzbakhsh2024}*{Lemma 2.6}]
   \label{lemma:rankMinusOneThirdChernBound}
  Assume $E\in D^{b}(X)$ satisfies $\operatorname{ch}_{0}(E)=-1$ and $H^{2}\cdot\operatorname{ch}_{1}(E)=0$.
  If $E$ is $\nu_{1/H^{3},w}$-semistable for some $(1/H^{3},w)\in U$ then $H\cdot\operatorname{ch}_{2}(E)\geq 0$ and
  \[
   \operatorname{ch}_{3}(E)\leq\frac{2}{3}(H\cdot\operatorname{ch}_{2}(E))^{2}+O(1+H\cdot\operatorname{ch}_{2}(E)).
  \]
  Here, the implied constant depends only on $(X,H)$.
 \end{lemma}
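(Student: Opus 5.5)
Write $d=H\cdot\operatorname{ch}_{2}(E)$. The plan mirrors Lemma \ref{lemma:rankZeroThirdChernBound}: slide $E$ along its vertical ray to a very wide numerical line, and apply Lemma \ref{lemma:numericalWallBound} there. When that line is not reachable, we peel off codimension-one factors using Lemma \ref{lemma:filtrationExists} and bound them with Lemma \ref{lemma:rankZeroThirdChernBound}.

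\emph{Step 1: the sign of $d$.} Since $\operatorname{ch}_{0}(E)=-1$ and $H^{2}\cdot\operatorname{ch}_{1}(E)=0$, we have $\Pi([E])=(0,-d/H^{3})$. Lemma \ref{lemma:basicFactsTiltStability}(1) gives $\Delta(E)=2H^{3}d\geq 0$, so $d\geq 0$. Every numerical line $\ell$ for $[E]$ passes through $\Pi([E])$. By Lemma \ref{lemma:factsAboutNumericalLines}(1) it therefore satisfies
\[
b_{1}(\ell)b_{2}(\ell)=-2d/H^{3}.
\]
Moreover $b_{1}(\ell)<0<b_{2}(\ell)$.

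\emph{Step 2: choice of line and reachability.} Take $\ell$ through $\Pi([E])$ and $(1/H^{3},w)$ with $w$ large, chosen so that $(b_{2}(\ell)-b_{1}(\ell))^{2}\geq 8t_{0}$. We then need a point of $\ell\cap U$ where $E$ (or its relevant factor) is semistable.

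If no wall for $E$ crosses the vertical ray above $(1/H^{3},w)$, we may move to such a point. Lemma \ref{lemma:numericalWallBound} then gives
\[
\operatorname{ch}_{3}(E)-\frac{b_{1}+b_{2}}{3}d+\frac{b_{1}b_{2}}{6}\cdot 0\leq M\frac{b_{1}+b_{2}}{2}H^{3}.
\]
Here the sign is fixed by $\operatorname{ch}_{0}=-1$. To get the stated bound we want $b_{1}+b_{2}\approx \tfrac{4}{3}\cdot\tfrac{3}{2}\,d\cdot$const. Concretely, we should choose the line optimally: on lines through $\Pi$, minimize $(b_{1}+b_{2})/3\cdot d$ subject to reachability. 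The natural choice comes from Lemma \ref{lemma:filtrationExists}. Its width hypothesis reads $b_{1}H^{3}<(b_{2}-b_{1})H^{3}$, which is automatic since $b_{1}<0$. Walls for $E$ therefore have the structure given there: a rank $-1$ factor $Q_{i_{0}}$ with $H^{2}\cdot\operatorname{ch}_{1}(Q_{i_{0}})\geq -b_{1}H^{3}$, and otherwise torsion factors.

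\emph{Step 3: induction on walls.} Move up the ray $b=1/H^{3}$ from $w$. If $E$ stays semistable, it remains so up to the large-volume limit. In that limit $E$ is (the shift of) a rank one torsion-free sheaf with $\mu=0$, i.e.\ $I_{Z}\otimes L$ with $H^{2}L=0$, shifted. Then $d$ is essentially $-H\cdot\operatorname{ch}_{2}$ of the ideal sheaf, the curve degree, and $\operatorname{ch}_{3}$ is bounded by a Castelnuovo-type estimate. This is a weaker route, so it is better to use Lemma \ref{lemma:numericalWallBound} on the line with $b_{2}=1/H^{3}$, i.e.\ $b_{1}=-2d$. That gives
\[
\operatorname{ch}_{3}(E)\leq \tfrac{1}{3}(b_{1}+b_{2})d+O(1+d)=-\tfrac{2}{3}d^{2}+O(1+d)
\]
in the favourable sign case. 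We then need to check carefully which sign actually appears; the claimed $+\tfrac{2}{3}d^{2}$ suggests the opposite sign convention. If a wall $\ell'$ is hit first, decompose $E$ via the Harder--Narasimhan filtration along $\ell'$ and apply Lemma \ref{lemma:filtrationExists}. The factor $Q_{i_{0}}$ has rank $-1$ and $H^{2}\cdot\operatorname{ch}_{1}(Q_{i_{0}})\leq 0$ by (2). After twisting it has smaller $d$, so the bound holds for it by induction on $d$; note that $d\in\frac{1}{2}\mathbb{Z}_{\geq0}$ is discrete. The torsion factors are bounded by Lemma \ref{lemma:rankZeroThirdChernBound}. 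We then sum, using additivity of $\operatorname{ch}_{3}$.

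\emph{Main obstacle.} The hard part is the bookkeeping in this summation. We must show that the quadratic terms $\frac{(H\cdot\operatorname{ch}_{2})^{2}}{2H^{2}\operatorname{ch}_{1}}+\frac{(H^{2}\operatorname{ch}_{1})^{3}}{24(H^{3})^{2}}$ of the torsion pieces, plus $\tfrac{2}{3}d_{i_{0}}^{2}$, are dominated by $\tfrac{2}{3}d^{2}$ up to $O(1+d)$. This requires a convexity argument exploiting that all factors share the same $\nu$ on $\ell'$ and that their $\operatorname{ch}_{1}$ is controlled by $b_{2}(\ell')-b_{1}(\ell')$. The first thing to try is to parametrize everything by the endpoints of $\ell'$ and reduce to a one-variable inequality, as in \cite{feyzbakhsh2024}*{Lemma 2.6}.
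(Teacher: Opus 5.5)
There is a genuine gap. It comes from a sign error in Step 1 and a missing observation about the ray $b=1/H^{3}$.

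\textbf{The sign error.} With $\operatorname{ch}_{0}(E)=-1$ and $H^{2}\cdot\operatorname{ch}_{1}(E)=0$, Lemma \ref{lemma:factsAboutNumericalLines}(1) reads $2d=b_{1}(\ell)b_{2}(\ell)H^{3}$. So $b_{1}(\ell)b_{2}(\ell)=+2d/H^{3}$, not $-2d/H^{3}$. For lines through $\Pi([E])=(0,-d/H^{3})$ meeting $U_{[E]}$, this gives $0\le b_{1}(\ell)<b_{2}(\ell)$, because $\Pi([E])$ lies below the vertex of the parabola, outside $U$. Consequently three of your claims fail:
\begin{itemize}
\item ``$b_{1}=-2d$'' is false;
\item the width hypothesis of Lemma \ref{lemma:filtrationExists} is not automatic;
\item the worry that the estimate produces $-\tfrac{2}{3}d^{2}$ is unfounded.
\end{itemize}
The inequality you correctly extracted from Lemma \ref{lemma:numericalWallBound},
\[
\operatorname{ch}_{3}(E)\le\tfrac{1}{3}(b_{1}+b_{2})d+\tfrac{1}{2}M(b_{1}+b_{2})H^{3},
\]
already has the right sign. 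Take the line through $\Pi([E])$ and $(1/H^{3},C/H^{3})$. It has slope $C+d$, so $b_{1}+b_{2}=2(C+d)$ and $(b_{2}-b_{1})^{2}=4(C+d)^{2}-8d/H^{3}\ge 4C^{2}$ for $C>1/H^{3}$. Once $C^{2}\ge 2t_{0}$, this yields $\operatorname{ch}_{3}(E)\le\tfrac{2}{3}(C+d)d+M(C+d)H^{3}$, which is the claim.

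\textbf{The missing observation.} To apply this you need $E$ to be semistable at a point of that line. You leave the case where a wall is hit first to an unfinished induction, whose bookkeeping you flag as the main obstacle. The missing idea is that this case is empty. On $\operatorname{Coh}_{1/H^{3}}(X)$ the imaginary part of $Z_{1/H^{3},w'}$ is $H^{2}\cdot\operatorname{ch}_{1}-\operatorname{ch}_{0}\in\mathbb{Z}_{\ge 0}$, and it equals $1$ for $E$. Hence $E$ is $\nu_{1/H^{3},w'}$-semistable if and only if it has no proper nonzero subobject $F$ with $H^{2}\cdot\operatorname{ch}_{1}(F)-\operatorname{ch}_{0}(F)=0$. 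Indeed, such $F$ has $\nu=+\infty$, while any other $F$ has a quotient of slope $+\infty$. This condition does not depend on $w'$. So semistability at one point $(1/H^{3},w)$ gives semistability on the whole vertical ray, in both directions, and there are no walls to cross.

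\textbf{Why the choice of point matters.} You must move to a point $(1/H^{3},C/H^{3})$ with $C$ depending only on $(X,H)$, which may lie below the given $w$. Using the given, possibly large, $w$ gives $b_{1}+b_{2}=2(wH^{3}+d)$, which is not $O(1+d)$ with a uniform constant. The line you propose in Step 3, with an endpoint exactly at $1/H^{3}$, meets the ray only on the boundary of $U$, so Lemma \ref{lemma:factsAboutNumericalLines}(4) cannot be used to reach it. Choosing $C>1/H^{3}$ avoids both problems.

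With this observation the proof is just the computation above; Lemmas \ref{lemma:filtrationExists} and \ref{lemma:rankZeroThirdChernBound} play no role.
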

 \begin{proof}
  Since $E$ is $\nu_{1/H^{3},w}$-semistable, $\Delta(E)\geq 0$.
  Therefore, since $\operatorname{ch}_{0}(E)=-1$ and $H^{2}\cdot\operatorname{ch}_{1}(E)=0$, it follows that $H\cdot\operatorname{ch}_{2}(E)\geq 0$.
  
  Choose $C>0$ satisfying $C>1/H^{3}$ and $C^{2}\geq 2 t_{0}$ where $t_0$ is as in Lemma \ref{lemma:numericalWallBound}.
  Consider the numerical line $\ell$ for $[E]$ passing through $(1/H^{3},C/H^{3})$.
  The denominator of $\nu_{1/H^{3},w}$ is $H^{2}\cdot\operatorname{ch}_{1}-\operatorname{ch}_{0}$.
  Thus, since $H^{2}\cdot\operatorname{ch}_{1}(E)-\operatorname{ch}_{0}(E)=1$ is the smallest positive integer, $E$ is $\nu_{1/H^{3},C/H^{3}}$-semistable.
  By direct computation, $(b_{2}(\ell)-b_{1}(\ell))^{2}\geq 8t_{0}$ so, by Lemma \ref{lemma:numericalWallBound},
  \[
   \operatorname{ch}_{3}(E)\leq\frac{2}{3}(H\cdot\operatorname{ch}_{2}(E)+C)H\cdot\operatorname{ch}_{2}(E)+M(H\cdot\operatorname{ch}_{2}(E)+C)H^{3},
  \]
  as desired.
 \end{proof}
 
\subsection{Replacing BMT estimates}
 The estimates in \cite{feyzbakhsh2023} that rely on the BMT conjecture occur in their Equation (23), Proposition 2.4, and Proposition 2.6.
 We show that these estimates can be replaced using the preceding results and explain the resulting modification of their Proposition 2.5.
 In this subsection, we follow the notation of \cite{feyzbakhsh2023}*{Section 2}.
 
 This subsection is intended to be read alongside \cite{feyzbakhsh2024}*{Section 2}.
 The author plans to give a self-contained exposition of this argument in a future version of this write-up.
 
 Fix $v\in K_{\mathrm{num}}(X)$ with $\operatorname{ch}_{0}(v)=0$ and $H^{2}\cdot\operatorname{ch}_{1}(v)>0$.
 For $n_{0}\gg 0$ set $v_{n_{0}}=v-[\mathcal{O}_{X}(-n_{0}H)]$.
 We also write
 \[
  \frac{1}{H^{3}}\left(H^{3}\operatorname{ch}_{0}(v),H^{2}\cdot\operatorname{ch}_{1}(v),H\cdot\operatorname{ch}_{2}(v),\operatorname{ch}_{3}(v)\right)=(0,c,s_{0},d_{0})
 \]
 and
 \[
  \frac{1}{H^{3}}\left(H^{3}\operatorname{ch}_{0}(v_{n_{0}}),H^{2}\cdot\operatorname{ch}_{1}(v_{n_{0}}),H\cdot\operatorname{ch}_{2}(v_{n_{0}}),\operatorname{ch}_{3}(v_{n_{0}})\right)=(-1,n_{0}+c,s_{0}-\tfrac{1}{2}n_{0}^{2},d_{0}+\tfrac{1}{6}n_{0}^{3}).
 \]
 
\subsubsection{Equation 23}
 Replace the line $\ell_{f}$ of \cite{feyzbakhsh2023}*{Equation (23)} with the numerical line for $v_{n_{0}}$ of slope
 \[
  \lambda_{n_{0}}=\frac{3(d_{0}+\frac{1}{6}n_{0}^{3})+(n_{0}+c)(s_{0}-\frac{1}{2}n_{0}^{2})-3M(n_{0}+c)}{(n_{0}+c)^{2}+2(s_{0}-\frac{1}{2}n_{0}^{2})+3M}.
 \]
 where $M$ is as in Lemma \ref{lemma:numericalWallBound}.
 Direct computation gives
 \begin{gather*}
  \lambda_{n_{0}}=-\frac{n_{0}}{4}+\frac{c^{2}+6s_{0}-9M}{8c}+O(n_{0}^{-1}) \\
  b_{1}(\ell_{f})=-n_{0}+\tfrac{1}{3}c+O(n_{0}^{-1}) \\
  b_{2}(\ell_{f})=\tfrac{1}{2}n_{0}+O(1)
 \end{gather*}
 These identities ensure that \cite{feyzbakhsh2023}*{Lemmas 2.1 and 2.2} remain valid above $\ell_{f}$.

\subsubsection{Proposition 2.4}
 In the proof of \cite{feyzbakhsh2023}*{Proposition 2.4}, Lemma \ref{lemma:rankZeroThirdChernBound} replaces Equation (38) with
 \[
  d-d^{\prime}\leq\frac{(s-s^{\prime})^{2}}{2(c-c^{\prime})}+\frac{(c-c^{\prime})^{3}}{24}+C_{0}(c-c^{\prime}).
 \]
 where $C_{0}>0$ depends only on $(X,H)$.
 
 The width estimate of their Equation (44) gives $b_{2}(\ell)-b_{1}(\ell)\geq\tfrac{3}{2}n_{0}+O(1)$, so  Lemma \ref{lemma:numericalWallBound} replaces the BMT application preceding their Equation (40).
 In turn, their Equation (40) is modified by adding
 \[
  3M(n+c^{\prime})\left(n+c^{\prime}+\frac{s-s^{\prime}}{c-c^{\prime}}\right)
 \]
 to the left side and subtracting
 \[
  3 C_{0}(n+c^{\prime})(c-c^{\prime})
 \]
 from the final right-hand side.
 When $c^{\prime}\leq 0$, these corrections are $O(n_{0}^{2})$ and $O(n_{0})$ respectively.
 Thus, the leading $n_{0}^{3}$ terms of Equations (41) and (42) are unchanged.
 As such, the conclusion of Proposition 2.4 holds.
 
\subsubsection{Proposition 2.5}
 For $n=n_{0}$, $c^{\prime}=0$, $s=s_{0}$, and $d=d_{0}$, the modification of Equation (40) only gives $s^{\prime}=O(1)$ not $s^{\prime}=0$.
 We give a different argument to show $s^{\prime}=0$.
 Equation (35) shows $E_{1}(n_{0})$ is $\nu_{1/H^{3},w}$-semistable for some $(1/H^{3},w)\in U$.
 Therefore, by Lemma \ref{lemma:rankMinusOneThirdChernBound} and the modified Equation (38), we find
 \[
  \frac{\operatorname{ch}_{3}(E_{1}(n_{0}))}{H^{3}}=d^{\prime}+n_{0}s^{\prime}\leq O(1)
 \]
 where $d^{\prime}\geq -O(1)$.
 In particular, $0\leq n_{0} s^{\prime}\leq O(1)$.
 Since $s^{\prime}\in (2 H^{3})^{-1}\mathbb{Z}$, $s^{\prime}=0$ for $n_{0}\gg 0$.
 By \cite{feyzbakhsh2024}*{Proof of Lemma 2.6}, $E_{1}\cong T(-n_{0}H)[1]$ where $T$ is a line bundle with $\operatorname{ch}_{1}(T)=0$, as needed.

\subsubsection{Proposition 2.6}
 Define $\delta_{n}=n_{0}-n$ and choose
 \begin{gather*}
  A>\max\left\{0,|s_{0}|H^{3},\frac{9M-c^{2}-6 s_{0}}{8 c}\right\}, \\
  B\geq\tfrac{1}{2} A^{2}+C_{0}
 \end{gather*}
 Then, $\lambda_{n_{0}}>-\tfrac{1}{4}n_{0}-A$ for $n_{0}\gg 0$.
 In \cite{feyzbakhsh2023}*{Definition 2.3}, use our version of $\ell_{f}$ and replace the upper bound for $s$ and the lower bound for $d$ by
 \begin{gather*}
  s\leq -\frac{3}{4}n_{0}\delta_{n}+(c+A)\delta_{n}+s_{0} \\
  d\geq\left(\frac{15n_{0}^{2}}{32}-n_{0}(A+c)-B\right)\delta_{n}+d_{0}.
 \end{gather*}
 Otherwise, keep the other requirements of that definition the same.
 
 Since $B\geq\tfrac{1}{2}A^{2}+C_{0}$, using our modified Equation (38), their calculation following Equation (48) gives
 \[
  \widetilde{d}\geq d+(\tfrac{15}{32}n_{0}^{2}-n_{0}(A+c)-B)(c-c^{\prime}).
 \]
 Our lower bound for $d$ gives the necessary lower bound for $\widetilde{d}$.
 The remaining estimates are unchanged, so the conclusions of their Proposition 2.6 remain valid.
 
\subsection{Proof of Theorem C}
 We give the reduction of rank zero invariants to rank one invariants in the large volume limit.

 \begin{proposition}[cf. \cite{feyzbakhsh2023}*{Theorem 3.1}]
  \label{proposition:invariantsAtLargeVolume}
  Suppose $(X,H)$ is a polarized Calabi--Yau threefold.
  If $v\in K_{\mathrm{num}}(X)$ with $\operatorname{ch}_{0}(v)=0$ and $H^{2}\cdot\operatorname{ch}_{1}(v)>0$, then $\mathsf{J}_{0,\infty}(v)$ is a polynomial with rational coefficients (depending only on $v$, $(X,H)$, and the graded ring $H^{\bullet}(X,\mathbb{Q})$) in finitely many invariants $\mathsf{J}_{0,\infty}(\beta_{i})$ where $\operatorname{ch}_{0}(\beta_{i})=-1$ and $H^{2}\cdot\operatorname{ch}_{1}(\beta_{i})>0$.
 \end{proposition}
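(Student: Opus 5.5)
We follow the strategy of \cite{feyzbakhsh2023}*{Theorem 3.1}, feeding in the modified estimates of the previous subsection in place of the BMT conjecture. Fix $v$ with $\operatorname{ch}_{0}(v)=0$ and $H^{2}\cdot\operatorname{ch}_{1}(v)>0$, and choose $n_{0}\gg 0$. The idea is to relate $\mathsf{J}_{0,\infty}(v)$ to invariants of the rank $-1$ class $v_{n_{0}}=v-[\mathcal{O}_{X}(-n_{0}H)]$ through wall-crossing along the vertical line $b=b_{0}$, where $b_{0}$ is slightly larger than $b_{1}(\ell_{f})$ for our modified line $\ell_{f}$ of slope $\lambda_{n_{0}}$.

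\emph{Large volume limit.} At $w\gg 0$, $\nu_{b_{0},w}$-semistable objects of class $v_{n_{0}}$ are described as in \cite{feyzbakhsh2023}, so $\mathsf{J}_{b_{0},\infty}(v_{n_{0}})$ counts rank $-1$ objects. These invariants are the $\mathsf{J}_{0,\infty}(\beta)$ with $\operatorname{ch}_{0}(\beta)=-1$, after tensoring by a line bundle and moving along $b$ without crossing walls, which is allowed by Lemma \ref{lemma:wallProperties}(4).

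\emph{Walls above $\ell_{f}$.} Decrease $w$ from $\infty$ to $\ell_{f}$. The asymptotics of $b_{1}(\ell_{f})$ and $b_{2}(\ell_{f})$ recorded above give the width hypothesis of Lemma \ref{lemma:filtrationExists}. Hence at each wall, the Harder--Narasimhan or Jordan--H\"older factors consist of exactly one rank $-1$ factor and several pure codimension-one sheaves. Those sheaves do not destabilize further up the vertical ray, by Lemma \ref{lemma:filtrationExists}(4), so their invariants equal the large-volume invariants $\mathsf{J}_{0,\infty}$ of rank zero classes. Feyzbakhsh--Thomas's Joyce--Song type wall-crossing formula then expresses $\mathsf{J}_{b_{0},w}(v_{n_{0}})$ just above $\ell_{f}$ in terms of two kinds of invariants. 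The first kind is the rank $-1$ invariants. The second kind is the rank zero invariants $\mathsf{J}_{0,\infty}(\alpha)$ with $H^{2}\cdot\operatorname{ch}_{1}(\alpha)\leq H^{2}\cdot\operatorname{ch}_{1}(v)$, where equality occurs only in the single term $\alpha=v$. The coefficient of $\mathsf{J}_{0,\infty}(v)$ is an explicit nonzero $\chi$-factor, namely $\pm\chi(\mathcal{O}_{X}(n_{0}H))$ up to sign. This uses the modified Propositions 2.4--2.6 of the previous subsection. In particular, the rank $-1$ factor in the extremal case is $T(-n_{0}H)[1]$, which is exactly the modified Proposition 2.5.

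\emph{Vanishing and induction.} By the modified Proposition 2.6 and Lemma \ref{lemma:numericalWallBound}, no $\nu$-semistable object of class $v_{n_{0}}$ exists on or just above $\ell_{f}$, so $\mathsf{J}_{b_{0},w}(v_{n_{0}})=0$ there. Solving the identity for $\mathsf{J}_{0,\infty}(v)$ writes it as a polynomial in rank $-1$ invariants and in $\mathsf{J}_{0,\infty}(\alpha)$ with strictly smaller $H^{2}\cdot\operatorname{ch}_{1}$. Induction on $H^{2}\cdot\operatorname{ch}_{1}(v)\in\mathbb{Z}_{>0}$ then finishes the proof. The base case has no smaller classes, and only finitely many classes $\alpha$ appear, by boundedness and Lemma \ref{lemma:rankZeroThirdChernBound}.

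The main obstacle is the middle step: ensuring that every wall between $w=\infty$ and $\ell_{f}$ has the claimed shape, and that the contribution of $v$ itself occurs exactly once with a nonzero coefficient. Our $\ell_{f}$ differs from the one in \cite{feyzbakhsh2023}, and $s^{\prime}$ is controlled only through the modified Proposition 2.5. So I would first carefully recheck the numerical inequalities of Lemma \ref{lemma:filtrationExists} and Proposition 2.4 with the $O(n_{0}^{2})$ correction terms, and confirm that they are dominated by the leading $n_{0}^{3}$ terms.
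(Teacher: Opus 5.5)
Your outline follows the paper's route: run the proof of \cite{feyzbakhsh2023}*{Theorem 3.1} with the modified Propositions 2.4--2.6, starting from an empty chamber at $\ell_f$ and wall-crossing up to the large volume limit. The one real error is that you put the empty chamber on the wrong side of $\ell_f$. The slope $\lambda_{n_0}$ is exactly the value at which the inequality of Lemma~\ref{lemma:numericalWallBound} for $v_{n_0}$ is an equality. For a numerical line of slope $\lambda=\tfrac12(b_1+b_2)$, substitute $b_1b_2=2(s_0-\tfrac12 n_0^2)-2\lambda(n_0+c)$ from Lemma~\ref{lemma:factsAboutNumericalLines}(1). That inequality then becomes
\[
 \lambda\bigl((n_0+c)^2+2(s_0-\tfrac12 n_0^2)+3M\bigr)\geq 3(d_0+\tfrac16 n_0^3)+(n_0+c)(s_0-\tfrac12 n_0^2)-3M(n_0+c),
\]
that is, $\lambda\geq\lambda_{n_0}$. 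So the lemma excludes semistable objects of class $v_{n_0}$ only on sufficiently wide lines strictly below $\ell_f$, and gives no obstruction on or above $\ell_f$. Proposition 2.6 plays no role in this vanishing. The paper's starting chamber is at $(0,w)$ just below $\ell_f$, where the numerical line satisfies $(b_2-b_1)^2=4\nu_{0,w}(v_{n_0})^2+8w\geq\tfrac43 n_0^2\geq 8t_0$. Your claim that nothing is semistable on or just above $\ell_f$ holds only if $\ell_f$ is not itself a wall, and nothing rules that out. In general $\ell_f$ must be counted among the walls you cross.

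There are also three smaller points.

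First, at each wall the Joyce--Song formula uses the invariant of the rank $-1$ factor at the wall point, not its large-volume invariant. Reducing these to the $\mathsf{J}_{0,\infty}(\beta)$ is the recursive step of \cite{feyzbakhsh2023}*{Lemma 2.8, Proposition 3.2}, which takes the modified Definition 2.3 and Proposition 2.6 as input. The paper notes this step goes through unchanged, but your sketch never invokes it.

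Second, the $\chi$-factor at the $T(-n_0H)[1]$ wall is, up to sign, $\chi(\mathcal{O}_X(-n_0H),v)$, not $\chi(\mathcal{O}_X(n_0H))$. It grows like $\tfrac12 n_0^2\,H^2\cdot\operatorname{ch}_1(v)$, so it is still nonzero for $n_0\gg 0$.

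Third, the detour through $b_0$ is unnecessary, since the paper works at $b=0$ directly. If you keep it, the identification $\mathsf{J}_{b_0,\infty}=\mathsf{J}_{0,\infty}$ comes from Lemma~\ref{lemma:factsAboutNumericalLines}(4): every relevant wall on or above $\ell_f$ is a numerical line whose interval $(b_1,b_2)$ contains both $b_0$ and $0$. Lemma~\ref{lemma:wallProperties}(4) only gives finiteness along a single vertical ray, and tensoring by a line bundle plays no role.
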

 \begin{proof}
  Choose $n_{0}\gg 0$ so the preceding results hold and apply the proof of \cite{feyzbakhsh2023}*{Theorem 3.1} at $b=0$.
  The modifications of Propositions 2.4, 2.5, and 2.6 recover the wall description of \cite{feyzbakhsh2023}*{Section 2.1}.
  The argument of \cite{feyzbakhsh2023}*{Lemma 2.8} is unchanged, so their Proposition 3.2 still applies with our modifications.
  
  For $(0,w)$ just below $\ell_{f}$, the numerical line for $v_{n_{0}}$ through $(0,w)$ satisfies
  \[
   (b_{2}(\ell)-b_{1}(\ell))^{2}=4\nu_{0,w}(v_{n_{0}})^{2}+8w\geq\tfrac{4}{3}n_{0}^{2}.
  \]
  Thus, for $n_{0}\gg 0$, Lemma \ref{lemma:numericalWallBound} and our definition of $\ell_{f}$ imply $\mathsf{J}_{0,w}(v_{n_{0}})=0$.
  This supplies the starting chamber of \cite{feyzbakhsh2023}*{Equation (62)}.
  The remainder of the wall-crossing argument in \cite{feyzbakhsh2023}*{Section 3} now applies unchanged.
 \end{proof}

 Comparing $\mathsf{J}_{0,\infty}(v)$ with $\mathsf{J}(v)$ gives rank one reduction.
 
 \begin{theorem}
  \label{theorem:rankOneReduction}
  Suppose $(X,H)$ is a polarized Calabi--Yau threefold.
  For all $v\in K_{\mathrm{num}}(X)$ with $\operatorname{ch}_{0}(v)\geq 0$ there exist classes $\beta_{1},\beta_{2},\ldots,\beta_{m}\in K_{\mathrm{num}}(X)$ with $\operatorname{ch}_{0}(\beta_{i})=1$ and a polynomial $P_{v}\in\mathbb{Q}[t_{1},t_{2},\ldots,t_{m}]$ (depending on $v$, $(X,H)$, and the graded ring $H^{\bullet}(X,\mathbb{Q})$) such that
  \[
   \mathsf{J}(v)=P_{v}\left(\mathsf{J}(\beta_{1}),\mathsf{J}(\beta_{2}),\ldots,\mathsf{J}(\beta_{m})\right).
  \]
 \end{theorem}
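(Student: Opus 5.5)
The plan is to combine Corollary \ref{corollary:rankZeroReduction} with Proposition \ref{proposition:invariantsAtLargeVolume}, following the proof of \cite{feyzbakhsh2023}*{Theorem 1}. We split according to $\operatorname{ch}_{0}(v)$.

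\emph{Case $\operatorname{ch}_{0}(v)\geq 1$.} Corollary \ref{corollary:rankZeroReduction} writes $\mathsf{J}(v)$ as a polynomial in finitely many $\mathsf{J}(\alpha_{i})$ with $\operatorname{ch}_{0}(\alpha_{i})=0$ and $H^{2}\cdot\operatorname{ch}_{1}(\alpha_{i})>0$. This reduces the claim to the rank zero case, since a polynomial in polynomials is again a polynomial.

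\emph{Case $\operatorname{ch}_{0}(v)=0$.}
\begin{enumerate}
 \item If $H^{2}\cdot\operatorname{ch}_{1}(v)=0$, then $v$ is the class of sheaves supported in dimension $\leq 1$. The intended statement (as in \cite{feyzbakhsh2023}) concerns classes with $H^{2}\cdot\operatorname{ch}_{1}(v)>0$ in rank zero. For the remaining classes I would either appeal to Toda's rank one reduction for one-dimensional sheaves via Pandharipande--Thomas / rank one DT invariants, or simply note that the statement is applied only to the classes arising in the previous case, which satisfy $H^{2}\cdot\operatorname{ch}_{1}>0$. I would check which reading the paper intends; I expect the latter.
 \item Assume $H^{2}\cdot\operatorname{ch}_{1}(v)>0$. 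First identify $\mathsf{J}(v)$ with $\mathsf{J}_{0,\infty}(v)$. For a rank zero class with positive degree, $\nu_{b,w}$-semistability for $w\gg0$ coincides with a form of Gieseker-type stability of pure codimension-one sheaves. As in \cite{feyzbakhsh2024}*{Section 4} and \cite{feyzbakhsh2023}*{Section 4}, the large volume limit $\mathsf{J}_{b,\infty}(v)$ agrees with the Joyce--Song invariant $\mathsf{J}(v)$ after the standard comparison. Independence of $b$ can be arranged by twisting with line bundles, which preserves $\mathsf{J}$ on a Calabi--Yau threefold.
 \item Proposition \ref{proposition:invariantsAtLargeVolume} then expresses $\mathsf{J}_{0,\infty}(v)$ polynomially in $\mathsf{J}_{0,\infty}(\beta_{i})$ with $\operatorname{ch}_{0}(\beta_{i})=-1$ and $H^{2}\cdot\operatorname{ch}_{1}(\beta_{i})>0$.
 \item For such $\beta_{i}$, large volume $\nu_{0,w}$-semistable objects are of the form $E=I^{\vee}$ up to shift: derived duals of rank one torsion-free sheaves, or equivalently stable pairs. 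This is \cite{feyzbakhsh2023}*{Section 4}, which uses only the large-volume description and no BMT input.
 \item Using derived duality $E\mapsto E^{\vee}[1]$ and $K_{X}\cong\mathcal{O}_{X}$, identify $\mathsf{J}_{0,\infty}(\beta_{i})$ with $\mathsf{J}(\beta_{i}^{\prime})$ for a class $\beta_{i}^{\prime}$ with $\operatorname{ch}_{0}(\beta_{i}^{\prime})=1$. Here $\beta_{i}^{\prime}$ is essentially $-\beta_{i}^{\vee}$, possibly after tensoring with a line bundle to normalize. Rank one Gieseker-semistable sheaves are ideal sheaves twisted by line bundles, and $H^{1}(\mathcal{O}_{X})=0$ makes $\operatorname{Pic}$ discrete.
\end{enumerate}

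The main obstacle is steps 2 and 5: matching the large volume tilt invariants with Joyce--Song invariants in both rank $0$ and rank $\pm 1$. I would import these comparisons verbatim from \cite{feyzbakhsh2023}*{Section 4} and \cite{feyzbakhsh2024}*{Section 4}, after checking that they rely only on the large volume description (Lemma \ref{lemma:wallProperties}(4)) and not on the BMT conjecture. The dependence of the polynomial only on $v$, $(X,H)$ and $H^{\bullet}(X,\mathbb{Q})$ comes from the wall-crossing formula, whose coefficients involve only the Euler pairing.
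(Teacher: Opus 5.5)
Your skeleton matches the paper's: rank $\geq 1$ to rank $0$ by Corollary~\ref{corollary:rankZeroReduction}, rank $0$ to rank $-1$ by Proposition~\ref{proposition:invariantsAtLargeVolume}, then rank $-1$ to rank $1$ at large volume. However, the two comparison steps you flagged as the main obstacle are false as you state them.

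In step 2 you identify $\mathsf{J}(v)$ with $\mathsf{J}_{0,\infty}(v)$. For $\operatorname{ch}_{0}(v)=0$ the tilt slope $\nu_{b,w}(v)=H\cdot\operatorname{ch}_{2}(v)/H^{2}\cdot\operatorname{ch}_{1}(v)$ does not see $\operatorname{ch}_{3}$. So large volume semistability of a pure codimension-one sheaf is only a slope condition, weaker than $H$-Gieseker semistability, and in general $\mathsf{J}_{0,\infty}(v)\neq\mathsf{J}(v)$. What holds is the wall-crossing relation \cite{feyzbakhsh2023}*{Equation (66)}: $\mathsf{J}(v)=\mathsf{J}_{0,\infty}(v)+\sum_{i}c_{i}\prod_{j}\mathsf{J}_{0,\infty}(\alpha_{i,j})$, with finitely many nonzero $c_{i}$ and $\operatorname{ch}_{0}(\alpha_{i,j})=0$, $H^{2}\cdot\operatorname{ch}_{1}(\alpha_{i,j})>0$. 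The paper uses this and then runs the whole rank-zero reduction again on each $\alpha_{i,j}$. Without it, your argument computes $\mathsf{J}_{0,\infty}(v)$, not $\mathsf{J}(v)$.

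Steps 4--5 have the same problem. Large volume semistable objects of rank $-1$ are not ``equivalently'' duals of rank one torsion-free sheaves and stable pairs. Zero-dimensional sheaves have $\nu=+\infty$, so semistability forces $\operatorname{Hom}(\mathcal{O}_{x},E)=0$. This selects (twisted, shifted) stable pairs and excludes ideal sheaves of curves with embedded or isolated points. Hence $\mathsf{J}_{0,\infty}(\beta_{i})$ is a stable pair count, via \cite{feyzbakhsh2023}*{Appendix A} rather than Section 4. It is not a single rank one invariant $\mathsf{J}(\beta_{i}^{\prime})$; for instance, in curve class zero there are no nontrivial stable pairs but there are ideal sheaves of points. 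You need the DT/PT correspondence \cite{bridgeland2011}*{Theorem 1.1} to write $\mathsf{J}_{0,\infty}(\beta_{i})$ as a polynomial in rank one Joyce--Song invariants. That is where the answer becomes a polynomial rather than a direct identification.

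Your case (1) also settles the scope the wrong way. The theorem is asserted for every $v$ with $\operatorname{ch}_{0}(v)\geq 0$, so rank zero classes with $H^{2}\cdot\operatorname{ch}_{1}(v)=0$ are part of the claim and cannot be set aside. The paper handles $H\cdot\operatorname{ch}_{2}(v)>0$ by \cite{feyzbakhsh2023}*{Section 4}, which does not use BMT; this is the one-dimensional-sheaf reduction your first option alludes to. It handles $H\cdot\operatorname{ch}_{2}(v)=0$, the class of zero-dimensional sheaves, by \cite{joyce2012}*{Equation (6.19)}, which makes $\mathsf{J}(v)$ a rational multiple of $\chi_{\mathrm{top}}(X)$, a constant depending only on $H^{\bullet}(X,\mathbb{Q})$. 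Your proposal omits the zero-dimensional case entirely. Classes carried by no coherent sheaf, such as $\operatorname{ch}_{0}(v)=0$ with $H^{2}\cdot\operatorname{ch}_{1}(v)<0$, are trivial since then $\mathsf{J}(v)=0$.
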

 \begin{proof}
  If no coherent sheaf has numerical class $v$, then $\mathsf{J}(v)=0$ and the claim is immediate.
  By Corollary \ref{corollary:rankZeroReduction}, we may assume $\operatorname{ch}_{0}(v)=0$.
  
  If $H^{2}\cdot\operatorname{ch}_{1}(v)=0$ then every sheaf of numerical class $v$ is supported in dimension at most one.
  The case $H\cdot\operatorname{ch}_{2}(v)>0$ is treated in \cite{feyzbakhsh2023}*{Section 4} (which does not assume the BMT conjecture).
  If $H\cdot\operatorname{ch}_{2}(v)=0$ then every sheaf of numerical class $v$ has zero dimensional support.
  In this case, \cite{joyce2012}*{Equation (6.19)} expresses $\mathsf{J}(v)$ as a rational multiple of $\chi_{\mathrm{top}}(X)$, as needed.
  
  Thus, we may assume $H^{2}\cdot\operatorname{ch}_{1}(v)>0$.
  By Proposition \ref{proposition:invariantsAtLargeVolume}, we can write $\mathsf{J}_{0,\infty}(v)$ as a rational polynomial in finitely many invariants $\mathsf{J}_{0,\infty}(\beta_{i})$ with $\operatorname{ch}_{0}(\beta_{i})=-1$ and $H^{2}\cdot\operatorname{ch}_{1}(\beta_{i})>0$.
  By \cite{feyzbakhsh2023}*{Appendix A} and \cite{bridgeland2011}*{Theorem 1.1}, each $\mathsf{J}_{0,\infty}(\beta_{i})$ is a rational polynomial in finitely many DT invariants $\mathsf{J}(\alpha_{i})$ with $\operatorname{ch}_{0}(\alpha_{i})=1$.
  Hence, $\mathsf{J}_{0,\infty}(v)$ is also a rational polynomial in finitely many rank-one DT invariants.
  Further, by \cite{feyzbakhsh2023}*{Equation (66)}, we have
  \[
   \mathsf{J}(v)=\mathsf{J}_{0,\infty}(v)+\sum_{i} c_{i}\prod_{j}\mathsf{J}_{0,\infty}(\alpha_{i,j})
  \]
  where only finitely many $c_{i}$ are nonzero and $\alpha_{i,j}\in K_{\mathrm{num}}(X)$ satisfy $\operatorname{ch}_{0}(\alpha_{i,j})=0$ and $H^{2}\cdot\operatorname{ch}_{1}(\alpha_{i,j})>0$.
  By the same argument, each of the finitely many invariants $\mathsf{J}_{0,\infty}(\alpha_{i,j})$ can be expressed in terms of rank one DT invariants, as desired.
 \end{proof}

\end{document}